\title{Gaussian deconvolution and the lace expansion}
\author{
Yucheng Liu\,\orcidlink{0000-0002-1917-8330}\thanks{Department of Mathematics,
	University of British Columbia,
	Vancouver, BC, Canada V6T 1Z2.
	Liu: \href{mailto:yliu135@math.ubc.ca}{yliu135@math.ubc.ca}.
	Slade: \href{mailto:slade@math.ubc.ca}{slade@math.ubc.ca}.
	}
\and
Gordon Slade\,\orcidlink{0000-0001-9389-9497}$^*$
}
\date{July 26, 2024}  %for non-arXiv
\theoremstyle{plain}
\newtheorem{theorem}{Theorem}[section]
\newtheorem{lemma}[theorem]{Lemma}
\newtheorem{proposition}[theorem]{Proposition}
\newtheorem{corollary}[theorem]{Corollary}
\newtheorem{definition}[theorem]{Definition}
\newtheorem{assumption}[theorem]{Assumption}
\newtheorem{remark}[theorem]{Remark}
\numberwithin{equation}{section}
\newcommand{\ie}{i.e.}
\newcommand{\eg}{e.g.}
\newcommand{\eps}{\varepsilon}
\newcommand{\Z}{\mathbb{Z}}
\newcommand{\R}{\mathbb{R}}
\newcommand{\C}{\mathbb{C}}
\newcommand{\T}{\mathbb{T}}
\newcommand{\Fcal}{\mathcal{F}}
\newcommand{\grad}{\nabla}
\newcommand{\inv}{^{-1}}
\renewcommand{\(}{\left(}
\renewcommand{\)}{\right)}
\newcommand{\half}{\frac{1}{2}}
\newcommand{\1}{\mathds{1}}
\newcommand{\nl}{\nonumber \\}
\providecommand{\abs}[1]{\lvert#1\rvert}
\providecommand{\norm}[1]{\lVert#1\rVert}
\providecommand{\Bigabs}[1]{\Big\lvert#1\Big\rvert}
\providecommand{\Bignorm}[1]{\Big\lVert#1\Big\rVert}
\providecommand{\bignorm}[1]{\big\lVert#1\big\rVert}
\newcommand{\uvec}{\tilde{u}}
\newcommand{\veee}[1]{|\!|\!|#1|\!|\!|}
\newcommand{\xvee}{\veee{x}}
\providecommand{\nnnorm}[1]{\veee #1}
\providecommand{\floor}[1]{\lfloor #1 \rfloor}
\newcommand{\Dnn}{D}
\newcommand{\Fpp}{\kappa_0}
\newcommand{\crit}{_{z_c}}
\begin{document}
\maketitle

\begin{abstract}
We give conditions on a real-valued function $F$ on $\Z^d$, for $d>2$, which ensure that
the solution $G$ to the convolution equation $(F*G)(x) = \delta_{0,x}$ has Gaussian
decay $|x|^{-(d-2)}$ for large $|x|$.
Precursors of our results were obtained in the 2000s, using intricate Fourier analysis.
In 2022, a very simple deconvolution theorem was proved, but its applicability was
limited.   We extend the 2022 theorem to remove its limitations
while maintaining its simplicity---our main tools are
H\"older's inequality, weak derivatives, and basic Fourier theory in $L^p$ space.
Our motivation comes from critical phenomena
in equilibrium statistical mechanics, where the convolution equation is provided
by the lace expansion and
$G$ is a critical two-point function.
Our results significantly
simplify existing proofs of critical $|x|^{-(d-2)}$
decay in high dimensions for self-avoiding walk, Ising and $\varphi^4$
models, percolation, and
lattice trees and lattice animals.
We also improve previous error estimates.
\end{abstract}

%\noindent
%{\bf MSC Classification:} 42B05, 60K35, 82B27, 82B41, 82B43.
%
%\smallskip\noindent
%{\bf Keywords:} convolution, Fourier transform, weak derivative, random walk, lace expansion,
%spin system, self-avoiding walk, percolation, lattice trees, lattice animals.

%\setcounter{tocdepth}{3}
%\tableofcontents

\centerline{
\it Dedicated with appreciation to Geoffrey Grimmett
}

\section{Gaussian deconvolution}
\label{sec:intro}
\subsection{Introduction}

Since its introduction by Brydges and Spencer in 1984, the lace expansion
has been extended and applied to prove detailed results concerning mean-field
critical behaviour of statistical mechanical models on $\Z^d$ above their upper critical
dimensions.  These results include self-avoiding walk and spin systems
(Ising and $\varphi^4$) in dimensions $d>4$ (e.g., \cite{BS85,BHH21,HS92a,Saka07,Saka15}),
percolation for $d>6$ \cite{HS90a,HH17book,FH17}, and lattice trees and lattice animals (LTLA)
for $d>8$ \cite{HS90b,FH21}.
An aspect of this work that is
central to many applications
is a proof that the critical two-point
function for each of the mentioned models has Gaussian decay $|x|^{-(d-2)}$ at large distance.
A proof of this $x$-space decay was not discovered until suitable deconvolution
theorems were proved in the 2000s \cite{Hara08,HHS03}.
This followed two decades of work
in which the Fourier transform of the critical two-point function was
proved to behave as $|k|^{-2}$ for small frequencies $k$.  Knowledge of $|k|^{-2}$
behaviour is sufficient for much progress, including proving the triangle condition \cite{AN84}
for percolation.  However,
\cite[Example~1.6.2]{MS93} shows that
it does not imply $|x|^{-(d-2)}$ decay.  The $|x|^{-(d-2)}$ decay has been
employed, e.g., in percolation theory to construct and analyse the incipient infinite
cluster \cite{HJ04,KN09},
to study arm exponents \cite{CHS23,KN11}, and to study finite-size scaling \cite{Aize97,CH20,HMS23}
where in particular the near-critical decay $|x|^{-(d-2)}e^{-c(p_c-p)^{1/2}|x|}$
of the two-point function has been
established and exploited in \cite{HMS23}.

In 2018, a simpler proof of an $|x|^{-(d-2)}$ upper bound for the weakly self-avoiding
walk in dimensions $d>4$ was presented in \cite{BHK18}, using Banach algebras
and convolution estimates and without using Fourier transforms.  A
different and strikingly simple
proof of $|x|^{-(d-2)}$ decay for weakly self-avoiding
walk in dimensions $d>4$ appeared very recently in \cite{Slad22_lace},
where an asymptotic formula was obtained by using the Fourier transform
in an elementary way
(inspired by Kotani's Theorem \cite{Slad20_Kotani}).
However, in \cite{Slad22_lace} obstacles were identified
to extending the proof to other models including percolation and
LTLA.
In this paper, we remove the obstacles while maintaining the simplicity
of the approach of \cite{Slad22_lace}, and we extend it
so that its realm of applications includes all models for which $|x|^{-(d-2)}$ decay has been proved
via the lace expansion, namely self-avoiding walk,  Ising and $\varphi^4$ spin models,
percolation, LTLA.

This extension provides a new proof of Hara's Gaussian Lemma \cite{Hara08}.
Our method involves an elementary but judicious application of
H\"older's inequality in the context of Fourier analysis for weak derivatives in $L^p$ spaces
(we recall the necessary theory in Appendix~\ref{appendix:weak}).
We also obtain improved error bounds compared to \cite{Hara08},
using a fractional derivative analysis.

More generally, our work provides conditions on a function
$F$ on $\Z^d$ so that the solution $G$ to the convolution equation
$(F*G)(x)=\delta_{0,x}$
has $|x|^{-(d-2)}$ decay at large distance.
In applications, the verification of the assumed
conditions uses the model-dependent diagrammatic estimates common in lace expansion analysis;
we do not discuss that further as the details can be found in \cite{Hara08,HHS03,Saka07,BHH21}
and we have nothing new to say in that regard.

In \cite{Slad22_lace}, it was also noted that further work would be needed in order to apply the method to spread-out models, which are used to analyse dimensions only slightly above the upper critical dimension.
In a companion paper \cite{LS24b}, we adapt our
deconvolution theorem to spread-out models
and use it to significantly simplify the proof, compared
to \cite{HHS03}, of
$|x|^{-(d-2)}$ decay of critical two-point functions for
spread-out versions of self-avoiding walk, Ising model, percolation, and LTLA above their upper critical dimensions.

\medskip\noindent
{\bf Notation:}  We write
$a \vee b = \max \{ a , b \}$ and $a \wedge b = \min \{ a , b \}$.
We write
$f = O(g)$ or $f \lesssim g$ to mean there exists a constant $C> 0$ such that $\abs {f(x)} \le C\abs {g(x)}$, and $f= o(g)$ for $\lim f/g = 0$.
To avoid dividing by zero, with $|x|$ the Euclidean norm of $x\in \R^d$, we define
\begin{equation}
\label{eq:xvee1}
	\xvee = \max\{|x| , 1\}.
\end{equation}
Note that \eqref{eq:xvee1} does not define a norm on $\R^d$.

\subsection{Gaussian deconvolution theorem}

The convolution of two absolutely summable functions $f,g:\Z^d \to \R$ is
the function $f*g:\Z^d \to \R$ defined by
\begin{equation}
    (f*g)(x) = \sum_{y\in\Z^d} f(y)g(x-y).
\end{equation}
Let $d>2$,
let $\Dnn: \Z^d \to [0,1)$ be
defined by $\Dnn(x) = \frac{1}{2d} \1_{|x|=1}$,
and let
$\mu \in (0,1]$.
The lattice Green function $C_\mu : \Z^d \to \R$
is the solution, vanishing
at infinity, of
\begin{equation}
\label{eq:LGF}
    (\delta -\mu \Dnn)*C_\mu = \delta ,
\end{equation}
where $\delta$ is the Kronecker delta $\delta(x)=\delta_{0,x}$.
In particular,
the case $\mu=1$ involves $\delta-\Dnn$, which is minus the
discrete Laplacian, and $C_1(x)$ is the expected number of visits to $x$ by a
simple random walk started from the origin.
The solution to \eqref{eq:LGF} can be obtained using the (inverse) Fourier transform
\begin{equation}
\hat f(k)  = \sum_{x\in\Z^d}f(x) e^{ik\cdot x} 	\quad (k \in \T^d),
\qquad
f(x) = \int_{\T^d} \hat f(k) e^{-ik\cdot x}  \frac{ dk }{ (2\pi)^d } \quad (x\in \Z^d),
\end{equation}
where $\T^d=(\R/2\pi\Z)^d$ is the continuum torus.
We identify $\T^d$ with $(-\pi,\pi]^d \subset \R^d$.
In particular, in dimensions $d>2$,
$C_\mu$ is given by the absolutely convergent integral
\begin{equation}
\label{eq:Cmux}
    C_\mu(x) =
    \int_{\T^d} \frac{ e^{-ik\cdot x} }{1 - \mu\hat \Dnn(k)} \frac{ dk }{ (2\pi)^d },
    \qquad
    \hat \Dnn(k) = d^{-1}\sum_{j=1}^d \cos k_j.
\end{equation}
When $\mu\in (0,1)$, $C_\mu$ decays exponentially (detailed asymptotics
are given in \cite{MS22}), whereas for $\mu=1$
the large-$x$ decay of $C_1(x)$ is well-known
(e.g., \cite{LL10})
to be given by
\begin{equation}
\label{eq:C1_asymp}
    C_1(x) =
    \frac { a_d }{\xvee^{d-2} }
    + O\(\frac 1 {\xvee^d }\) ,
    \qquad a_d = \frac{ d \Gamma(\frac{ d-2 } 2 ) }{ 2\pi^{d/2}}.
\end{equation}
We do not give a new proof of \eqref{eq:C1_asymp}.
Although $C_1$ is not summable,
it is nevertheless a standard fact that
\eqref{eq:Cmux} does give a solution to \eqref{eq:LGF} when $\mu=1$.

Motivated by applications to statistical mechanics based on the lace expansion,
given a function $F:\Z^d \to \R$ we wish to understand
the large-$x$ decay of the solution $G$ to the equation
\begin{equation}
\label{eq:FG}
    F*G =\delta,
\end{equation}
given by the Fourier integral
\begin{equation} \label{eq:Gint}
    G(x)= \int_{\T^d}\frac{e^{-ik\cdot x}}{\hat F(k)} \frac{dk}{(2\pi)^d}.
\end{equation}
We are interested in functions $F$ satisfying the following assumptions,
which in particular imply the absolute convergence of the above integral.

\begin{assumption}
\label{ass:F}
We assume that $F$ is a $\Z^d$-symmetric function (invariant under
reflection in coordinate hyperplanes
and rotation by $\pi/2$) for which
there are
$K_1, K_2 > 0$ and $\rho > \frac{d-8}{2} \vee 0$ such that, for all $x \in \Z^d$ and $k \in \T^d$,
\begin{align}
\label{eq:Fass}
\abs{ F(x) } &\le \frac{ K_1 }{ \xvee^{d+2+\rho}},
	\qquad 	\hat F(0) \ge 0, 	
	\qquad 	\hat F(k)-\hat F(0) \ge K_2 \abs k^2  .
\end{align}
\end{assumption}

In practice, the verification of Assumption~\ref{ass:F} requires a
small parameter, which arises from a weak interaction or a spread-out model or
a high-dimension condition.  Once we assume \eqref{eq:Fass}, there is no
small parameter appearing explicitly in our analysis.
In \cite{LS24b}, we indicate the role of the small parameter in detail, for
spread-out models.

By Taylor's Theorem and Assumption~\ref{ass:F},
\begin{equation}
\label{eq:K0def}
    \Fpp = -\sum_{x\in \Z^d} |x|^2F(x) \in
    [2dK_2,\infty),  
\end{equation}
where the equality defines the constant $\Fpp$.
Assumption~\ref{ass:F} is satisfied by
$F=\delta -\mu \Dnn$ for $\mu\in (0,1]$ for any $\rho>0$.
In particular, the \emph{infrared bound} (lower bound on $\hat F(k)-\hat F(0)$) in this case
is simply
\begin{align}
\label{eq:IR-SRW}
\mu \hat \Dnn(0) -  \mu\hat \Dnn(k)
= \frac \mu d \sum_{j=1}^d (1 - \cos k_j) \ge \frac {2\mu} {\pi^2 d}\abs k^2 \qquad (k\in \T^d).
\end{align}
We do not assume that $F(x) \le 0$ for all $x \neq 0$, unlike
analogues of our results such as \cite{Uchi98}.
This is important for applications to the lace expansion, where positive
values of $F$ do occur.

Let
\begin{align}
\label{eq:lambda_z}
\lambda &= \frac{1} { \hat F(0) - \sum_{x\in\Z^d} \abs x^2 F(x) }
	= \frac 1 { \hat F(0) + \Fpp},
\qquad \mu = 1 - \lambda \hat F(0).
\end{align}
By hypothesis, $\lambda >0$, so $\mu \le 1$.  Also, $\lambda\hat F(0) < 1$
by definition of $\lambda$, so in
fact $\mu\in (0,1]$.
For the \emph{critical} case of $\hat F (0)=0$, we have $\mu =1$ and
$\lambda = \Fpp \inv$.

The following is our general Gaussian deconvolution theorem.
Its proof is given in Section~\ref{sec:pf-mr}.

\begin{theorem}[Gaussian deconvolution]
\label{thm:gaussian_lemma}
Let $d >2$, and let
$F$ satisfy
Assumption~\ref{ass:F}.
Then
the solution $G$ to \eqref{eq:FG},
given by the Fourier integral \eqref{eq:Gint},
satisfies
\begin{align}
\label{eq:G_asymp}
    G(x) =
    \lambda C_{\mu}(x)
    +
     O\(\frac 1 {\xvee^{d-2+s}}\)
\end{align}
for any power $s$ which obeys
\begin{alignat}{2}
    &s < \rho \wedge 2   \qquad   && (2<d\le 8), \label{eq:s_small}
    \\
    &s < (\rho - \tfrac{d-8}{2}) \wedge 2   \qquad   && (d>8). \label{eq:s_large}
\end{alignat}
The error estimate in \eqref{eq:G_asymp} depends only on $d,K_1,K_2,\rho,s$.
\end{theorem}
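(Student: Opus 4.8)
The core idea is to compare $G$ to $\lambda C_\mu$ directly in Fourier space, then transfer the comparison back to $x$-space by controlling the decay of the Fourier transform of the difference. Observe that with $\lambda$ and $\mu$ as in \eqref{eq:lambda_z}, the function $\lambda^{-1}(1-\mu\hat\Dnn(k))$ agrees with $\hat F(k)$ to second order at $k=0$: indeed $\hat F(0)-\hat F(0)=0$ trivially, and the choice of $\lambda$ is precisely engineered so that the quadratic coefficients match, using \eqref{eq:K0def} and $\hat\Dnn(k) = 1 - \frac{1}{2d}|k|^2 + O(|k|^4)$. So I would write
\begin{equation}
\widehat{G}(k) - \lambda \widehat{C_\mu}(k)
= \frac{1}{\hat F(k)} - \frac{\lambda}{1-\mu\hat\Dnn(k)}
= \frac{\lambda^{-1}(1-\mu\hat\Dnn(k)) - \hat F(k)}{\lambda^{-1}\hat F(k)\,(1-\mu\hat\Dnn(k))}
=: \widehat{E}(k),
\end{equation}
and the task becomes: show that $E(x) = O(\xvee^{-(d-2+s)})$ for $s$ in the stated range. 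By \eqref{eq:Fass} the denominator is bounded below by a constant times $|k|^4$ near $k=0$ and bounded below everywhere on $\T^d$ (using the infrared bound plus $\hat F(0)\ge 0$, and the fact that $\hat F$ does not vanish — which needs a brief argument from the $|x|^2 F(x)$ summability and the infrared bound), so the denominator is harmless away from $k=0$; all the action is the numerator's behaviour at $k=0$ and the smoothness of $\widehat E$ on the rest of $\T^d$.

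The quantitative input is that $x$-space decay of a function corresponds to differentiability of its Fourier transform, but in an $L^p$ rather than pointwise sense — this is exactly why the paper advertises weak derivatives and Hausdorff--Young. Concretely, I expect the mechanism to be: if $\widehat E \in W^{m,p}(\T^d)$ for suitable $m$ and $p$ (weak derivatives up to order $m$ in $L^p$), then multiplying by $e^{-ik\cdot x}$ and integrating by parts $m$ times gives $|E(x)| \lesssim |x|^{-m}\|\widehat E\|_{W^{m,p}}$ after applying Hausdorff--Young ($L^p \to L^{p'}$) — but one must be careful because $\widehat E$ is not smooth at $k=0$, so honest integer-order integration by parts loses too much. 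This is where the fractional-derivative refinement enters: instead of asking for $m$ full derivatives, one splits $E$ into a piece near $k=0$ and a piece away from it (a smooth partition of unity on $\T^d$), handles the away-piece with as many derivatives as $\rho$ allows, and handles the near-$0$ piece by computing the $x$-decay of the singular part $|k|^{-4}$ times a polynomial correction — which, when multiplied against $\widehat{C_\mu}$-type factors, reproduces $C_\mu(x)$-scale decay $\xvee^{-(d-2)}$ for the main term and a genuinely smaller $\xvee^{-(d-2+s)}$ for the remainder. The numerator near $k=0$ behaves like $|k|^{2+(\rho\wedge 2)}$ up to logarithmic subtleties (from the $\xvee^{-(d+2+\rho)}$ bound on $F$, its Fourier transform is $C^{2}$ with a modulus-of-continuity gain of order $\rho\wedge 2$), and dividing by $|k|^4$ gives a singularity of order $|k|^{-2 + (\rho\wedge 2)}$ whose inverse transform in dimension $d$ decays like $\xvee^{-(d-2-(\rho\wedge 2))}$ — wait, that is too slow, so the right bookkeeping must be that the matching of quadratic terms kills the $|k|^{-2}$ part, leaving $|k|^{-2+\sigma}$ with $\sigma$ the smoothness gain, whose transform is $\xvee^{-(d-2+\sigma)}$ after subtracting the $C_\mu$ contribution; I would track constants carefully here. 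The two regimes $d\le 8$ versus $d>8$ in \eqref{eq:s_small}--\eqref{eq:s_large} are exactly the threshold where the available number of derivatives of $\widehat E$ on $\T^d\setminus\{0\}$, namely roughly $2+\rho$, stops exceeding $d/2$ (needed to land in $L^1$ after Hausdorff--Young with $p=2$), forcing the loss of $\frac{d-8}{2}$ in the exponent for large $d$.

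I would organize the proof as: (i) reduce to estimating $E(x)$ via the algebraic identity above, checking that the denominator is bounded away from zero on all of $\T^d$ and comparable to $|k|^4$ near the origin; (ii) Taylor-expand the numerator at $k=0$, using Assumption~\ref{ass:F} to get that $\widehat F$ has the regularity $C^{2+(\rho\wedge 2)-\eps}$ — more precisely, control the weak derivatives of $\widehat F$ of order up to $2$ and a Hölder/Besov-type gain beyond that, which is the content of the weak-derivative lemmas in Appendix~\ref{appendix:weak}; (iii) split $\widehat E = \chi \widehat E + (1-\chi)\widehat E$ with $\chi$ a bump at $0$, bound the inverse transform of each piece by an integration-by-parts/Hausdorff--Young argument, tracking how many (fractional) derivatives survive; (iv) identify the leading contribution as $\lambda C_\mu(x)$ by using \eqref{eq:C1_asymp} (or rather the defining integral for $C_\mu$) and absorbing everything else into the $O(\xvee^{-(d-2+s)})$ error, with the constraint on $s$ emerging from steps (ii)--(iii). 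The main obstacle, I expect, is step (iii): making the fractional-derivative/Hausdorff--Young bound clean enough to yield a sharp power $s$ without extraneous logarithms, and in particular handling the non-smoothness of $\widehat E$ at $k=0$ simultaneously with the need for borderline-many derivatives when $d$ is large — this is precisely the delicate point that forces the case split at $d=8$ and is, I suspect, where the bulk of the technical work (and the paper's improvement over \cite{Hara08}) lies.
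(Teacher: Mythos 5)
Your starting point is the same as the paper's: writing $\hat G-\lambda\hat C_\mu=(\hat A-\lambda\hat F)/(\hat A\hat F)$ with $\hat A=1-\mu\hat\Dnn$, and exploiting that the choice \eqref{eq:lambda_z} makes the numerator $\hat E=\hat A-\lambda\hat F$ have vanishing zeroth and second moments, so that $|\hat E(k)|\lesssim|k|^{2+\sigma}$ for $\sigma<\rho\wedge 2$; your general mechanism (weak derivatives in $L^p$, Hausdorff--Young, a fractional refinement) is also the paper's. But the plan has a genuine gap at its quantitative core. Step (ii) asks only for derivatives of $\hat F$ ``up to order $2$'' with a H\"older-type gain, and step (iii) proposes to treat the region away from $k=0$ ``with as many derivatives as $\rho$ allows,'' i.e.\ the $2+\rho$ classical derivatives guaranteed by $x^\gamma F\in\ell^1$. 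That is not enough: to convert Fourier regularity into decay $\xvee^{-(d-2+s)}$ you must differentiate the whole quotient $\hat E/(\hat A\hat F)$ roughly $d-2+s$ times, hence also $\hat F$ itself to that order, and for percolation ($\rho=d-6$, so $2+\rho=d-4<d-2$) or LTLA those classical derivatives do not exist---this is exactly the obstacle identified in \cite{Slad22_lace}. The missing idea is that $\hat F_\gamma$ still exists as a weak derivative lying in $L^q(\T^d)$ for all $|\gamma|<\tfrac d2+2+\rho$, because $x^\gamma F\in\ell^2(\Z^d)$ in that range (Lemmas~\ref{lem:FTbdd} and \ref{lemma:weak_Fourier}), and that H\"older's inequality then assembles the factors $\hat A_{\delta}/\hat A$, $\hat F_{\gamma}/\hat F$, $\hat E_{\alpha_2}/(\hat A\hat F)$ into an $L^1$ bound on $\hat f_\alpha$ (Lemma~\ref{lem:AFEbds}, Proposition~\ref{prop:f_NN}); the $k=0$ singularity of order $|k|^{\sigma-2-|\alpha|}$ is integrable precisely because $|\alpha|\le n_d<d-2+\rho\wedge 2$, as in \eqref{eq:ndub}, so no partition of unity is needed. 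Your heuristic for the $d>8$ split (``$2+\rho$ versus $d/2$'') accordingly lands on the wrong threshold: the true constraint is $d-2+s<\tfrac d2+2+\rho$, i.e.\ $s<\rho-\tfrac{d-8}{2}$, which is the source of \eqref{eq:s_large} and of the hypothesis $\rho>\tfrac{d-8}{2}$ (Remark~\ref{rk:rho_restriction}).

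Two further points. The mid-plan inference that a pointwise bound $|k|^{\sigma-2}$ on the error's transform yields $\xvee^{-(d-2+\sigma)}$ decay is exactly the implication that fails in general (\cite[Example~1.6.2]{MS93}); the decay must come from $L^1$ bounds on weak and fractional derivatives, so even in a sketch the bookkeeping has to be done at the level of derivatives, not sizes. And the fractional improvement is only named, not formulated: to gain a non-integer power one needs a concrete device such as the difference operator $U_u$, the integral representation of $\mathrm{sgn}(x_1)|x_1|^\delta$ in Lemma~\ref{lem:fracder}, Lemma~\ref{lemma:v_bound_frac}, and the bounds $\|U_u\hat f_\alpha\|_1\lesssim u^\eta$ of Proposition~\ref{prop:f_frac} and Lemma~\ref{lemma:Uterms}, none of which is supplied by the plan. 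Finally, your step (iv) is superfluous in your own setup: the algebraic identity already isolates $\lambda C_\mu$ exactly, so the entire theorem reduces to bounding the single error term, and nothing further needs to be ``identified'' or subtracted.
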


In the critical case, using $\mu = 1$ and the large-$x$ decay of $C_1(x)$ in \eqref{eq:C1_asymp},
the conclusion \eqref{eq:G_asymp} becomes,
with $\Fpp$ given by \eqref{eq:K0def}
and the same restriction on $s$,
\begin{equation} \label{eq:Gcrit_asymp}
    G(x) = \frac { a_d }{\Fpp \xvee^{d-2} }
    +
    O\(\frac 1 {\xvee^{d-2+s}}\)
    .
\end{equation}
When $\mu\in (0,1)$, which occurs in the subcritical case $\hat F(0)>0$,
Theorem~\ref{thm:gaussian_lemma} remains useful in the sense that,
although $C_\mu$ decays exponentially (its detailed asymptotics
are given in \cite{MS22}), we nevertheless obtain an upper bound
$G(x) \le O( \xvee^{-(d-2)})$ which is uniform no matter how close to
zero $\hat F(0)$ is.
We expect that Theorem~\ref{thm:gaussian_lemma} remains true for any $\rho>0$
(without our restriction that $\rho>\frac{d-8}{2}$) and that
the optimal power in the error estimate \eqref{eq:G_asymp} has
any $s< \rho\wedge 2$ for all dimensions $d>2$.

In fact,
Theorem~\ref{thm:gaussian_lemma} was first proved
(in the critical case) by Hara~\cite{Hara08}
using intricate Fourier analysis,
with error exponent\footnote{A ``slightly lengthy''
unpublished improvement
of the error term to the presumably optimal $s=\rho \wedge 2$ is mentioned in \cite[(1.39)]{Hara08}.} $s=(\rho\wedge 2)/d$
and without the assumption that $\rho > \frac{d-8}{2}$.
This assumption, which we need
for the reason indicated in Remark~\ref{rk:rho_restriction},
is satisfied for all known applications (see Section~\ref{sec:app_stat_mec}).
Our proof of Theorem~\ref{thm:gaussian_lemma} is
completely different from the proof in \cite{Hara08} and is much simpler.
For the weaker statement that in the critical case
(with $\rho > \frac{d-8}{2}\vee 0$ as in Assumption~\ref{ass:F})
\begin{equation}
\label{Gx-simple}
    G(x)=\frac { a_d }{\Fpp\xvee^{d-2} } +  o\(\frac 1 {\xvee^{d-2+s}}\)
    \
    \text{as $\abs x \to \infty$ with}
    \
    s = \begin{cases}
    0 &
    (\rho \le 1 + (\frac{d-8}{2}\vee 0))
    \\
    1 & (\rho > 1 + (\frac{d-8}{2}\vee 0)),
    \end{cases}
\end{equation}
which is sufficient
for applications, our proof is remarkably short and simple.
The improved error bound in Theorem~\ref{thm:gaussian_lemma} requires more effort,
but remains simple conceptually.

Applications to the Ising model, percolation and LTLA
involve
an inhomogeneous
convolution
equation of the form
\begin{align}
\label{eq:FHg}
F * H = g.
\end{align}
Suppose that $g$ is $\Z^d$-symmetric and satisfies
\begin{align}
\label{eq:g_decay}
\abs{ g(x) } \le \frac{ K_3 }{ \xvee^{d+ \rho \wedge 2} }
\end{align}
for some $K_3 >0$.
We are interested in solutions to \eqref{eq:FHg} given by the Fourier integral
\begin{align} \label{eq:H-fourier}
H(x) = \int_{\T^d}\frac{ \hat g(k)  e^{-ik\cdot x}}{\hat F(k)} \frac{dk}{(2\pi)^d}.
\end{align}
The following simple observation was noted in Hara \cite{Hara08} (with $s = (\rho\wedge 2)/d))$.

\begin{corollary}
\label{cor:general}
Let $d>2$, and let $F$ satisfy Assumption~\ref{ass:F} with $\hat F(0)=0$ (critical case)
and with $g$ obeying \eqref{eq:g_decay}.
Then the solution $H$ to \eqref{eq:FHg}, given by the Fourier integral
\eqref{eq:H-fourier},
satisfies
\begin{align}
H(x) = \frac{ a_d \sum_y g(y) }{ \Fpp  \xvee^{d-2} } + O\(\frac 1 {\xvee^{d-2+s}}\)
\end{align}
with $s$ as in \eqref{eq:s_small}--\eqref{eq:s_large}.
The constant in the error term depends only on $d, K_1, K_2, K_3, \rho, s$.
\end{corollary}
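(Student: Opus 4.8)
The plan is to deduce the corollary from Theorem~\ref{thm:gaussian_lemma} by realising $H$ as a convolution. Since $g\in\ell^1(\Z^d)$ by \eqref{eq:g_decay} (as $\rho\wedge2>0$) and $1/\hat F\in L^1(\T^d)$ by Assumption~\ref{ass:F}, one may interchange the sum $\hat g(k)=\sum_y g(y)e^{ik\cdot y}$ with the integral in \eqref{eq:H-fourier} (this interchange is justified by $\|g\|_1\,\|1/\hat F\|_{L^1}<\infty$) to obtain $H(x)=\sum_{y\in\Z^d}g(y)\,G(x-y)$, where $G$ is the solution of \eqref{eq:FG} from \eqref{eq:Gint}. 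In the critical case $\hat F(0)=0$ we have $\mu=1$ and $\lambda=\Fpp^{-1}$, so Theorem~\ref{thm:gaussian_lemma} combined with \eqref{eq:C1_asymp} (and $s<2$, which absorbs the $O(\veee{x}^{-d})$ term) gives $G(x)=a_d\Fpp^{-1}\veee{x}^{-(d-2)}+O(\veee{x}^{-(d-2+s)})$. Substituting this into $H=\sum_y g(y)G(\cdot-y)$ decomposes $H(x)$ into a \emph{main term} $\tfrac{a_d}{\Fpp}\sum_y g(y)\veee{x-y}^{-(d-2)}$ and an \emph{error term} bounded by $\sum_y|g(y)|\,O(\veee{x-y}^{-(d-2+s)})$. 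It suffices to treat large $|x|$, since for bounded $x$ both $H$ (a bounded function, as $\hat g/\hat F\in L^1$) and $a_d\Fpp^{-1}\veee{x}^{-(d-2)}\sum_y g(y)$ are $O(1)$.

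For the error term I would use the standard convolution-of-power-laws estimate $\sum_y|g(y)|\,\veee{x-y}^{-(d-2+s)}\lesssim \veee{x}^{-(d-2+s)}$, proved by splitting $\Z^d$ into $\{|y|\le\tfrac12|x|\}$, $\{|x-y|\le\tfrac12|x|\}$, and the complement. On the first set $\veee{x-y}$ is comparable to $\veee{x}$ and $\sum_y|g(y)|<\infty$; on the second, $|g(y)|\lesssim \veee{x}^{-(d+\rho\wedge2)}$ while $\sum_{|z|\le|x|/2}\veee{z}^{-(d-2+s)}\lesssim \veee{x}^{2-s}$ (using $s<2$); on the complement, both decay factors are comparable to their values at $|x|$ and $\sum_{|y|>|x|/2}|g(y)|\lesssim\veee{x}^{-(\rho\wedge2)}$. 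Each region contributes $O(\veee{x}^{-(d-2+s)})$, using $\rho\wedge2\ge s$; these are exactly the kinds of bounds already routine in lace-expansion analysis.

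For the main term it suffices to show $\sum_y g(y)(\veee{x-y}^{-(d-2)}-\veee{x}^{-(d-2)})=O(\veee{x}^{-(d-2+s)})$, since then $\tfrac{a_d}{\Fpp}\sum_y g(y)\veee{x-y}^{-(d-2)}=\tfrac{a_d\sum_y g(y)}{\Fpp\,\veee{x}^{d-2}}+O(\veee{x}^{-(d-2+s)})$, and combining with the error term from the previous paragraph proves the corollary. The part with $|y|>\tfrac12|x|$ is estimated exactly as above. For $|y|\le\tfrac12|x|$ the segment from $x$ to $x-y$ stays in $\{|x|>1\}$, where $z\mapsto|z|^{-(d-2)}$ is smooth with $k$-th order derivatives of size $O(|z|^{-(d-2+k)})$. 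If $s\le1$, the crude bound $|\veee{x-y}^{-(d-2)}-\veee{x}^{-(d-2)}|\lesssim |y|\,\veee{x}^{-(d-1)}\le |y|^s\veee{x}^{-(d-2+s)}$ together with $\sum_y|g(y)||y|^s<\infty$ (valid since $s<\rho\wedge2$) already gives $O(\veee{x}^{-(d-2+s)})$. If $s\in(1,2)$, one Taylor-expands to second order; the first-order contribution $\nabla\veee{x}^{-(d-2)}\cdot\sum_{|y|\le|x|/2}g(y)y$ is handled using the $\Z^d$-symmetry of $g$, which forces $\sum_{y}g(y)y=0$, so it equals $-\nabla\veee{x}^{-(d-2)}\cdot\sum_{|y|>|x|/2}g(y)y$, an acceptable tail contribution; the quadratic remainder is bounded by $|y|^2\veee{x}^{-d}\le|y|^s\veee{x}^{-(d-2+s)}$ and summed against $g$ using $s<\rho\wedge2$ once more.

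The only genuine obstacle is bookkeeping: the moments of $g$ that are finite, and the order to which $z\mapsto|z|^{-(d-2)}$ may safely be expanded, both change as $s$ crosses $1$, so one must invoke the $\Z^d$-symmetry of $g$ precisely in the regime $s>1$ and truncate each sum so that the resulting exponents match the error power permitted by \eqref{eq:s_small}--\eqref{eq:s_large}. Tracking constants through Theorem~\ref{thm:gaussian_lemma}, through \eqref{eq:g_decay}, and through $\Fpp\ge 2dK_2$ (so that $a_d/\Fpp$ is controlled by $d,K_2$) shows that the error constant depends only on $d,K_1,K_2,K_3,\rho,s$, as claimed.
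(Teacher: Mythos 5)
Your proposal is correct, and it reaches the result by the same overall skeleton as the paper (identify $H=g*G$ with $G$ from Theorem~\ref{thm:gaussian_lemma}, then transfer the asymptotics \eqref{eq:Gcrit_asymp} through the convolution with $g$), but it differs in both of the two steps. First, you justify $H=g*G$ by expanding $\hat g(k)=\sum_y g(y)e^{ik\cdot y}$ inside \eqref{eq:H-fourier} and invoking Fubini--Tonelli, which is legitimate: $g\in\ell^1(\Z^d)$ by \eqref{eq:g_decay} and $1/\hat F\in L^1(\T^d)$ by the infrared bound with $\hat F(0)=0$ and $d>2$, so $\sum_y\int_{\T^d}|g(y)|/\hat F(k)\,dk<\infty$ and the interchange is exact. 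The paper instead handles this point (which it flags as subtle because $G\notin\ell^1(\Z^d)$) by showing that the two bounded maps $\hat\varphi\mapsto\mathcal{F}^{-1}(\hat g\hat\varphi)$ and $\hat\varphi\mapsto g*\mathcal{F}^{-1}(\hat\varphi)$ from $L^1(\T^d)$ to $\ell^\infty(\Z^d)$ agree on the dense set of smooth $\hat\varphi$; your Fubini route is more direct and equally rigorous, while the operator argument is reusable beyond this specific integrand. Second, where the paper simply quotes the elementary convolution estimate of \cite[Proposition~1.7(ii)]{HHS03} to convert $G(x)=a_d\Fpp\inv\xvee^{-(d-2)}+O(\xvee^{-(d-2+s)})$ into the stated asymptotics for $H$, you reprove that estimate by hand: the three-region splitting for the error term, and for the main term the mean-value bound when $s\le 1$ versus the second-order Taylor expansion plus the vanishing first moment $\sum_y g(y)y=0$ (from $\Z^d$-symmetry, with the truncated moment controlled by its tail) when $s\in(1,2)$, each time using $s<\rho\wedge 2$ to sum $|g(y)|\,|y|^s$. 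This makes your proof self-contained at the cost of length; the exponent bookkeeping and the constant dependence on $d,K_1,K_2,K_3,\rho,s$ both check out.
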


\begin{proof}
Let $G$ be the solution to $F * G = \delta$ given by Theorem~\ref{thm:gaussian_lemma}.
It suffices to prove that $H = g*G$,
since the desired result then follows from the asymptotic formula for $G$ in \eqref{eq:Gcrit_asymp} and the elementary convolution estimate \cite[Proposition~1.7(ii)]{HHS03}.
It might appear evident from \eqref{eq:FHg} or \eqref{eq:H-fourier} that indeed $H = g * G$, but this is a subtle point because $G$ is not in $\ell^1(\Z^d)$.
We therefore proceed as follows.

Let $\mathcal{F}^{-1}:L^1(\T^d) \to \ell^\infty(\Z^d)$ denote the
inverse Fourier transform.
Given $g$ obeying \eqref{eq:g_decay},
we define two bounded linear maps $T_1$ and $T_2$ from $L^1(\T^d)$
to $\ell^\infty(\Z^d)$ by
\begin{align}
    T_1(\hat\varphi) = \mathcal{F}^{-1}(\hat g\hat\varphi),
    \qquad
    T_2(\hat\varphi) = g* \mathcal{F}^{-1}(\hat\varphi).
\end{align}
The maps are well defined since $\hat g \in L^\infty(\T^d)$
and $g \in \ell^1(\Z^d)$.
If $\hat\varphi$ is $C^\infty$, then $\varphi=\mathcal{F}^{-1}(\hat\varphi) \in \ell^1(\Z^d)$ and
$T_1(\hat\varphi)= g*\varphi=T_2(\hat\varphi)$.  The bounded linear maps
therefore agree on a dense subset of $L^1(\T^d)$, so they agree on all of
$L^1(\T^d)$.  In particular,
\begin{align}
    H= T_1(1/\hat F) = T_2(1/\hat F) = g*G   .
\end{align}
This completes the proof.
\end{proof}

\subsection{Application to statistical mechanical models}
\label{sec:app_stat_mec}

For many statistical mechanical models in the high-dimensional setting, the lace expansion provides convolution equations
for critical and subcritical two-point functions. Our results can be applied to obtain their $x$-space asymptotics.
For nearest-neighbour models, Assumption~\ref{ass:F} has been verified
with
\begin{equation}
\label{eq:rho-models-NN}
    \rho =
    \begin{cases}
    2(d-4) & \text{(self-avoiding walk, $d \ge 5$ \cite{Hara08})}
    \\
    2(d-4) & \text{(Ising, $d$ sufficiently large \cite{Saka07})}
    \\
    2(d-4) & \text{(1- or 2- component weakly-coupled $\varphi^4$, $d \ge 5$ \cite{BHH21})}
    \\
    d-6 & \text{(percolation, $d \ge 11$ \cite{FH17})}
    \\
    d-10 & \text{(LTLA, $d \ge 27$ \cite{FH21})}.
    \end{cases}
\end{equation}
(For the Ising model, percolation and LTLA, \eqref{eq:g_decay} is also verified
in the above references.)
The restriction $\rho > \frac{d-8}{2}\vee 0$ is satisfied in all cases.
See also \cite{Saka15} for $\varphi^4$.
The above references all make use of Hara's Gaussian Lemma
\cite[Section~1.2.2]{Hara08}.  Our results provide a simpler and more conceptual alternative.
In addition, our error bounds in Theorem~\ref{thm:gaussian_lemma}
translate directly into the same error bounds for the critical two-point functions
of all the above models, which improves
previous results.
For example, for the critical two-point function of the
nearest-neighbour strictly self-avoiding walk in dimensions $d \ge 5$,
the use
of Theorem~\ref{thm:gaussian_lemma}
to replace the Gaussian Lemma in
\cite{Hara08}
gives
\begin{equation}
    G_{z_c}(x) = \frac{a_d}{\Fpp\xvee^{d-2}} + O\( \frac{1}{\xvee^{d-\eps}} \)
\end{equation}
for any $\eps >0$.
Similarly, for the critical two-point function of nearest-neighbour percolation in dimensions $d \ge 11$, combined with \cite{FH17,Hara08}, Corollary~\ref{cor:general} gives
\begin{equation}
    H\crit(x) = \frac{ A\crit }{\xvee^{d-2}} + O\( \frac{1}{\xvee^{d-\eps}} \)
\end{equation}
for any $\eps >0$,
with an explicit amplitude $A\crit$.
The errors computed in \cite{Hara08}  instead involve
$\eps=2-2/d$.

\section{Proof of Gaussian deconvolution Theorem~\ref{thm:gaussian_lemma}}
\label{sec:pf-mr}

\subsection{Outline of proof}

The structure of the proof of Theorem~\ref{thm:gaussian_lemma} is very simple.

\medskip\noindent
{\bf Isolation of leading term.}
Suppose $F$ satisfies Assumption~\ref{ass:F}.
For $\mu\in(0,1]$, let $A_\mu = \delta - \mu \Dnn$,
so that $A_\mu * C_\mu = \delta$ by \eqref{eq:LGF}.
For any $\lambda\in \R$, we write
\begin{align}
\label{eq:GC}
G
&= \lambda C_\mu + \delta * G - \lambda C_\mu * \delta	\nl
&= \lambda C_\mu + (C_\mu * A_\mu) * G - \lambda C_\mu * (F * G)	\nl
&= \lambda C_\mu + C_\mu * E_{\lambda,\mu} * G,
\end{align}
with $E_{\lambda,\mu} = A_\mu - \lambda F$.
This isolates the leading term $\lambda C_\mu$:
\begin{align}
 \label{eq:G_isolate-intro}
G
&= \lambda C_\mu + f,
\qquad
f = f_{\lambda,\mu} =  C_\mu * E_{\lambda,\mu} * G.
\end{align}
The first moment of $E$ vanishes by our symmetry assumptions,
and our specific choice of $\lambda$ and $\mu$ in \eqref{eq:lambda_z}
has been made to ensure that
\begin{align}
\label{eq:E_condition}
\sum_{x\in\Z^d} E_{\lambda,\mu}(x) = \sum_{x\in\Z^d} \abs x^2 E_{\lambda,\mu}(x) = 0.	
\end{align}
Indeed, \eqref{eq:E_condition} is a system of two linear equations in $\lambda,\mu$ whose solution is given by \eqref{eq:lambda_z}.

\medskip\noindent
{\bf Error term with integer power.}
Consider Theorem~\ref{thm:gaussian_lemma}
in the critical case, where the Fourier transforms
$\hat G(k)$ and $\hat C(k)$ both behave like $|k|^{-2}$.
As we will show, the vanishing moments of
$E$ in \eqref{eq:E_condition} imply that its Fourier transform is bounded by $|k|^{2+\sigma}$
for any $0<\sigma <\rho\wedge 2$, so that
$\hat f(k) = \hat C(k)\hat E(k) \hat G(k)$ is of order $|k|^{\sigma-2}$, which
is less singular than the main term $\hat C(k)$.
Roughly speaking, this suggests that a derivative of $\hat f$ of order $a$ will have
singularity $|k|^{\sigma-2-a}$, which is integrable provided that
$a< d-2+\sigma$.  For example, if $\sigma\in (1,2)$ then $a=d-1$ is permitted.

Given a multi-index $\alpha=(\alpha_1,\ldots,\alpha_d)$ with each $\alpha_i$
a nonnegative integer, we write $|\alpha|=\sum_{i=1}^d\alpha_i$
and define the differential operator $\nabla^\alpha
= \frac{\partial^{|\alpha|}}{\partial k_1^{\alpha_1}\cdots
\partial k_d^{\alpha_d}}$.
As we explain later in more detail,
an elementary result from Fourier analysis states
that if $a$ is an integer and
the derivatives $\nabla^\alpha \hat f(k)$ are in $L^1(\T^d)$ for
$|\alpha| \le a$, then $f(x)$ decays at least as $o(|x|^{-a})$.  This allows for
a proof of Theorem~\ref{thm:gaussian_lemma} with integer-power error estimate, by taking $a=d-2$
or $a=d-1$ derivatives of $\hat f$.

An important consideration is that the calculation of $\grad^\alpha \hat f$ involves calculating $\grad^\alpha \hat F$.
If we are to take $\abs \alpha = d-2$ derivatives, since the sum $\sum_{x\in\Z^d} \abs x^{d-2} / \abs x^{d+2+\rho}$ diverges when $d - 2 \ge 2 + \rho$,
the function $|x|^{d-2} F(x)$
may not be in $\ell^1(\Z^d)$ if $\rho \le d-4$
and hence
we are not able to conclude that $\grad^\alpha \hat F$ exists.
This potentially dangerous situation arises
for percolation and LTLA, and was pointed out in \cite{Slad22_lace} as an obstacle.
We overcome this obstacle using weak derivatives, which allows us to make sense of $\grad^\alpha \hat F$ for $\abs \alpha < \half d + 2 + \rho$;
the case $|\alpha|=d-2$ gives rise to our restriction $\rho > \frac{d-8}{2} $
(see Remark~\ref{rk:rho_restriction}).

\medskip\noindent
{\bf Improved error term.}
In Section~\ref{section:proof_frac} we will prove
that for non-integer $a>0$, if
the weak derivative $\nabla^\alpha\hat f$ is in $L^1(\T^d)$ for all multi-indices $\alpha$ with $\abs \alpha = \floor a$, and if in addition  there exists $\eta \in ( a - \floor a, 1)$ such that
(with $\tilde u=(u,0,\ldots,0)\in \R^d$)
\begin{align}
\label{eq:Uf-intro}
\bignorm{ \nabla^\alpha[\hat f(\cdot +\tilde u)- \hat f(\cdot -\tilde u)]}_{L^1(\T^d)}
\lesssim u^\eta
\qquad (0\le u\le1)
\end{align}
for all $\abs \alpha = \floor a$, then $f(x)$ decays at least as fast as $|x|^{-a}$.
We apply this to obtain the error estimate stated in Theorem~\ref{thm:gaussian_lemma}.

\subsection{Proof of Theorem~\ref{thm:gaussian_lemma} with integer-power error estimate}
\label{sec:nn-weak}

We first prove a version of Theorem~\ref{thm:gaussian_lemma} with an integer power of $\abs x$ in the error term, which is in fact sufficient for applications.
The key ideas in the paper are present already here.
In Section~\ref{section:proof_frac}
we improve the error estimate
using fractional derivatives and complete the proof of Theorem~\ref{thm:gaussian_lemma}.

We assume that $\rho > \frac{d-8}{2} \vee 0$.
Let
\begin{align}
\label{eq:nddef}
n_d = d-2+s_0
\quad \text{with} \quad
s_0 = \begin{cases}
    0 & (\rho \le 1 + (\frac{d-8}{2} \vee 0) )     \\
    1 & (\rho > 1+  (\frac{d-8}{2} \vee 0)),
    \end{cases}
\end{align}
\ie, we pick the largest integer $s_0$ that satisfies \eqref{eq:s_small}--\eqref{eq:s_large}.
By definition, and since $\rho > \frac{d-8}{2} \vee 0$,
\begin{equation}
\label{eq:ndub}
    n_d < (d - 2 + \rho \wedge 2 ) \wedge ( \tfrac 1 2 d + 2 + \rho ).
\end{equation}

\begin{theorem}
\label{thm:gaussian_lemma_int}
Let $d>2$, and let $F$ satisfiy Assumption~\ref{ass:F}. Then
the solution $G$ to \eqref{eq:FG},
given by the Fourier integral \eqref{eq:Gint},
satisfies
\begin{align}
\label{eq:G_asymp_int}
    \qquad~
    G(x) =
    \lambda C_\mu(x) +  o\(\frac 1 {\abs x^{n_d}}\)
    \quad \text{as $\abs x \to \infty$}.
\end{align}
\end{theorem}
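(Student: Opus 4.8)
\textbf{Proof plan for Theorem~\ref{thm:gaussian_lemma_int}.}
The plan is to work from the decomposition $G = \lambda C_\mu + f$ with $f = C_\mu * E_{\lambda,\mu} * G$ already isolated in \eqref{eq:G_isolate-intro}, and to show that $f(x) = o(|x|^{-n_d})$ by proving that the weak derivatives $\nabla^\alpha \hat f$ lie in $L^1(\T^d)$ for every multi-index $\alpha$ with $|\alpha| = n_d$, then invoking the elementary Fourier fact that $L^1$-integrability of all $\nabla^\alpha\hat f$ up to order $a$ (an integer) forces $f(x) = o(|x|^{-a})$ (the precise statement and its proof via Riemann--Lebesgue applied to $x^\alpha f(x)$ will be recalled from the appendix). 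So the whole argument reduces to a bound on the singularity of $\nabla^\alpha\hat f$ near $k=0$, where $\hat f(k) = \hat C_\mu(k)\,\hat E_{\lambda,\mu}(k)/\hat F(k)$.

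First I would establish the three ingredients feeding into $\hat f$. (i) \emph{Smallness of $\hat E$ near the origin}: since $E_{\lambda,\mu}$ is $\Z^d$-symmetric with vanishing zeroth and second moments by \eqref{eq:E_condition}, and since $|E_{\lambda,\mu}(x)| \lesssim \xvee^{-(d+2+\rho)}$ inherits the decay of $F$ and $\Dnn$, a Taylor-type estimate gives $|\hat E_{\lambda,\mu}(k)| \lesssim |k|^{2+\sigma}$ for any $\sigma < \rho\wedge 2$; more to the point, I need the derivative version: the weak derivative $\nabla^\alpha\hat E_{\lambda,\mu}(k)$ exists for $|\alpha| \le n_d$ (here is where $n_d < \frac12 d + 2 + \rho$, i.e.\ $|x|^{|\alpha|}E(x)\in\ell^1$ after one uses weak derivatives as flagged in the outline and Remark~\ref{rk:rho_restriction}) and is $O(|k|^{2+\sigma - |\alpha|})$ near $0$ for suitable $\sigma$. (ii) \emph{Infrared behaviour of $\hat F$ and $\hat C_\mu$}: Assumption~\ref{ass:F} gives $\hat F(k) - \hat F(0) \ge K_2|k|^2$, and in the critical case $\hat F(0)=0$ so $\hat F(k)\gtrsim |k|^2$; one also needs $\hat F(k) > 0$ on all of $\T^d$ (which follows from the infrared bound plus continuity and $\Z^d$-symmetry, since $\hat F$ can only vanish at $k=0$), and the elementary smoothness of $1/(1-\mu\hat\Dnn)$ away from its singularity. (iii) Then $\hat f = \hat C_\mu\,\hat E_{\lambda,\mu}/\hat F$; near $k=0$ this is $\sim |k|^{-2}\cdot|k|^{2+\sigma}\cdot|k|^{-2} = |k|^{\sigma-2}$, and by the Leibniz rule each derivative $\nabla^\alpha$ of order $|\alpha|=n_d$ costs at worst $|k|^{-|\alpha|}$, giving $|\nabla^\alpha\hat f(k)| \lesssim |k|^{\sigma - 2 - n_d + (\text{something})}$; carefully bookkeeping the cancellations (the $\hat E$ factor and its derivatives supply the extra powers) yields $|\nabla^\alpha\hat f(k)|\lesssim |k|^{\sigma - n_d}$, which is in $L^1(\T^d)$ precisely when $\sigma - n_d > -d$, i.e.\ $n_d < d + \sigma$; taking $\sigma$ just below $\rho\wedge 2$ this is exactly $n_d < d - 2 + \rho\wedge 2$ after accounting for the two-power gain, which is \eqref{eq:ndub}. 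The subcritical case $\hat F(0)>0$ is easier since then $\hat F$ is bounded below and $\hat C_\mu$ is smooth, so $\hat f$ is at least as regular.

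For the bookkeeping I would split $\T^d$ into a small ball $B_\epsilon(0)$ and its complement. Off the ball, $\hat F$ is bounded below, $\hat C_\mu$ and $1/\hat F$ are $C^\infty$ (or at least have enough weak derivatives), and $\hat E_{\lambda,\mu}$ has the requisite weak derivatives in $L^1$ by the decay of $E$; so $\nabla^\alpha\hat f \in L^1$ there is routine. On $B_\epsilon(0)$ the work is to make the heuristic "$\nabla^\alpha$ of a function behaving like $|k|^{\sigma-2}$ behaves like $|k|^{\sigma-2-|\alpha|}$" rigorous for the quotient; the clean way is to write $\hat f = \hat E_{\lambda,\mu}\cdot g_\mu$ with $g_\mu = \hat C_\mu/\hat F$, apply Leibniz, and bound each $\nabla^\beta\hat E_{\lambda,\mu}$ by $|k|^{2+\sigma-|\beta|}$ (for $\sigma < \rho\wedge 2$, using vanishing moments for the low-order derivatives and raw decay of $E$ for the rest) and each $\nabla^\gamma g_\mu$ by $|k|^{-2-|\gamma|}$ (differentiating $1/\hat F$ via Fa\`a di Bruno, using $\hat F\gtrsim|k|^2$ and $|\nabla^\delta\hat F(k)|\lesssim |k|^{(2-|\delta|)\vee 0}$, the latter again from weak derivatives of $|x|^{|\delta|}F(x)$ which is summable since $|\delta|\le n_d < \frac12 d + 2 + \rho$). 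Multiplying, the $|k|^{2}$ from $\hat E$ cancels one $|k|^{-2}$, leaving $|k|^{\sigma - |\alpha|}\cdot|k|^{-2}\cdot|k|^{2}$... rather, the total is $|k|^{\sigma}\cdot|k|^{-|\alpha|} \cdot$ (extra $|k|^2$ from one uncancelled $\hat C_\mu$-type factor is absorbed by choosing to pair derivatives so at least two orders of the $\hat E$ Taylor expansion survive), net $|k|^{\sigma - n_d + 2 - 2} $, i.e.\ up to constants $|k|^{\sigma'-n_d}$ with $\sigma'$ any number below $\rho\wedge 2$, hence integrable near $0$ by \eqref{eq:ndub}. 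The main obstacle is exactly this last step: justifying the existence and near-origin growth of the weak derivatives of $\hat F$, $\hat E_{\lambda,\mu}$, and the quotient $1/\hat F$ of orders up to $n_d$ when the naive $\ell^1$ condition $|x|^{n_d}F(x)\in\ell^1$ fails — this is where weak derivatives (Appendix~\ref{appendix:weak}) and the constraint $n_d < \frac12 d + 2 + \rho$ (Remark~\ref{rk:rho_restriction}) do the essential work, and where the contrast with the direct argument of \cite{Slad22_lace} lies. Everything else — Leibniz, Fa\`a di Bruno, geometric-series bounds for $1/(1-\mu\hat\Dnn)$, and the final Riemann--Lebesgue invocation — is routine.
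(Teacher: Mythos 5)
Your overall architecture is the paper's: the same decomposition $G=\lambda C_\mu+f$ with $\hat f=\hat E/(\hat A\hat F)$, the same reduction to showing $\nabla^\alpha\hat f\in L^1(\T^d)$ for $|\alpha|\le n_d$ followed by the Riemann--Lebesgue-type decay lemma, and the same identification of weak derivatives plus the constraint $n_d<\frac12 d+2+\rho$ as the way past the $\ell^1$ obstruction. But there is a genuine gap in the step you call routine bookkeeping: you propose to run a Leibniz/Fa\`a di Bruno computation using \emph{pointwise} bounds $|\nabla^\delta\hat F(k)|\lesssim |k|^{(2-|\delta|)\vee 0}$ and $|\nabla^\alpha\hat E(k)|\lesssim |k|^{2+\sigma-|\alpha|}$ for all orders up to $n_d$. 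Such pointwise bounds are only available when $|x|^{|\delta|}F(x)\in\ell^1(\Z^d)$, i.e.\ for $|\delta|<2+\rho$; for $2+\rho\le|\delta|\le n_d$ (which is exactly the regime that occurs for percolation, where $\rho=d-6$ and $n_d=d-1>2+\rho$, and for LTLA) the weak derivative $\hat F_\delta$ exists only as an $L^2$-type object via Lemma~\ref{lemma:weak_Fourier}, and no $L^\infty$ or pointwise power bound can be asserted --- the Fourier series $\sum_x x^\delta F(x)e^{ik\cdot x}$ is not absolutely convergent. So the central estimate $|\nabla^\alpha\hat f(k)|\lesssim|k|^{\sigma-2-|\alpha|}$ on a small ball, on which your integrability count rests, is not justified precisely where the obstacle from \cite{Slad22_lace} bites; merely knowing that the weak derivatives exist does not supply the quantitative control your Leibniz expansion needs. (A secondary sign of trouble: your claimed bound $|\nabla^\gamma(\hat C_\mu/\hat F)|\lesssim|k|^{-2-|\gamma|}$ should be $|k|^{-4-|\gamma|}$, since $\hat A\hat F\sim|k|^4$, and the ``absorption'' of the missing two powers is not explained.)

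The paper closes exactly this gap by abandoning pointwise control of high-order derivatives altogether: for $|\gamma|\ge 2+\rho$ it records only $L^q$ information, namely $\hat F_\gamma\in L^{d/(|\gamma|-2)}(\T^d)$ and $\hat E_\gamma\in L^r(\T^d)$ for $r\inv>(|\gamma|-2-\sigma)/d$, obtained from the decay $|x|^{|\gamma|}|F(x)|\lesssim \xvee^{-b}$ with $b=d+2+\rho-|\gamma|\in(\frac d2,d]$ via the Hausdorff--Young inequality (Lemma~\ref{lem:FTbdd}); the singular factors $\hat A\inv,\hat F\inv\sim|k|^{-2}$ are kept separate and the various pieces of the quotient-rule expansion \eqref{eq:f_decomp0} are recombined by H\"older's inequality, with the exponent count $\frac1r>\frac{|\alpha|+2-\sigma}{d}$ allowing $r=1$ because $n_d<d-2+\rho\wedge2$ (Lemma~\ref{lem:AFEbds} and Proposition~\ref{prop:f_NN}). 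Pointwise bounds of the kind you invoke are used only for $\hat E_\gamma$ with $|\gamma|<2+\sigma$ (Lemma~\ref{lemma:Ek}), where the vanishing moments and absolute summability make them legitimate. To repair your argument you would need to replace your pointwise bookkeeping for the high-order derivatives by this $L^q$/H\"older scheme (or something equivalent), which is the actual content of the paper's proof rather than a routine verification.
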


\subsubsection{Fourier analysis}
\label{sec:prelim}

Let $F$ satisfy Assumption~\ref{ass:F}.
As in \eqref{eq:G_isolate-intro}, we write
\begin{align}
G
&= \lambda C_\mu + f,
\qquad
f = f_{\lambda,\mu} =  C_\mu * E_{\lambda,\mu} * G,
\end{align}
where $\lambda$ and $\mu$ have been chosen as in \eqref{eq:lambda_z} in order to
ensure that $E_{\lambda,\mu}$ has vanishing zeroth and second moments.
The following lemma translates this good moment behaviour
into good bounds on $\hat E_{\lambda, \mu}$ and its derivatives, which are fundamental in our proof.
Here and subsequently we omit the subscripts $\lambda, \mu$ and
use subscripts to denote partial derivatives instead, e.g., $\hat E_\alpha = \nabla^\alpha \hat E_{\lambda, \mu}$.

\begin{lemma}
\label{lemma:Ek}
Suppose $E: \Z^d \to \R$ is $\Z^d$-symmetric,
has vanishing zeroth and second moments as in
\eqref{eq:E_condition},
and satisfies $\abs{ E(x) } \le  K \abs x^{-(d+2 + \rho)}$ for some $K, \rho>0$.
Choose $\sigma \in (0, \rho)$ such that $\sigma \le 2$ and let $\alpha$ be a multi-index
with
$\abs \alpha < 2 + \sigma$.
Then there is a constant $c = c(\sigma, \rho, d)$ such that
\begin{align} \label{eq:Ek}
\abs{ \hat E_\alpha (k) }
\le cK \, \abs k^{2+\sigma - \abs \alpha}.
\end{align}
\end{lemma}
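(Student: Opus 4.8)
The plan is to estimate $\hat E_\alpha(k) = \nabla^\alpha \hat E(k)$ by writing it as a Fourier series and splitting the sum over $x$ at the scale $|x| \sim 1/|k|$. Since $E$ is $\Z^d$-symmetric with vanishing zeroth and second moments, $\hat E(k) = \sum_x E(x) e^{ik\cdot x} = \sum_x E(x)\cos(k\cdot x)$ is real and even, and subtracting off the first three terms of the Taylor expansion of $\cos$ costs nothing: $\hat E(k) = \sum_x E(x)\bigl[\cos(k\cdot x) - 1 + \tfrac12 (k\cdot x)^2\bigr]$, because $\sum_x E(x) = 0$ and $\sum_x E(x)(k\cdot x)^2 = \sum_{i,j} k_i k_j \sum_x E(x) x_i x_j$, which vanishes by the vanishing second moment together with $\Z^d$-symmetry (the off-diagonal terms $\sum_x E(x) x_i x_j$ with $i\neq j$ vanish by reflection symmetry, and the diagonal terms $\sum_x E(x) x_i^2$ are all equal by the $\pi/2$-rotation symmetry, hence each equals $\tfrac1d \sum_x |x|^2 E(x) = 0$). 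Differentiating $\abs\alpha$ times brings down a factor $(ix)^\alpha = \prod_i (ix_i)^{\alpha_i}$, so
$$
\hat E_\alpha(k) = \sum_{x\in\Z^d} E(x)\, (ix)^\alpha \,\nabla^\alpha\!\bigl[\text{same Taylor remainder}\bigr],
$$
and more carefully, $\nabla^\alpha\bigl[\cos(k\cdot x) - 1 + \tfrac12(k\cdot x)^2\bigr]$: if $\abs\alpha = 0$ it is the remainder itself, bounded by $\min\{|k\cdot x|^2, |k\cdot x|^{2+\sigma}\} \lesssim |k|^{2+\sigma}|x|^{2+\sigma}$ on the relevant range; if $\abs\alpha = 1$ it equals $-x_i \sin(k\cdot x) + x_i (k\cdot x)$ up to the monomial factor, bounded by $|k\cdot x|\cdot|x|\cdot\min\{1, |k\cdot x|^{1+\sigma}\}$-type quantities; and if $\abs\alpha = 2$ the constant and quadratic Taylor terms are annihilated and one is left with a pure trigonometric derivative of $\cos$, i.e.\ $\pm x^\alpha \cos(k\cdot x)$ or $\pm x^\alpha\sin(k\cdot x)$, bounded by $|x|^{\abs\alpha}\min\{1, |k\cdot x|^\sigma\}$. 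In all cases the bound on $\nabla^\alpha[\cdots]$ is $\lesssim |x|^{\abs\alpha}\,\min\{1,\ |k|^{2+\sigma-\abs\alpha}|x|^{2+\sigma-\abs\alpha}\}$ after absorbing the $(ix)^\alpha$-monomial.

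With this uniform estimate in hand, the main step is the dyadic split. Write $R = 1/\abs k$ and bound
$$
\abs{\hat E_\alpha(k)} \lesssim \sum_{|x|\le R} \abs{E(x)}\,|x|^{\abs\alpha}\,|k|^{2+\sigma-\abs\alpha}|x|^{2+\sigma-\abs\alpha}
\;+\; \sum_{|x|> R} \abs{E(x)}\,|x|^{\abs\alpha},
$$
where in the first sum I use the second ($|k\cdot x|$-small) branch of the minimum and in the second sum the first branch (here one needs $\abs\alpha < 2+\sigma$ so that the trivial bound $\min\{1,\cdots\}\le 1$ and more importantly the remainder estimate still only produces the monomial factor $|x|^{\abs\alpha}$; for $\abs\alpha\in\{0,1,2\}$ this is exactly the range $\abs\alpha<2+\sigma$ since $\sigma>0$). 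Inserting $\abs{E(x)}\le K|x|^{-(d+2+\rho)}$, the first sum is $K|k|^{2+\sigma-\abs\alpha}\sum_{|x|\le R}|x|^{2+\sigma-(d+2+\rho)}$ and the second is $K\sum_{|x|>R}|x|^{\abs\alpha-(d+2+\rho)}$. Comparing sums with integrals: the first sum converges at infinity iff $2+\sigma-\rho < 0$, which holds since $\sigma<\rho$, and then it is $\lesssim R^{\,0}$ if $2+\sigma>\rho$ — wait, one must be slightly careful: the exponent in the first sum is $2+\sigma - 2 - \rho = \sigma - \rho$ relative to $|x|^{-d}$, i.e.\ $\sum_{|x|\le R}|x|^{\sigma-\rho-d}$; since $\sigma - \rho < 0$ this is $O(1)$ uniformly in $R$, giving a contribution $\lesssim K|k|^{2+\sigma-\abs\alpha}$. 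The second sum is $\sum_{|x|>R}|x|^{\abs\alpha-\rho-2-d} = \sum_{|x|>R}|x|^{(\abs\alpha - 2 - \rho) - d}$, and since $\abs\alpha < 2+\sigma < 2+\rho$ the exponent $\abs\alpha - 2 - \rho$ is negative, so this sum is $\lesssim R^{\,\abs\alpha-2-\rho} = |k|^{2+\rho-\abs\alpha}$. Since $\rho > \sigma$ we have $|k|^{2+\rho-\abs\alpha} \le |k|^{2+\sigma-\abs\alpha}$ for $|k|\le 1$ (and $\T^d$ is bounded, so $|k|$ is bounded), hence both pieces are $\lesssim K|k|^{2+\sigma-\abs\alpha}$, which is the claim. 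Existence of the classical derivative $\hat E_\alpha$ itself follows because $\abs\alpha < 2+\sigma < 2+\rho$ forces $|x|^{\abs\alpha}\abs{E(x)}$ to be summable (exponent $\abs\alpha - (d+2+\rho) < -d$), so term-by-term differentiation of the Fourier series is legitimate.

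The main obstacle, and the only place requiring genuine care, is the bookkeeping for the derivative of the trigonometric remainder: one must verify uniformly over $x$ and over the three cases $\abs\alpha\in\{0,1,2\}$ that $\bigl|\nabla^\alpha[\cos(k\cdot x)-1+\tfrac12(k\cdot x)^2]\bigr| \lesssim \min\{|k\cdot x|^{(2-\abs\alpha)\vee 0},\ |k\cdot x|^{2+\sigma-\abs\alpha}\}$, interpolating between the ``$|k\cdot x|$ large'' bound (just bound each trig function by $1$, but note for $\abs\alpha=0,1$ the surviving polynomial Taylor terms grow, so one actually gets the $|k\cdot x|^{2}$ or $|k\cdot x|$ behaviour) and the ``$|k\cdot x|$ small'' bound (Taylor expand once more and gain the extra $|k\cdot x|^\sigma$ by keeping $\sigma$-Hölder continuity of the appropriate derivative of $\cos$). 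Once this elementary inequality is pinned down, the dyadic sum estimate above is routine, and the restriction $\abs\alpha < 2+\sigma$ is precisely what is needed to keep the post-$R$ tail summable and to stay within the regime where the remainder contributes only the monomial factor.
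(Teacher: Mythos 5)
Your overall strategy is essentially the paper's: use the vanishing zeroth and second moments of \eqref{eq:E_condition} together with $\Z^d$-symmetry (handling the off-diagonal and diagonal second moments exactly as you do) to write $\hat E(k)=\sum_x E(x)g_x(k)$ with $g_x(k)=\cos(k\cdot x)-1+\tfrac12(k\cdot x)^2$, and then estimate $\nabla^\alpha g_x$ by a fractional power of $|k\cdot x|$. The paper gets by with the single global bound $|\nabla^\alpha g_x(k)|\lesssim |k|^{2+\sigma-|\alpha|}|x|^{2+\sigma}$ and sums over all $x$ at once, the sum converging because $\sigma<\rho$; your additional split of the sum at $|x|\sim 1/|k|$ is harmless but not needed.

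There is, however, a genuine gap: you only treat $|\alpha|\in\{0,1,2\}$ and explicitly assert that this exhausts the range $|\alpha|<2+\sigma$. That is false when $\sigma\in(1,2]$, in which case $|\alpha|=3$ is permitted by the hypotheses of Lemma~\ref{lemma:Ek}, and this case is actually used downstream: when $\rho>1$ one takes $\sigma$ close to $\rho\wedge 2>1$, and third derivatives $\hat E_\gamma$ with $|\gamma|=3<2+\sigma$ are estimated via Lemma~\ref{lemma:Ek} in the proofs of Lemma~\ref{lem:AFEbds} and Proposition~\ref{prop:f_NN}. The missing case is easy with your own method: $\nabla_{ijl}g_x(k)=x_ix_jx_l\sin(k\cdot x)$ and $|\sin t|\le\min\{1,|t|\}\le|t|^{\sigma-1}$ give $|\nabla^\alpha g_x(k)|\lesssim|x|^{3}|k\cdot x|^{\sigma-1}$, after which the summation goes through unchanged; but as written your proof does not cover the lemma as stated. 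Two smaller points of consistency: the displayed identity $\hat E_\alpha(k)=\sum_x E(x)(ix)^\alpha\nabla^\alpha[\,\cdot\,]$ double-counts the monomial (the factor $x^\alpha$ is produced by differentiating $g_x$, so the correct identity is $\hat E_\alpha(k)=\sum_x E(x)\nabla^\alpha g_x(k)$; your ``absorbing'' remark indicates you know this, but the formula is wrong as written), and the uniform bound $|\nabla^\alpha g_x(k)|\lesssim|x|^{|\alpha|}\min\{1,|k|^{2+\sigma-|\alpha|}|x|^{2+\sigma-|\alpha|}\}$ invoked for your tail sum is false for $|\alpha|\in\{0,1\}$ (for $|\alpha|=0$ the remainder grows like $(k\cdot x)^2$). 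With the corrected large-argument branch $|k\cdot x|^{(2-|\alpha|)\vee 0}$ that you state in your final paragraph, the tail still comes out as $O(|k|^{2+\rho-|\alpha|})\lesssim|k|^{2+\sigma-|\alpha|}$ on the bounded torus, so the conclusion survives, but the write-up should use the correct bound throughout.
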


\begin{proof}
Let $g_x(k) = \cos(k\cdot x) - 1 + \frac{ (k\cdot x)^2 }{2!}$.
Since $E$ is even, $\sum_{x\in\Z^d} x_i x_j E(x) = 0$ for $i\ne j$. By \eqref{eq:E_condition} and by  symmetry, $\sum_{x\in\Z^d} x_i^2 E(x) = 0$ for all $i$.
Therefore,
\begin{align}
\hat E(k) = \sum_{x\in\Z^d} E(x) \cos(k\cdot x) = \sum_{x\in\Z^d} E(x) g_x(k).
\end{align}
By explicit differentiation of $g_x(k)$, together with
inequalities such as $\abs{ \cos t - 1 + \frac 12 t^2} \lesssim \abs t^{2+\tau}$
for any $\tau\in [0, 2]$, we find that
\begin{equation}
     |g_x(k)| \lesssim |k\cdot x|^{2+\sigma}  ,
    \qquad
    |\nabla_i g_x(k)| \lesssim |x|\,|k\cdot x|^{1+\sigma},
    \qquad
     |\nabla_{ij}^2 g_x(k)| \lesssim |x|^2 |k\cdot x|^{\sigma} .
\end{equation}
The case $|\alpha|=3$ can only occur if $\sigma\in (1,2]$, and in this case
\begin{equation}
    |\nabla_{ijl} g_x(k)| = |x_ix_jx_l\sin(k\cdot x)|
    \lesssim |x|^3 |k\cdot x|^{\sigma-1}.
\end{equation}
In all cases, we have
$\abs{ \grad^\alpha g_x(k) } \le c_{{0}} \abs k^{2+\sigma - \abs \alpha} \abs x^{2+\sigma}$, so
\begin{align}
\abs{ \hat E_\alpha (k) }
&\le \sum_{x\in\Z^d} \abs {E(x)} \abs{ \grad^\alpha g_x(k) }
\le c_{{0}}K \, \abs k^{2+\sigma - \abs \alpha} \sum_{x\ne 0}  \frac {\abs x^{2+\sigma}} {\abs x^{d+2+\rho}}.
\end{align}
The sum on the right-hand side converges since $\sigma <\rho$, and the proof is complete.
\end{proof}

For a function $g : \T^d \to \C$
and $p \in [1,\infty)$, we denote $L^p(\T^d)$ norms with a subscript $p$:
\begin{equation}
    \norm g _p^p = \int_{\T^d} |g(k)|^p \frac{dk}{(2\pi)^d} ,
\end{equation}
and we write $\|g\|_\infty$ for the supremum norm.

We will use the following general fact which relates the smoothness of a function
on $\T^d$ to the decay of its Fourier coefficients.
It involves the notion of \emph{weak derivative},
which extends the classical notion of derivative to locally integrable functions.
The weak derivative is defined via the usual integration by parts formula,
and for this reason the function and its derivative are required to be locally integrable.
We work on the compact space $\T^d$, so weak derivatives are automatically 
in $L^1(\T^d)$ when they exist.
Other basic facts concerning weak derivatives are recalled in Appendix~\ref{appendix:weak}.
It is by using weak derivatives that we overcome the obstacles to naive application
of the ideas in \cite{Slad22_lace} to percolation and LTLA.
In the proof of Lemma~\ref{lem:Graf}, and elsewhere,
for a multi-index $\alpha$ and for $x=(x_1,\ldots,x_d) \in \Z^d$ we use the notation
$x^\alpha = \prod_{i=1}^d  x_i^{\alpha_i}$.

\begin{lemma}
\label{lem:Graf}
Let $n,d>0$ be positive integers.
Suppose that $\hat h : \T^d \to \C$ is  $n$ times weakly differentiable,
and let $h : \Z^d \to \C$
be the inverse Fourier transform of $\hat h$.
There is a constant $c_{d,n}$ depending only on the dimension $d$ and the
order $n$ of differentiation, such that
\begin{equation}
\label{eq:Graf}
    |h(x)|
    \le
    c_{d,n} \frac{1}{\xvee^n}
    \max_{|\alpha| \in \{0,n\}}\|\hat h_\alpha\|_1 .
\end{equation}
Moreover, $|x|^n h(x) \to 0$ as $|x|\to\infty$.
\end{lemma}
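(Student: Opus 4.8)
\textbf{Proof proposal for Lemma~\ref{lem:Graf}.}
The plan is to reduce everything to the one-dimensional fact that a function with an $L^1$ weak derivative has Fourier coefficients decaying like the reciprocal of the frequency, and then to iterate in each coordinate direction. First I would recall the elementary identity: if $\hat h$ is weakly differentiable on $\T^d$, then integration by parts (valid for weak derivatives, cf.\ Appendix~\ref{appendix:weak}) against the character $e^{-ik\cdot x}$ gives, for each coordinate $i$ with $x_i\neq 0$,
\begin{equation}
    h(x) = \int_{\T^d} \hat h(k) e^{-ik\cdot x}\,\frac{dk}{(2\pi)^d}
    = \frac{1}{ix_i}\int_{\T^d} \hat h_{e_i}(k) e^{-ik\cdot x}\,\frac{dk}{(2\pi)^d},
\end{equation}
where $e_i$ is the $i$-th standard multi-index and the boundary terms vanish by periodicity. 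Applying this $n$ times in whichever coordinate direction $i$ realizes $|x_i| = \max_j |x_j| \ge |x|/\sqrt d$ produces $|h(x)| \le |x_i|^{-n}\,\|\hat h_{n e_i}\|_1 \le (\sqrt d/|x|)^n\,\|\hat h_{n e_i}\|_1$ whenever $|x|\ge 1$. Since the $n$-th pure derivative in a single direction is among the $\alpha$ with $|\alpha| = n$, combining with the trivial bound $|h(x)| \le \|\hat h\|_1$ for $|x|\le 1$ (to handle $\xvee$) yields \eqref{eq:Graf} with $c_{d,n} = d^{n/2}$ (or any convenient constant dominating both regimes).

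For the ``moreover'' statement, the bound \eqref{eq:Graf} only gives $|x|^n h(x) = O(1)$, not $o(1)$, so a small additional argument is needed. The cleanest route is to observe that $|x|^n h(x)$ is, up to bounded coefficients, a finite linear combination of inverse Fourier transforms of the functions $\hat h_\alpha$ with $|\alpha| = n$ (expanding $|x|^n = (\sum_i x_i^2)^{n/2}$ is awkward for odd $n$, so instead I would bound $|x|^n \le C_d \sum_{|\alpha|=n} |x^\alpha|$ using $|x|^n \le C_d \max_i |x_i|^n$ and then, for the direction attaining the max, $|x_i|^n h(x)$ is exactly $\pm$ the inverse transform of $\hat h_{n e_i} \in L^1(\T^d)$). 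By the Riemann--Lebesgue lemma, the inverse Fourier transform of any $L^1(\T^d)$ function tends to $0$ at infinity, so each such term is $o(1)$, and hence $|x|^n h(x)\to 0$.

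I expect the main obstacle to be purely bookkeeping rather than conceptual: making the iterated integration-by-parts rigorous \emph{for weak derivatives} (as opposed to classical ones) requires knowing that the weak derivative of a weakly differentiable function is again weakly differentiable of one lower order when $\hat h$ is assumed $n$ times weakly differentiable, and that integration by parts against smooth test functions (here the characters $e^{-ik\cdot x}$, which are smooth on $\T^d$) is legitimate at each stage — all of which is standard and collected in Appendix~\ref{appendix:weak}. The only mild subtlety is matching the $\max_{|\alpha|\in\{0,n\}}$ form of the right-hand side: one must note that only a \emph{single} mixed-free pure derivative $\hat h_{n e_i}$ is used (not all $|\alpha|=n$), and the $|\alpha|=0$ term is used solely to absorb small $|x|$ into $\xvee^n$; the stated maximum is then a (harmless) over-estimate.
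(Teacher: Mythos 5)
Your proof is correct and follows essentially the same route as the paper: the paper simply cites \cite[Corollary~3.3.10]{Graf14} for the bound \eqref{eq:Graf} (remarking that its proof is exactly the iterated integration by parts you carry out, valid for weak derivatives), and obtains the ``moreover'' statement from the Riemann--Lebesgue lemma applied to the $L^1$ functions $\hat h_\alpha$, just as you do. Your version merely spells out the cited argument, including the harmless observation that only the pure derivatives $\hat h_{n e_i}$ are needed.
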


\begin{proof}
The bound \eqref{eq:Graf} follows from
\cite[Corollary~3.3.10]{Graf14}.
Although stated for classical derivatives, the proof of \cite[Corollary~3.3.10]{Graf14}
applies for weak derivatives since it only uses integration by parts.
Also, it follows from the Riemann--Lebesgue Lemma
\cite[Proposition~3.3.1]{Graf14} and
the integrability of $\hat h_\alpha$ that the inverse Fourier transform of
$\hat h_\alpha$, which is a constant multiple of $x^\alpha h(x)$, vanishes at infinity.
Since this holds for any $\abs \alpha \le n$, we obtain $ \abs x^n h(x) \to 0$ as $\abs x \to \infty$.
\end{proof}

\begin{proposition}
\label{prop:f_NN}
Let $F$ obey Assumption~\ref{ass:F}.
Then the function $\hat f = \hat C \hat E \hat G$ is $n_d$ times weakly differentiable.
\end{proposition}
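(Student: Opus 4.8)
The plan is to show that each of the three factors $\hat C$, $\hat E$, $\hat G$ is controlled well enough near the singularity $k=0$ that the product $\hat f = \hat C\hat E\hat G$, together with all its weak derivatives up to order $n_d$, is integrable on $\T^d$. The only place where integrability can fail is a neighbourhood of the origin, since away from $0$ the denominator $\hat F(k)$ is bounded below (by Assumption~\ref{ass:F} and continuity), and $\hat E$ is smooth there because $E = A_\mu - \lambda F$ inherits the decay $|E(x)| \lesssim \xvee^{-(d+2+\rho)}$ from $F$ and from $A_\mu$ (which is finitely supported). So the heart of the matter is a Leibniz-rule estimate near $k=0$.

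First I would record the bounds on the individual factors. For $\hat C = (1-\mu\hat\Dnn)\inv$, classical estimates for the lattice Green function give $|\nabla^\alpha \hat C(k)| \lesssim |k|^{-2-|\alpha|}$ near $0$ (with $\mu=1$ the worst case; for $\mu<1$ it is bounded). For $\hat E$, Lemma~\ref{lemma:Ek} gives, for any $\sigma\in(0,\rho)$ with $\sigma\le 2$ and any multi-index $|\beta| < 2+\sigma$, the bound $|\hat E_\beta(k)| \lesssim |k|^{2+\sigma-|\beta|}$. For $\hat G = 1/\hat F$, the infrared bound $\hat F(k)-\hat F(0)\ge K_2|k|^2$ together with $\hat F(0)\ge 0$ gives $|\hat G(k)| \lesssim |k|^{-2}$; and since $\hat F$ is a smooth function of $k$ near $0$ only up to the order for which $|x|^{|\gamma|}F(x)$ is summable, here too I need weak derivatives — one differentiates $\hat G = 1/\hat F$ via the quotient rule, so $\nabla^\gamma \hat G$ is a sum of products of $\nabla^{\gamma'}\hat F$ (each of which exists as a weak derivative provided $|\gamma'| < \frac d2 + 2+\rho$, by the summability threshold for $|x|^{|\gamma'|}|F(x)| \lesssim |x|^{|\gamma'|-d-2-\rho}$) over powers of $\hat F$, yielding $|\nabla^\gamma \hat G(k)| \lesssim |k|^{-2-|\gamma|}$. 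Here I would invoke the weak-derivative product and chain/quotient rules from Appendix~\ref{appendix:weak}, which is exactly what licenses manipulating $1/\hat F$ as if it were smooth.

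Next, applying the Leibniz rule for weak derivatives to $\hat f = \hat C\,\hat E\,\hat G$ and distributing $\alpha = \alpha_1+\alpha_2+\alpha_3$ over the three factors with $|\alpha| = n_d$, a typical term near $k=0$ is bounded by
\begin{equation*}
|k|^{-2-|\alpha_1|}\cdot |k|^{2+\sigma-|\alpha_2|}\cdot |k|^{-2-|\alpha_3|}
= |k|^{\sigma - 2 - n_d}.
\end{equation*}
This is in $L^1$ of a neighbourhood of $0$ in $\R^d$ precisely when $\sigma - 2 - n_d > -d$, i.e. $n_d < d - 2 + \sigma$; taking $\sigma \uparrow \rho\wedge 2$ this is exactly the first half of \eqref{eq:ndub}. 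The constraint from Lemma~\ref{lemma:Ek} that we may differentiate $\hat E$ only to order $|\alpha_2| < 2+\sigma \le 4$ is harmless: if more than three derivatives land on $\hat E$, the extra ones are moved onto $\hat C$ or $\hat G$ at the cost of one more negative power of $|k|$ each, and one checks the exponent count still closes (equivalently, bound the surplus factors by reassigning them and re-run the arithmetic). The other constraint — that the weak derivatives of $\hat F$ (hence of $\hat G$) used in the Leibniz expansion have order $< \frac d2+2+\rho$ — is guaranteed by the second half of \eqref{eq:ndub}, namely $n_d < \frac12 d + 2 + \rho$, since no factor receives more than $n_d$ derivatives.

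The main obstacle, and the genuinely new point over \cite{Slad22_lace}, is the bookkeeping that keeps every derivative order below the weak-differentiability threshold for $\hat F$: one must verify that across the entire Leibniz expansion no term ever requires $\nabla^\gamma\hat F$ with $|\gamma|\ge \frac d2+2+\rho$, which is where the hypothesis $\rho > \frac{d-8}{2}$ is used (it makes $n_d < \frac d2+2+\rho$ hold even in the worst case $n_d = d-2+s_0$). Once the derivative-order accounting is in place, the integrability estimate near $0$ is a one-line power count as above, and away from $0$ everything is smooth and bounded, so $\nabla^\alpha\hat f \in L^1(\T^d)$ for all $|\alpha|\le n_d$.
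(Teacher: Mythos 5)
There is a genuine gap, and it sits exactly at the obstacle this proposition is designed to overcome. Your power count rests on the pointwise bound $|\nabla^\gamma \hat G(k)| \lesssim |k|^{-2-|\gamma|}$, which you derive by expanding $\hat G = 1/\hat F$ with the quotient rule and implicitly using pointwise control of the factors $\nabla^{\gamma'}\hat F$. But a pointwise (i.e.\ $L^\infty$) bound on $\hat F_{\gamma'}$ is available only when $|x|^{|\gamma'|}F(x) \in \ell^1(\Z^d)$, i.e.\ when $|\gamma'| < 2+\rho$. The threshold $|\gamma'| < \tfrac d2 + 2 + \rho$ that you quote only gives $|x|^{|\gamma'|}F(x) \in \ell^2(\Z^d)$, hence existence of the weak derivative $\hat F_{\gamma'}$ as an element of some $L^p(\T^d)$ with $p<\infty$ (Hausdorff--Young, Lemma~\ref{lem:FTbdd}(ii)); it gives no pointwise estimate at all. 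In the cases that motivate the proposition (percolation with $\rho=d-6$, LTLA with $\rho=d-10$) one has $n_d = d-1 \ge 2+\rho$, so the high-order derivatives of $\hat F$ occurring in your Leibniz expansion are \emph{not} bounded functions, the claimed bound on $\nabla^\gamma\hat G$ is unjustified, and the ``one-line power count'' $|k|^{\sigma-2-n_d}$ cannot be run pointwise. This is precisely the obstacle identified in \cite{Slad22_lace}. The same issue infects your treatment of $\hat E$: when more than $2+\sigma$ derivatives fall on $\hat E$ you cannot ``move'' them to another factor --- in a Leibniz expansion the distribution of derivatives is fixed --- and what is actually needed is an $L^r$ bound on $\hat E_\beta$ itself for $|\beta| \ge 2+\sigma$, which again comes from Hausdorff--Young, not from Lemma~\ref{lemma:Ek}.

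The paper's proof avoids pointwise estimates entirely: it writes $\hat f_\alpha$ as a combination of terms \eqref{eq:f_decomp0} built from the ratios $\hat A_{\delta}/\hat A$, $\hat F_{\gamma}/\hat F$, $\hat E_{\alpha_2}/(\hat A\hat F)$, proves $L^q$ membership of each ratio for every $q^{-1}$ exceeding the natural exponent divided by $d$ (Lemma~\ref{lem:AFEbds}, combining the infrared bound with the $L^{d/(|\gamma|-2)}$ bound on $\hat F_\gamma$ from Lemma~\ref{lem:FTbdd}), and then closes the estimate with H\"older's inequality, which simultaneously justifies the use of the weak product and quotient rules (the a posteriori integrability of the right-hand sides is exactly their hypothesis). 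To repair your argument you would need to replace every pointwise bound on $\nabla^{\gamma'}\hat F$ and $\hat E_\beta$ of order $\ge 2+\rho$ (resp.\ $\ge 2+\sigma$) by such $L^p$ bounds and redo the exponent bookkeeping with H\"older, at which point you have essentially reconstructed the paper's proof; your identification of the two thresholds in \eqref{eq:ndub} and of the role of $\rho > \frac{d-8}{2}$ is correct, but the analytic mechanism that makes the count legitimate is missing.
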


\begin{proof}[Proof of Theorem~\ref{thm:gaussian_lemma_int} assuming Proposition~\ref{prop:f_NN}]
Since $\hat f$ is $n_d$ times weakly differentiable by Proposition~\ref{prop:f_NN},
it follows from Lemma~\ref{lem:Graf} that
$|x|^{n_d} f(x) \to 0$ as $\abs x\to \infty$.
The equation $G = \lambda C_\mu + f$ then gives
\begin{align}
G(x) =
\lambda C_{{\mu}}(x) + o\( \frac 1 {\abs x^{n_d}}\)
    \quad \text{as $\abs x \to \infty$},
\end{align}
which completes the proof.
\end{proof}

Recall that $\hat A = 1-\mu\hat\Dnn$.
To prove Proposition~\ref{prop:f_NN},
we must study derivatives of $\hat f = \hat C \hat E \hat G = \hat E / (\hat A \hat F)$.
For this, we would like to use the product and quotient rules of differentiation
to conclude that $\hat f_\alpha$ is given by a linear combination of terms of the form
\begin{align} \label{eq:f_decomp0}
\frac{ \prod_{n=1}^i \hat A_{\delta_n} }{\hat A ^{1+i} }
\hat E_{\alpha_2}
\frac{ \prod_{m=1}^j \hat F_{\gamma_m}  }{\hat F^{1+j} }
=
\( \prod_{n=1}^i \frac{ \hat A_{\delta_n} }{\hat A } \)
\( \frac{ \hat E_{\alpha_2} }{ \hat A \hat F } \)
\( \prod_{m=1}^j \frac{ \hat F_{\gamma_m}  }{ \hat F } \),
\end{align}
where $\alpha = \alpha_1 + \alpha_2 + \alpha_3$, $ 0 \le i \le \abs {\alpha_1}$, $0 \le j \le \abs {\alpha_3 }$, $\sum_{n=1}^i \delta_n = \alpha_1$, and $\sum_{m=1}^j \gamma_m = \alpha_3$. However, the use of calculus rules to compute $\hat f_\alpha$ requires justification.
Weak derivatives satisfy a version of product and quotients rules stated in Appendix~\ref{appendix:weak}.
These imply that if a naive application
of the product and quotient rules produces a product which {\it a posteriori} can
be seen to be integrable via H\"older's inequality, then the
weak derivative indeed exists and is given by the usual product and quotient rules.
The following lemma is what allows us to use H\"older's inequality
in this manner.

\begin{lemma}
\label{lem:AFEbds}
Let $F$ obey Assumption~\ref{ass:F}.
Let $\gamma$ be a multi-index with $ \abs \gamma < (d - 2 + \rho \wedge 2 ) \wedge ( \tfrac 1 2 d + 2 + \rho )$,
and choose $\sigma \in (0, \rho)$ such that $\sigma \le 2$.
Choose $q_1,q_2$ satisfying
\begin{equation}
    \frac{ \abs \gamma } d < q_1\inv
    ,
    \qquad
    \frac{ 2 - \sigma + \abs \gamma } d < q_2\inv
    .
\end{equation}
Then $\hat F, \hat A, \hat E$ are $\abs \gamma$ times weakly differentiable,
and
\begin{align}
\label{eq:FFAA}
\frac{ \hat A_\gamma } { \hat A } , \;
\frac{ \hat F_\gamma } { \hat F }
	\in L^{q_1}(\T^d) ,	\qquad
\frac{ \hat E_{\gamma} }{\hat A \hat F} \in L^{q_2}(\T^d).
\end{align}
\end{lemma}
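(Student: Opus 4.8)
\textbf{Proof plan for Lemma~\ref{lem:AFEbds}.}

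The plan is to treat the three quotients $\hat A_\gamma/\hat A$, $\hat F_\gamma/\hat F$, and $\hat E_\gamma/(\hat A\hat F)$ by the same two-part recipe: first establish that the numerators $\hat F$, $\hat A$, $\hat E$ admit weak derivatives up to order $|\gamma|$, and second bound those weak derivatives pointwise so that, after dividing by the (comparable) denominators, H\"older's inequality in the given $L^{q_i}$ space applies. The existence of the weak derivatives is where the hypothesis $|\gamma| < \frac12 d + 2 + \rho$ enters. For $\hat A = 1 - \mu\hat\Dnn$ this is trivial, since $\hat A$ is smooth (a trigonometric polynomial). For $\hat F(k) = \sum_x F(x)\cos(k\cdot x)$, differentiating $n$ times term-by-term introduces a factor $x^\alpha$ with $|\alpha| = n$, and $|x^\alpha F(x)| \lesssim |x|^{n}/\xvee^{d+2+\rho}$ is summable iff $n < 2+\rho$; beyond that one cannot differentiate classically, but the function $|x|^{|\gamma|}|F(x)|$ is square-summable when $2|\gamma| < d + 4 + 2\rho$, i.e.\ $|\gamma| < \frac12 d + 2 + \rho$, so $\hat F$ lies in the Sobolev space $H^{|\gamma|}(\T^d)$ and has a weak derivative of order $|\gamma|$ (this is the standard Fourier characterization of Sobolev spaces on the torus, recalled in Appendix~\ref{appendix:weak}). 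The identical argument applies to $\hat E$, whose real-space decay $|E(x)| \le K|x|^{-(d+2+\rho)}$ matches that of $F$.

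Next I would record the pointwise bounds near $k=0$, which govern integrability since $\hat A$, $\hat F$ only vanish at the origin (as $\hat A(k) \asymp |k|^2$ by \eqref{eq:IR-SRW}, and $\hat F(k) \ge K_2|k|^2$ by Assumption~\ref{ass:F}, while both are bounded below away from $0$). For the numerators: differentiating $\hat A$ a positive number of times kills the constant and, since $\hat\Dnn$ is a cosine sum whose even-order derivatives at $k=0$ are $O(1)$, one gets $|\hat A_\gamma(k)| \lesssim |k|^{(2-|\gamma|)\vee 0} \lesssim 1$ for $|\gamma|\ge 1$ and $\lesssim |k|^2$ for $\gamma=0$ — in all cases $|\hat A_\gamma(k)| \lesssim |k|^{(2-|\gamma|)\vee 0}$. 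The same holds for $\hat F_\gamma$ up to order $2+\rho$; beyond that, however, one only controls $\hat F_\gamma$ in the $L^2$-sense, not pointwise. The key realization is that this is enough: once $|\gamma| \ge 2$ the target bound $\hat A_\gamma/\hat A, \hat F_\gamma/\hat F \in L^{q_1}$ with $q_1^{-1} > |\gamma|/d$ can be obtained by writing $\hat F_\gamma/\hat F = (\hat F_\gamma \cdot |k|^{-|\gamma|+2}) \cdot (|k|^{|\gamma|-2}/\hat F)$ — wait, more cleanly: split off the $L^2$ numerator and the singular denominator via H\"older. The denominator $1/\hat F \lesssim |k|^{-2}$ lies in $L^r$ for $r < d/2$, and $\hat F_\gamma \in L^2$; by H\"older, $\hat F_\gamma/\hat F \in L^{q_1}$ whenever $q_1^{-1} \ge \frac12 + \frac2d \cdot$ (something) — the bookkeeping here must be arranged so that the exponent condition $q_1^{-1} > |\gamma|/d$ in the statement is exactly what makes it work. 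For $\hat E_\gamma$ the advantage is its extra decay: Lemma~\ref{lemma:Ek} (applied to $E = E_{\lambda,\mu}$, which by \eqref{eq:E_condition} has vanishing zeroth and second moments and satisfies the same decay bound as $F$) gives $|\hat E_\gamma(k)| \lesssim |k|^{2+\sigma-|\gamma|}$ whenever $|\gamma| < 2+\sigma$; combined with $|\hat A\hat F|^{-1} \lesssim |k|^{-4}$ this yields $|\hat E_\gamma/(\hat A\hat F)| \lesssim |k|^{\sigma - 2 - |\gamma|}$, which lies in $L^{q_2}$ precisely when $q_2(|\gamma| + 2 - \sigma) < d$, i.e.\ $q_2^{-1} > (2-\sigma+|\gamma|)/d$ as assumed. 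When $|\gamma| \ge 2+\sigma$ one again falls back on the $L^2$-bound for $\hat E_\gamma$ and H\"older against $1/(\hat A\hat F)$, but this last regime is where I'd need to be careful: the condition $|\gamma| < d-2+\rho\wedge 2$ in the hypothesis must be shown to guarantee, together with $\sigma$ chosen close enough to $\rho\wedge 2$, that the H\"older exponents close up.

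The main obstacle I anticipate is exactly this interplay of the two regimes $|\gamma| < 2+\sigma$ and $|\gamma| \ge 2+\sigma$ for $\hat E_\gamma$ (and analogously $|\gamma| < 2$ vs.\ $|\gamma| \ge 2+\rho$ for $\hat F_\gamma$): in the low-order regime one has clean pointwise bounds, while in the high-order regime one has only $L^2$ control from the Sobolev embedding, and the two must be patched so that the single clean exponent conditions stated in the lemma suffice uniformly in $|\gamma|$. Concretely, for $\hat F_\gamma/\hat F$ with $|\gamma| \ge 2$ I'd want to decompose the torus into $\{|k| \le \epsilon\}$ and its complement, use the $L^2$-Sobolev bound on $\hat F_\gamma$ and $1/\hat F \lesssim |k|^{-2}$ near $0$ with H\"older, checking $\frac12 \cdot 1 + \frac12 \cdot$(measure-weighted $|k|^{-2q}$ integrability) matches $q_1^{-1} > |\gamma|/d$; away from $0$ the denominator is harmless and only the $L^2$ numerator matters, which is automatically in every $L^{q_1}$ with $q_1 \le 2$ — and the hypothesis $|\gamma| < d-2+\rho\wedge 2 \le d$ forces $q_1 > 1$, fine, but whether $q_1 \le 2$ is automatic needs a one-line check. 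Once the H\"older exponents are verified to close in both regimes, assembling \eqref{eq:FFAA} is immediate.
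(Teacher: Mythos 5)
Your setup is on the right track: the existence of the weak derivatives via square-summability of $|x|^{|\gamma|}F(x)$ for $|\gamma|<\tfrac12 d+2+\rho$ (the paper's Lemma~\ref{lemma:weak_Fourier}), the trivial treatment of $\hat A$, the Taylor/symmetry bounds for $|\gamma|=1$, and the use of Lemma~\ref{lemma:Ek} plus the infrared bounds in the regime $|\gamma|<2+\sigma$ all match the paper's argument. The genuine gap is in the high-order regime ($|\gamma|\ge 2+\rho$ for $\hat F_\gamma$, $|\gamma|\ge 2+\sigma$ for $\hat E_\gamma$), where you propose to rely on ``only $L^2$ control'' of the top-order derivative and hope the H\"older bookkeeping ``must be arranged'' to give $q_1^{-1}>|\gamma|/d$. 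It cannot: with $\hat F_\gamma\in L^2$ and $1/\hat F\lesssim |k|^{-2}\in L^p$ only for $p^{-1}>2/d$, H\"older yields membership in $L^{q_1}$ only for $q_1^{-1}>\tfrac12+\tfrac2d$, which is strictly weaker than the claimed $q_1^{-1}>|\gamma|/d$ whenever $|\gamma|<\tfrac12 d+2$ --- a range the lemma's hypotheses allow (e.g.\ $d=11$, $\rho$ slightly above $\tfrac32$, $|\gamma|=4$: the lemma asserts $\hat F_\gamma/\hat F\in L^{q_1}$ for $q_1$ up to nearly $11/4>2$, which you cannot reach from $L^2$ control of the numerator even away from $k=0$). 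Splitting the torus into $\{|k|\le\eps\}$ and its complement does not repair this, since the loss is in the integrability exponent of the numerator itself, not in the location of the singularity.

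The missing ingredient is the Hausdorff--Young step (the paper's Lemma~\ref{lem:FTbdd}(ii)). Set $h(x)=x^\gamma F(x)$, which decays like $\xvee^{-b}$ with $b=d+2+\rho-|\gamma|$. When $2+\rho\le|\gamma|<\tfrac12 d+2+\rho$ one has $b\in(\tfrac d2,d]$, so $h\in\ell^p$ for every $p>d/b$, and since $b>\tfrac d2$ one may take $p<2$; Hausdorff--Young then gives $\hat F_\gamma\in L^r$ for all $r<d/(d-b)=d/(|\gamma|-2-\rho)$, which is strictly better than $L^2$ exactly when $|\gamma|<\tfrac12 d+2$. Combining with the easy subcase $2\le|\gamma|<2+\rho$ (where $\hat F_\gamma\in L^\infty$) yields $\hat F_\gamma\in L^{d/(|\gamma|-2)}$, and then H\"older against $|k|^{-2}\in L^p$, $p^{-1}>2/d$, closes to $q_1^{-1}>|\gamma|/d$. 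The same interpolation, with $\sigma<\rho$ used to merge the subcases, gives $\hat E_\gamma\in L^r$ for $r^{-1}>(|\gamma|-2-\sigma)/d$ and hence the $q_2$ claim after pairing with $|\hat A\hat F|^{-1}\lesssim|k|^{-4}\in L^p$, $p^{-1}>4/d$. Without this upgrade of the numerator's integrability beyond $L^2$, your exponents do not close and the lemma as stated is not proved.
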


\begin{proof}[Proof of Proposition~\ref{prop:f_NN} assuming Lemma~\ref{lem:AFEbds}]
Let $\abs \alpha \le n_d$,
and pick some $\sigma \in (0, \rho \wedge 2)$.
We use the quotient and product rules (Lemmas~\ref{lemma:product_rule} and \ref{lemma:quotient_rule}) to calculate $\hat f_\alpha$, and we use Lemma~\ref{lem:AFEbds} to verify the hypotheses of the rules.
In order to apply Lemma~\ref{lem:AFEbds} with $n_d$ derivatives
we require that
\begin{equation}
\label{eq:ndub-bis}
    n_d < (d - 2 + \rho \wedge 2 ) \wedge ( \tfrac 1 2 d + 2 + \rho ),
\end{equation}
which is guaranteed by \eqref{eq:ndub}. 

We need to verify that all terms of the form \eqref{eq:f_decomp0} are integrable to conclude that $\hat f_\alpha$ exists and is given by a linear combination of \eqref{eq:f_decomp0}.
By Lemma~\ref{lem:AFEbds} and 
H\"older's inequality, the $L^r$ norm of \eqref{eq:f_decomp0} is bounded by
\begin{align}
\( \prod_{n=1}^i \Bignorm{ \frac{ \hat A_{\delta_n} }{\hat A } }_{r_n} \)
\Bignorm{ \frac{ \hat E_{\alpha_2} }{ \hat A \hat F }  }_q
\( \prod_{m=1}^j \Bignorm{ \frac{ \hat F_{\gamma_m}  }{ \hat F } }_{q_m} \),
\end{align}
where $r$ can be any positive number that satisfies
\begin{align}
\frac 1 r &= \( \sum_{n=1}^i \frac 1 {r_n} \) + \frac 1 q + \( \sum_{m=1}^j \frac 1 {q_m} \) \nl
&> \frac{ \sum_{n=1}^i \abs{\delta_n}  } d  + \frac{ 2 - \sigma + \abs{\alpha_2}   } d + \frac{ \sum_{m=1}^j \abs{\gamma_m}  } d
= \frac{  \abs \alpha  + 2 - \sigma } d.
\end{align}
Since $\sigma < \rho \wedge 2$ is arbitrary, $\eqref{eq:f_decomp0}$ is in $L^r$ for all
$r\inv > (\abs \alpha + 2 - \rho \wedge 2 )/d$.
We can take $r=1$ because $\abs \alpha \le n_d < d - 2 + \rho \wedge 2$ by \eqref{eq:ndub-bis}.
This proves that
the weak derivative $f_\alpha$ exists
and that
\begin{align} \label{eq:f_alpha}
\hat f_\alpha \in L^r( \T^d )
	\qquad (r\inv > \frac{ \abs \alpha + 2 - \rho \wedge 2}d ).
\end{align}
This completes the proof.
\end{proof}

\subsubsection{Proof of Lemma~\ref{lem:AFEbds}}
\label{sec:nn-pf-weaker}

To complete the proof of Theorem~\ref{thm:gaussian_lemma_int}, it
remains to prove Lemma~\ref{lem:AFEbds}.
The proof uses the following elementary lemma.

\begin{lemma}
\label{lem:FTbdd}
Let $h: \Z^d \to \R$ obey $|h(x)| \le K \nnnorm x^{-b}$ for some $K,b>0$.
\begin{enumerate}[label=(\roman*)]
\item
If $b>d$ then $h \in \ell^1(\Z^d)$, $\hat h \in L^\infty(\T^d)$, and $\norm{ \hat h }_\infty \le c_{d,b} K$.

\item
If $b\le d$ then $h \in \ell^p(\Z^d)$ for $p> \frac d b$.
If also $\frac d2 < b \le d$ then $\hat h \in L^q(\T^d)$
and $\norm{ \hat h } _q \le c_{d,b,q}K$  for all $1 \le q < \frac d {d-b}$.
\end{enumerate}
\end{lemma}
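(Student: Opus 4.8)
The plan is to deduce both parts from two standard facts: an elementary shell-counting estimate and the Hausdorff--Young inequality. The shell estimate is that for any $c>d$ one has $\sum_{x\in\Z^d}\xvee^{-c}=c_{d,c}<\infty$, since the number of $x\in\Z^d$ with $|x|\in(n,n+1]$ is $O(n^{d-1})$ and hence the sum is at most $O(1)+\sum_{n\ge1}O(n^{d-1})\,n^{-c}$, which converges when $c>d$. Part~(i) follows immediately: if $b>d$ then $\norm{h}_{\ell^1(\Z^d)}\le K\sum_x\xvee^{-b}=c_{d,b}K<\infty$, so $h\in\ell^1(\Z^d)$, and since $|\hat h(k)|\le\sum_x|h(x)|$ for every $k\in\T^d$ we obtain $\hat h\in L^\infty(\T^d)$ with $\norm{\hat h}_\infty\le c_{d,b}K$. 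The first assertion of part~(ii) is the same computation: if $p>d/b$ then $pb>d$, so $\sum_x|h(x)|^p\le K^p\sum_x\xvee^{-pb}=c_{d,pb}K^p<\infty$, \ie, $h\in\ell^p(\Z^d)$.

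For the Fourier bound in part~(ii), fix $b$ with $d/2<b\le d$ and $q$ with $1\le q<\tfrac{d}{d-b}$. I would first recall that, with the normalisations in force ($\Z^d$ carrying counting measure and $\T^d$ carrying $dk/(2\pi)^d$, so that $\T^d$ is a probability space), the Hausdorff--Young inequality reads $\norm{\hat h}_{L^{p'}(\T^d)}\le\norm{h}_{\ell^p(\Z^d)}$ for $1\le p\le2$; it follows by Riesz--Thorin interpolation from the trivial bound $\norm{\hat h}_\infty\le\norm{h}_{\ell^1}$ and Plancherel's identity $\norm{\hat h}_{L^2}=\norm{h}_{\ell^2}$. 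Because $L^r(\T^d)$ norms are nondecreasing in $r$ and $2<\tfrac{d}{d-b}$ exactly when $b>d/2$, it suffices to treat $q\ge2$, the case $q<2$ then following from the $q=2$ estimate $\norm{\hat h}_2=\norm{h}_{\ell^2}\le c_{d,b}K$. For $q\ge2$ set $p=q'$. The identity $q<\tfrac{d}{d-b}\iff q'>\tfrac db$, together with $q\ge2\iff p\le2$, places $p$ in the interval $(d/b,2]$; by the first assertion of part~(ii), $h\in\ell^p(\Z^d)$ with $\norm{h}_{\ell^p}\le c_{d,b,q}K$, and Hausdorff--Young yields $\norm{\hat h}_q=\norm{\hat h}_{p'}\le\norm{h}_{\ell^p}\le c_{d,b,q}K$.

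I expect no genuine obstacle. The only non-elementary input is Hausdorff--Young, and the one place requiring care is the exponent bookkeeping: the equivalence $q<\tfrac{d}{d-b}\iff q'>\tfrac db$ is precisely what makes the dual exponent $p=q'$ land just above the critical index $d/b$ for $\ell^p$-summability of a sequence decaying like $\xvee^{-b}$, so that the shell estimate applies. Everything else is routine.
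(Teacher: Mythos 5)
Your proposal is correct and follows essentially the same route as the paper: summability of $\nnnorm{x}^{-c}$ for $c>d$ gives part (i) and the $\ell^p$ claim, and the $L^q$ bound comes from Hausdorff--Young on the dual exponent together with monotonicity of $L^q(\T^d)$ norms to cover $1\le q<2$. The extra details you supply (the interpolation proof of Hausdorff--Young and the explicit equivalence $q<\tfrac{d}{d-b}\iff q'>\tfrac{d}{b}$) are consistent with, and merely expand on, the paper's argument.
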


\begin{proof}
(i) This follows from the summability of $\nnnorm x^{-b}$.

\smallskip\noindent
(ii)
The fact that $h \in \ell^p(\Z^d)$ for $p> \frac d b$ follows from the summability
of $\nnnorm x^{-pb}$ for $pb>d$.
For the second claim,
the condition $b > \frac d 2$ ensures that $h \in \ell^2(\Z^d)$,
so that we can use the $L^2$ Fourier transform.
By the Hausdorff--Young inequality (see, e.g., \cite[Theorem~4.28]{Foll15}),
the Fourier transform is a bounded linear map from $\ell^p(\Z^d)$ to $L^q(\T^d)$
when $p \in [1,2]$ and $p \inv + q \inv = 1$.
In particular,
we get $\hat h \in L^q(\T^d)$ with  $2 \le q<\frac{d}{d-b}$.
The extension to
$1 \le q<\frac{d}{d-b}$ follows from monotonicity of the $L^q(\T^d)$ norm in $q$.
\end{proof}

\begin{proof}[Proof of Lemma~\ref{lem:AFEbds}]
\emph{Bound on $\hat A_\gamma / \hat A$.}
Let $\gamma$ be any multi-index.
By definition, $\hat A=1-\mu\hat\Dnn$, so $\hat A$
is infinitely classically differentiable.
There is nothing to prove for $\abs \gamma=0$ since the ratio is then $1$.
If $\abs \gamma = 1$, by Taylor's theorem and symmetry, we have $\abs { \hat A_\gamma (k) } \lesssim \abs k$. If $\abs \gamma \ge 2$, Taylor's theorem gives $\abs { \hat A_\gamma (k) } \lesssim 1$ instead.
It then follows from the infrared bound \eqref{eq:IR-SRW} that
\begin{align} \label{eq:AA_k}
\Bigabs{ \frac{ \hat A_\gamma }{ \hat A } (k) }
\lesssim  \abs k^{-(\abs \gamma \wedge 2)}
\in L^q(\T^d)	\qquad (q\inv > \frac{ \abs \gamma \wedge 2 } d),
\end{align}
which is stronger than the desired \eqref{eq:FFAA}.

\medskip\noindent
\emph{Bound on $\hat F_\gamma / \hat F$.}
Let $ \abs \gamma < (d-2+\rho) \wedge (\tfrac 1 2 d  + 2 +\rho)$.
There is again nothing to prove for $\abs \gamma =0$.
If $\abs \gamma = 1$, by Taylor's theorem and symmetry, and since $\sum_{x\in\Z^d} |x|^2|F(x)|$ is finite, we have $\abs { \hat F_\gamma (k) } \lesssim \abs k$.
It then follows from the infrared bound that
$\abs{ \hat F_\gamma (k)/ \hat F  (k) } \lesssim \abs k \inv  \in L^q$ for $q\inv > 1/d$.

It remains to consider the case $\abs\gamma \ge 2$.
By the decay hypothesis on $F(x)$,
the condition $\abs \gamma < \tfrac 1 2 d  + 2 +\rho$ precisely ensures that $\abs x^{ \abs \gamma } F(x) \in \ell^2(\Z^d)$,
so that Lemma~\ref{lemma:weak_Fourier} applies and
shows that $\hat F$ is $\abs \gamma$ times weakly differentiable.
We next estimate $\hat F_\gamma$
using Lemma~\ref{lem:FTbdd} with $h(x) = x^\gamma F(x)$ and
\begin{align} \label{eq:bgood}
b=- \abs \gamma +d+2+\rho .
\end{align}
If $2 \le \abs \gamma <2+\rho$ then $b > d$, and Lemma~\ref{lem:FTbdd}(i) gives $\hat F_\gamma  \in L^\infty$.
On the other hand,
if $2+\rho \le \abs \gamma < \frac 1 2 d + 2 + \rho$ then $b \in ( \frac d 2, d]$,
and Lemma~\ref{lem:FTbdd}(ii) gives $\hat F_\gamma \in L^r$ whenever $ r \inv > (\abs \gamma - 2 - \rho)/d$.
Since $\rho>0$,
we can combine the two subcases to get
\begin{align}
\label{eq:F_gamma}
\hat F_\gamma \in L^{d/(\abs \gamma - 2)}(\T^d) \qquad (2 \le \abs \gamma < \tfrac 1 2 d + 2 +\rho ).
\end{align}
Then,
by the infrared bound, it follows from H\"older's inequality that
\begin{equation}
    \Bignorm{ \frac{ \hat F_{\gamma}  }{ \hat F } }_{q}
    \lesssim \Bignorm{ \frac{1}{|k|^2} }_p
    \norm{\hat F_{\gamma}  }_{\frac{d}{|\gamma|-2}}
    < \infty
\end{equation}
provided that $q^{-1} = p^{-1}+  \frac{|\gamma|-2}{d} > \frac 2d +\frac{|\gamma|-2}{d}
=
\frac{|\gamma|}{d}$. This gives the desired result.

\medskip\noindent
\emph{Bound on $\hat E_\gamma / \hat A \hat F$.}
Let $ \abs \gamma <  (d-2+\rho) \wedge (\tfrac 1 2 d  + 2 +\rho)$.
We divide into two cases, now according to how $\abs \gamma$ compares with $2 + \sigma$.
We use the fact that $E = A - \lambda F$ has the same $|x|^{-(d+2+\rho)}$ decay as $F$.

If $\abs \gamma < 2 + \sigma$, Lemma~\ref{lemma:Ek}
and the infrared bounds give $\abs{ \hat E_\gamma / (\hat A \hat F) (k)}
\lesssim \abs k^{2 + \sigma - \abs \gamma - 4} = \abs k^{-(2 - \sigma + \abs \gamma)}$,
which is in $L^q$ for $q\inv > (2 - \sigma  + \abs \gamma)/d$.

If $2 + \sigma \le \abs \gamma < \half d + 2 + \rho$, as in the proof for $\hat F_\gamma /\hat F$, from the decay of $E(x)$ we get $\hat E_\gamma \in L^\infty$ when $ 2 + \sigma \le \abs \gamma < 2 + \rho$, and $\hat E_\gamma \in L^r$ for $r\inv > (\abs \gamma - 2 - \rho)/d$ when $ 2 + \rho \le \abs \gamma < \half d + 2 + \rho$.
Now we use $\sigma < \rho$ to combine the two subcases as
\begin{align}
\hat E_\gamma \in L^r(\T^d) 	\qquad (r\inv > \frac{ \abs \gamma - 2 - \sigma }d ).
\end{align}
The desired result then follows from H\"older's inequality and the infrared bound $ \abs{ (\hat A \hat F)\inv (k) } \lesssim \abs k^{-4} \in
L^p$ for all $p\inv > 4/d$.
\end{proof}

For later use, we note that the reasoning
used to bound $\hat E_\gamma / \hat A \hat F$ gives,
for $\sigma \le \abs \gamma < \half d + 2 + \rho$,
\begin{align} \label{eq:EAEF}
\frac{ \hat E_{\gamma} }{ \hat A},\; \frac{ \hat E_{\gamma} }{\hat F } 	
\in L^q(\T^d)
	\qquad  (q\inv > \frac{ \abs{\gamma} - \sigma   }d).
\end{align}

\begin{remark} \label{rk:rho_restriction}
In the paragraph containing \eqref{eq:bgood}, we used
$\abs x^{\abs \gamma} F(x) \in \ell^2(\Z^d)$ in order to invoke boundedness
of the $L^2$ Fourier transform (Lemma~\ref{lemma:weak_Fourier}).
In the proof of Proposition~\ref{prop:f_NN} we need this for $|\gamma|=n_d \ge d-2$.
Our unnatural condition $\rho>\frac{d-8}{2}$ is present  
to make
\begin{align}
d-2 < \tfrac 12 d + 2 + \rho,
\end{align}
so that the inequality \eqref{eq:ndub},
which ensures that $|\gamma|=n_d$ obeys the hypothesis of Lemma~\ref{lem:AFEbds},
can be satisfied.
\end{remark}

\subsection{Improved error estimate for Theorem~\ref{thm:gaussian_lemma}}
\label{section:proof_frac}

We now improve the error estimates of Section~\ref{sec:nn-weak} using
a notion of
``fractional derivative.''

\subsubsection{Fractional derivatives}
\label{sec:fd}

There are various notions of fractional derivatives in the literature.
Given $\epsilon \in (0,1)$, the fractional derivative of a power series
$f(z) = \sum_{n=0}^\infty a_nz^n$ was defined in
\cite[Section~3.2]{HS92a} to be $\sum_{n=0}^\infty n^\epsilon a_n z^n$, and
an integral representation for the fractional derivative was presented.
The integral representation provides a way to estimate the fractional
derivative using only the standard first derivative.

Our error bound in Theorem~\ref{thm:gaussian_lemma} requires a constant
upper bound on $|x|^{d-2+s} f(x)$, and when $d-2+s$ is not an integer the
method of Section~\ref{sec:nn-weak}
bounds only $\abs x^{d-2 + \floor s} f(x)$.
To improve,
we use an integral representation for
the Fourier transform of $({\rm sgn}\,x_1) |x_1|^\delta g(x)$,
where $\delta = s - \floor s$ and $g(x) = \abs x^{d-2 - \floor s} f(x)$.

We begin with an integral formula for a fractional power of $t \ge 0$
that can be proved by a change of variables, namely
that for $\delta \in (0,1)$,
\begin{align}
\label{eq:tdelta}
    t^\delta =
    \frac{1}{c_\delta} \int_0^\infty \frac{ \sin (tu) }{ u^{1+\delta}} du
    \quad
    \text{with}
    \quad
c_\delta = \int_0^\infty \frac{ \sin u }{ u^{1+\delta}} du \in (0, \infty).
\end{align}
The restriction that $\delta \in (0,1)$ ensures integrability in \eqref{eq:tdelta}.
Given a function $\hat g: \T^d \to \C$, we regard it as a periodic function on $\R^d$ and define
\begin{equation}
    (U_u\hat g)(k) = \hat g(k+\uvec) - \hat g(k-\uvec)
\end{equation}
with $k \in \R^d$ and $\uvec = (u, 0, \dots, 0) \in \R^d$.
The following lemma provides an integral representation for the Fourier
transform of $({\rm sgn}\,x_1) |x_1|^\delta g(x)$.

\begin{lemma}
\label{lem:fracder}
Let $\delta \in (0,1)$.
Suppose that $\hat g \in L^1(\T^d)$ and that $\int_0^\infty u^{-(1+\delta)}\|U_u\hat g\|_1 du <
\infty$.  Let
\begin{equation} \label{eq:Uu_int}
    \hat w(k) = \frac{1}{2ic_\delta} \int_0^\infty \frac{1}{u^{1+\delta}}(U_u\hat g)(k) du.
\end{equation}
Then the inverse Fourier transform of $\hat w(k)$ is $w(x)=({\rm sgn}\, x_1)|x_1|^\delta g(x)$,
where $g$ denotes the inverse Fourier transform of $\hat g$.
\end{lemma}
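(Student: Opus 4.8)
The plan is a direct Fubini computation, with the finiteness hypothesis doing all the work. First I would check that $\hat w$ is well defined and lies in $L^1(\T^d)$. By Tonelli's theorem,
\[
\int_{\T^d}\Bigl(\int_0^\infty \frac{\abs{(U_u\hat g)(k)}}{u^{1+\delta}}\,du\Bigr)\frac{dk}{(2\pi)^d}
= \int_0^\infty \frac{\norm{U_u\hat g}_1}{u^{1+\delta}}\,du < \infty
\]
by assumption, so the inner integral in \eqref{eq:Uu_int} converges absolutely for a.e.\ $k$, and $\norm{\hat w}_1 \le \frac{1}{2c_\delta}\int_0^\infty u^{-(1+\delta)}\norm{U_u\hat g}_1\,du < \infty$. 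In particular $w := \mathcal{F}^{-1}\hat w \in \ell^\infty(\Z^d)$ is defined in the usual sense, and since $\hat g \in L^1(\T^d)$, the function $g = \mathcal{F}^{-1}\hat g$ is a well-defined bounded function, so the claimed formula makes sense pointwise.

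Next I would unfold $w(x) = \int_{\T^d}\hat w(k)e^{-ik\cdot x}\,\frac{dk}{(2\pi)^d}$, insert the definition of $\hat w$, and use the same bound to justify Fubini's theorem, interchanging the $u$- and $k$-integrals to get
\[
w(x) = \frac{1}{2ic_\delta}\int_0^\infty \frac{1}{u^{1+\delta}}\Bigl(\int_{\T^d}(U_u\hat g)(k)\,e^{-ik\cdot x}\,\frac{dk}{(2\pi)^d}\Bigr)\,du .
\]
The inner integral is the inverse Fourier transform of $U_u\hat g$ at $x$. Because $\hat g$ is $2\pi$-periodic in each coordinate and $e^{-ik\cdot x}$ is as well for $x\in\Z^d$, shifting the integration domain by $\pm\uvec$ gives $\mathcal{F}^{-1}[\hat g(\cdot+\uvec)](x) = e^{iux_1}g(x)$ and $\mathcal{F}^{-1}[\hat g(\cdot-\uvec)](x) = e^{-iux_1}g(x)$, hence the inner integral equals $(e^{iux_1}-e^{-iux_1})g(x) = 2i\sin(ux_1)\,g(x)$. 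Substituting this back yields $w(x) = \frac{g(x)}{c_\delta}\int_0^\infty u^{-(1+\delta)}\sin(ux_1)\,du$.

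Finally I would evaluate this last integral using \eqref{eq:tdelta}, tracking the sign of $x_1$: for $x_1>0$, \eqref{eq:tdelta} with $t=x_1$ gives the value $c_\delta x_1^\delta$; for $x_1<0$, oddness of $\sin$ gives $-c_\delta\abs{x_1}^\delta$; and for $x_1=0$ the integrand vanishes. In all cases $\int_0^\infty u^{-(1+\delta)}\sin(ux_1)\,du = c_\delta\,(\mathrm{sgn}\,x_1)\abs{x_1}^\delta$, so $w(x) = (\mathrm{sgn}\,x_1)\abs{x_1}^\delta g(x)$, as claimed. The only step that requires genuine care is the interchange of integrals, and that is exactly what the hypothesis $\int_0^\infty u^{-(1+\delta)}\norm{U_u\hat g}_1\,du<\infty$ is tailored to provide; the remaining work is the elementary domain-shift identity and the sign bookkeeping forced by the fact that \eqref{eq:tdelta} holds only for $t\ge 0$.
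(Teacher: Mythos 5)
Your proposal is correct and follows essentially the same route as the paper's proof: justify $\hat w\in L^1(\T^d)$ and the interchange of the $u$- and $k$-integrals by the finiteness hypothesis, evaluate the inner integral by the periodic shift of variables to get $2i\sin(ux_1)g(x)$, and then apply \eqref{eq:tdelta} with the sign of $x_1$ tracked. The only difference is that you spell out the Tonelli step and the sign bookkeeping in slightly more detail than the paper does.
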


\begin{proof} 
By hypothesis, $\hat w \in L^1(\T^d)$, so its inverse Fourier transform is
\begin{align}
    w(x) & = \int_{\T^d} \hat w(k) e^{-ik\cdot x} \frac{dk}{(2\pi)^d}
    =
    \frac{1}{2ic_\delta} \int_0^\infty \frac{du}{u^{1+\delta}}
    \int_{\T^d} \frac{dk}{(2\pi)^d} (U_u\hat g)(k)e^{-ik\cdot x},
\end{align}
where the application of Fubini's Theorem is justified by hypothesis.
A change of variables to evaluate the $k$-integral gives
\begin{align}
    w(x) & =
    \frac{1}{c_\delta} g(x) \int_0^\infty \frac{du}{u^{1+\delta}}
    \sin(ux_1),
\end{align}
and \eqref{eq:tdelta} then gives $w(x) = ({\rm sgn}\, x_1)|x_1|^\delta g(x)$.
\end{proof}

Integral representations similar to but not identical
to \eqref{eq:Uu_int}   
have been employed previously,
\eg, in \cite[Section~4.2]{CS09a} and \cite[Section~3]{DPV12}.
In the latter reference, the fractional Laplace operator on $\R^d$ is defined, for $\beta \in (0,1)$ and $\hat g (\cdot)$ a Schwartz function, by
\begin{align} \label{eq:fractional_laplace}
(-\Delta)^\beta \hat g(k) = - \frac 1 {2 c'_\beta} \int_{\R^d} \frac{ \hat g(k + u) - 2 \hat g(k) + \hat g( k - u ) } { \abs{ u }^{d + 2\beta } } du ,
	\qquad c'_\beta = \int_{\R^d} \frac{ 1 - \cos(u_1) } { \abs{ u }^{d + 2\beta } } du .
\end{align}
We have used the first-order difference operator $U_u$ in \eqref{eq:Uu_int}, instead of the second-order difference operator in \eqref{eq:fractional_laplace}.
Also, our variable $u$ in \eqref{eq:Uu_int} is one-dimensional as we are only modifying  the first coordinate direction,
while the fractional Laplacian corresponds to $\abs x^{2\beta}$.

The following lemma improves Lemma~\ref{lem:Graf} by a fractional power of $\xvee$.

\begin{lemma}
\label{lemma:v_bound_frac}
Let $d >0$ and let $h: \Z^d \to \C$ be
the inverse Fourier transform of $\hat h$.
Suppose that $h$ is $\Z^d$-symmetric,
that $a>0$ is not an integer, and that
$\hat h$ is $\lfloor a \rfloor$ times weakly differentiable.
Suppose also that, for some $\eta \in ( a - \floor a, 1)$
and
for all $\abs \alpha =\lfloor a \rfloor$,
there is a $K_\alpha >0$ such that
\begin{align}
\label{eq:Uv_alpha}
\norm{ U_u \hat h_\alpha }_{1} \le u^\eta K_\alpha  \qquad (0\le u\le1).
\end{align}
Then there is a constant $c_{d,a,\eta}$ depending only on $d$, $a$, and $\eta$ such that
\begin{align}
\label{eq:v_bound_frac}
    \abs{ h(x) }
    \le
    c_{d,a,\eta} \frac 1 {\xvee^{a}} \Big(\|\hat h\|_1+
    \max_{\abs \alpha = \floor a} ( K_\alpha + \norm{ \hat h_\alpha }_{1} )
    \Big).
\end{align}
\end{lemma}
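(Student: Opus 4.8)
\textbf{Proof plan for Lemma~\ref{lemma:v_bound_frac}.}
The plan is to reduce the fractional bound to an integer-power bound of the type already established in Lemma~\ref{lem:Graf}, by using Lemma~\ref{lem:fracder} to manufacture a new function with one extra (fractional) power of decay. Write $n = \floor a$ and $\delta = a - n \in (0,1)$, and fix the distinguished coordinate direction $x_1$. The first step is to define, for each multi-index $\alpha$ with $|\alpha| = n$, the function $g^{(\alpha)}(x) = x^\alpha h(x)$, whose inverse Fourier transform is (a constant multiple of) $\hat h_\alpha$, and to apply Lemma~\ref{lem:fracder} with $\hat g = \hat h_\alpha$. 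The hypothesis $\int_0^\infty u^{-(1+\delta)}\|U_u\hat h_\alpha\|_1\,du < \infty$ needed for Lemma~\ref{lem:fracder} must be checked: split the integral at $u=1$; on $(0,1]$ use \eqref{eq:Uv_alpha}, which gives $u^{-(1+\delta)}\|U_u\hat h_\alpha\|_1 \le u^{\eta-1-\delta}K_\alpha$, integrable since $\eta > \delta$; on $(1,\infty)$ use the crude bound $\|U_u\hat h_\alpha\|_1 \le 2\|\hat h_\alpha\|_1$ together with $\int_1^\infty u^{-(1+\delta)}du < \infty$. This produces $\hat w^{(\alpha)} \in L^1(\T^d)$ whose inverse Fourier transform is $w^{(\alpha)}(x) = (\mathrm{sgn}\,x_1)|x_1|^\delta x^\alpha h(x)$, with $\|\hat w^{(\alpha)}\|_1 \lesssim K_\alpha + \|\hat h_\alpha\|_1$.

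The second step is to convert these $L^1$ bounds on Fourier transforms into a pointwise bound on $h$. From the Riemann--Lebesgue direction one only gets $o(1)$, so instead I would argue directly: for the $|\alpha|=n$ pieces, $|w^{(\alpha)}(x)| \le \|\hat w^{(\alpha)}\|_1 \lesssim K_\alpha + \|\hat h_\alpha\|_1$, which says $|x_1|^\delta |x^\alpha|\,|h(x)| \lesssim K_\alpha + \|\hat h_\alpha\|_1$; also $|h(x)| \le \|\hat h\|_1$ trivially. Now one needs an elementary geometric observation: by $\Z^d$-symmetry of $h$ it suffices to bound $|h(x)|$ at points $x$ with $x_1 = \max_i |x_i| \ge |x|/\sqrt d$, so that $|x_1| \ge \xvee/(2\sqrt d)$ once $|x|\ge 1$; choosing $\alpha = (n,0,\dots,0)$ gives $|x_1|^{n+\delta}|h(x)| \lesssim K_\alpha + \|\hat h_\alpha\|_1$, hence $|h(x)| \lesssim \xvee^{-(n+\delta)}(K_\alpha + \|\hat h_\alpha\|_1) = \xvee^{-a}(K_\alpha + \|\hat h_\alpha\|_1)$. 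Combining with the trivial bound $|h(x)|\le\|\hat h\|_1$ to handle $|x|\le 1$, and maximizing over $\alpha$, yields \eqref{eq:v_bound_frac}.

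Alternatively, and perhaps more cleanly, one can feed $\hat w^{(\alpha)}$ directly into Lemma~\ref{lem:Graf}: $w^{(\alpha)}$ is then seen to decay like $\xvee^{-n}$ times $\max(K_\alpha,\|\hat h_\alpha\|_1)$ once one also controls the $n$-th derivatives of $\hat w^{(\alpha)}$, but that route requires differentiating under the $u$-integral and is messier, so I prefer the direct argument above, which only uses that the inverse Fourier transform maps $L^1(\T^d)$ into $\ell^\infty(\Z^d)$ with norm $1$. The main obstacle is the bookkeeping at small $u$: one must ensure the hypothesis \eqref{eq:Uv_alpha} (with exponent $\eta$ strictly larger than the fractional part $a-\floor a$) is exactly what makes the integral defining $\hat w^{(\alpha)}$ converge, and then that the resulting $L^1$ norm is controlled by $K_\alpha + \|\hat h_\alpha\|_1$ with constants depending only on $d,a,\eta$ — this is where the constant $c_\delta$ from \eqref{eq:tdelta} and the split of the $u$-integral enter. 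Everything else is the elementary inequality $|h| \le \|\hat h\|_1$ plus the symmetry reduction to $|x_1| \gtrsim |x|$.
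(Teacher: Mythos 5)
Your proposal is correct and follows essentially the same route as the paper: apply Lemma~\ref{lem:fracder} to $\hat g = \hat h_\alpha$, verify its integrability hypothesis by splitting the $u$-integral at $u=1$ (using \eqref{eq:Uv_alpha} with $\eta>\delta$ on $(0,1]$ and the crude bound $\|U_u\hat h_\alpha\|_1\le 2\|\hat h_\alpha\|_1$ beyond), bound $|x_1|^\delta|x^\alpha h(x)|$ by $\|\hat w\|_1\lesssim K_\alpha+\|\hat h_\alpha\|_1$, and then use the $\Z^d$-symmetry to reduce to a coordinate with $|x_j|\gtrsim|x|$ together with $\alpha = \floor a\, e_j$ and the trivial bound $|h|\le\|\hat h\|_1$ for small $|x|$. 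This matches the paper's argument step for step, so no further comment is needed.
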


\begin{proof}
We write $\delta = a - \floor a \in ( 0,\eta )$.
For $x=0$, there is nothing to prove since the bound $\abs{ h(0) } \le \norm{ \hat h }_1$ implies \eqref{eq:v_bound_frac}.
Let $x \neq 0$ and $|\alpha|=\lfloor a \rfloor$. We first use the hypothesis to observe that
\begin{align}
    \int_0^\infty \frac{1}{u^{1+\delta}}\|U_u\hat h_\alpha\|_1 du
    & \le
    K_\alpha \int_0^1 \frac{u^\eta}{u^{1+\delta}}  du
     +
     2 \int_1^\infty \frac{1}{u^{1+\delta}}\|\hat h_\alpha\|_1 du
     \lesssim
     K_\alpha + \|\hat h_\alpha\|_1.
\end{align}
Let $\hat w(k) = \frac{1}{2ic_\delta}\int_0^\infty \frac{1}{u^{1+\delta}} (U_u\hat h_\alpha)(k) du$.
The above inequality shows that $\|\hat w\|_1 \lesssim K_\alpha + \| h_\alpha\|_1$.
Since the inverse Fourier transform of $\hat h_\alpha$ is $(-ix)^\alpha h(x)$,
it follows from Lemma~\ref{lem:fracder} that
\begin{equation}
    |x_1|^\delta |x^\alpha h(x)| = |w(x)| \le \|\hat w\|_1 \lesssim K_\alpha + \| h_{\alpha}\|_1.
\end{equation}
By symmetry,
the above also holds with $x_1$ replaced by $x_j$ for any $j$.
Since $a=|\alpha| +\delta$, we have $|x|^a|h(x)|\lesssim
\max_{|\alpha|=\lfloor a \rfloor}(
K_\alpha + \| h_{\alpha}\|_1)$.
Together with the bound for $x=0$,  this completes the proof.
\end{proof}

When it is possible to take a full derivative of $\hat h_\alpha$,
we can verify
\eqref{eq:Uv_alpha}
using the next lemma,
which can be considered
as an estimate on the fractional difference quotient $u^{-\eta}U_ug$.
For $\R^d$, the case $\eta=1$ can be found, e.g., in \cite[Sec.~5.8.2, Theorem~3]{Evan10}.

\begin{lemma} \label{lemma:h_diff}
Let $g: \T^d \to \C$ be weakly differentiable.  Fix $1\le p < d$.
Assume $\grad_j g \in L^p(\T^d)$ for all $j$.
Let $0 \le \eta \le 1$ and define $p_\eta$ by $\frac 1 { p_\eta } = \frac 1 p - \frac { 1- \eta} d$. Then \begin{align}
\norm{ U_u g }_{p_\eta } \lesssim u^\eta \norm{ g }_{W^{1,p} },
\end{align}
where $\norm{ g }_{W^{1,p} } = ( \norm g_p^p + \sum_{j=1}^d \norm{ \grad_j g }_p^p )^{1/p}$.
\end{lemma}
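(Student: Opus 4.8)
The inequality $\|U_ug\|_{p_\eta}\lesssim u^\eta\|g\|_{W^{1,p}}$ is a quantitative statement that translating by $\tilde u$ and differencing costs a power $u^\eta$ of regularity, measured by Sobolev embedding.  I would prove it by interpolating between the two endpoints $\eta=0$ and $\eta=1$: at $\eta=0$ we have $p_0$ given by $\frac1{p_0}=\frac1p-\frac1d$, and the bound $\|U_ug\|_{p_0}\le 2\|g\|_{p_0}\lesssim\|g\|_{W^{1,p}}$ is just the Sobolev embedding $W^{1,p}(\T^d)\hookrightarrow L^{p_0}(\T^d)$ (valid since $1\le p<d$) together with the triangle inequality, using translation-invariance of the $L^{p_0}$ norm on the torus.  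At $\eta=1$ we have $p_1=p$, and the claim is $\|U_ug\|_p\lesssim u\|g\|_{W^{1,p}}$; since $g$ is weakly differentiable with $\grad_jg\in L^p$, one writes, for a.e.\ $k$,
\[
    (U_ug)(k)=g(k+\uvec)-g(k-\uvec)=\int_{-u}^{u}\grad_1 g(k+t e_1)\,dt,
\]
and then $\|U_ug\|_p\le\int_{-u}^u\|\grad_1 g(\cdot+te_1)\|_p\,dt=2u\|\grad_1g\|_p\lesssim u\|g\|_{W^{1,p}}$ by Minkowski's integral inequality and translation-invariance.  (A density argument reduces the integral representation of $U_ug$ to smooth $g$, then passes to the limit in $L^p$.)

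**Interpolation step.**  With the two endpoints in hand I would invoke the Riesz--Thorin (or, more simply here, Hölder) interpolation of $L^q$ norms: for a fixed function $U_ug$ and any $\theta\in[0,1]$, if $\frac1{q}=\frac{1-\theta}{p_0}+\frac{\theta}{p_1}$ then $\|U_ug\|_q\le\|U_ug\|_{p_0}^{1-\theta}\|U_ug\|_{p_1}^{\theta}$ by the logarithmic convexity of $L^q$ norms.  Taking $\theta=\eta$ gives $\frac1q=\frac{1-\eta}{p_0}+\frac{\eta}{p}=\frac1p-\frac{1-\eta}{d}=\frac1{p_\eta}$, exactly the exponent in the statement, and the bound becomes
\[
    \|U_ug\|_{p_\eta}\le\bigl(2\|g\|_{p_0}\bigr)^{1-\eta}\bigl(2u\|\grad_1g\|_p\bigr)^{\eta}\lesssim u^{\eta}\|g\|_{W^{1,p}},
\]
where in the last step I used the Sobolev bound $\|g\|_{p_0}\lesssim\|g\|_{W^{1,p}}$ for the first factor and $\|\grad_1 g\|_p\le\|g\|_{W^{1,p}}$ for the second.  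This is clean because no new hypotheses are needed: the only structural inputs are $1\le p<d$ (so that $p_0$ is finite and positive and the Sobolev embedding holds) and weak differentiability with $\grad_jg\in L^p$.

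**Main obstacle.**  The conceptually delicate point is the representation $(U_ug)(k)=\int_{-u}^u\grad_1g(k+te_1)\,dt$ for a merely weakly differentiable $g$: this is the fundamental theorem of calculus along lines, which for weak derivatives requires knowing that $g$ has an absolutely continuous representative on almost every line parallel to $e_1$ — a standard fact (the ACL characterization of Sobolev functions), but one that must be cited rather than taken for granted, and which justifies the Minkowski-inequality manipulation.  A second, minor nuisance is that on the torus one should check the Sobolev embedding constant is uniform (it is, by a standard extension/covering argument, or one simply cites the embedding $W^{1,p}(\T^d)\hookrightarrow L^{p_0}(\T^d)$ directly).  Everything else is routine: the $\eta=1$ case is a one-line Minkowski estimate, and the interpolation in $\eta$ is pointwise-in-$u$ Hölder on $L^q(\T^d)$.
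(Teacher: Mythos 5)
Your proposal is correct and follows essentially the same route as the paper: the paper also reduces to the endpoints $\eta=0$ (triangle inequality plus the torus Sobolev embedding $W^{1,p}\hookrightarrow L^{p^*}$) and $\eta=1$ (fundamental theorem of calculus along the first coordinate plus Minkowski's integral inequality, justified via density of $C^1$ in $W^{1,p}$, which serves the same purpose as your ACL/density remark), and then concludes by log-convexity of the $L^q$ norms exactly as you do.
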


\begin{proof}
Define $p^*=p_0$
by $\frac 1 { p^* } = \frac 1 p - \frac { 1} d$, so that $\frac 1 {p_\eta} = \frac \eta p + \frac{ 1-\eta }{p^*}$.
Since $\norm{ U_u g }_{p_\eta } \le \norm{ U_u g }_{p }^\eta \norm{ U_u g }_{p^* }^{1-\eta}$ by the log-convexity of $L^p$ norms, it suffices to prove the cases $\eta = 0, 1$.
The case $\eta = 0$ follows from the triangle inequality and the torus Sobolev inequality
\cite[Corollary~1.2]{BO13}:
\begin{align}
\norm{ U_u g }_{p^* } \le 2 \norm{ g }_{p^* }
\lesssim
\norm{ g }_{W^{1,p} }.
\end{align}
For $\eta =1$, since $C^1$ is dense in $W^{1,p}$ it suffices to consider
$g \in C^1$, for which the Fundamental Theorem of Calculus gives
\begin{align}
U_u g(k) = u \int_{-1}^1 \grad_1 g (k + t \uvec) dt.
\end{align}
Then by Minkowski's integral inequality,
\begin{align} \label{eq:FTCbdd}
\norm{ U_u g }_p \le u \int_{-1}^1 \norm{ \grad_1 g (\cdot + t \uvec) }_p dt
= 2u \norm{ \grad_1 g  }_p
\le 2u \norm{ g }_{W^{1,p} }.
\end{align}
This completes the proof.
\end{proof}

\subsubsection{Proof of Theorem~\ref{thm:gaussian_lemma} }
\label{sec:proof_thm_strong}

With Lemma~\ref{lemma:v_bound_frac}, we reduce the proof of Theorem~\ref{thm:gaussian_lemma} to the following proposition.

\begin{proposition} \label{prop:f_frac}
Let $\abs \alpha \le n_d$, $0 \le u \le 1$, and suppose $\eta \in (0,1)$ satisfies
\begin{align} \label{eq:5.5}
n_d + \eta < (d - 2 + \rho \wedge 2)) \wedge ( \tfrac 1 2 d + 2 + \rho ) .
\end{align}
If $F$ obeys Assumption~\ref{ass:F}, then
\begin{align}
\norm{ U_u \hat f_\alpha }_1 \lesssim u^\eta.
\end{align}
\end{proposition}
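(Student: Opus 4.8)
The plan is to differentiate $\hat f = \hat E/(\hat A\hat F)$ using the product and quotient rules, exactly as in the proof of Proposition~\ref{prop:f_NN}, so that $\hat f_\alpha$ is a linear combination of terms of the form \eqref{eq:f_decomp0}. To bound $\|U_u \hat f_\alpha\|_1$ I would apply $U_u$ to such a product. The key algebraic tool is a discrete-Leibniz/telescoping identity for $U_u$ applied to a product: writing a product $P_1 P_2 \cdots P_m$ (with the factors being the bracketed ratios $\hat A_{\delta_n}/\hat A$, $\hat E_{\alpha_2}/(\hat A\hat F)$, $\hat F_{\gamma_m}/\hat F$ appearing in \eqref{eq:f_decomp0}), one has
\begin{align}
U_u(P_1\cdots P_m)(k) = \sum_{\ell} P_1(k+\uvec)\cdots P_{\ell-1}(k+\uvec)\, (U_u P_\ell)(k)\, P_{\ell+1}(k-\uvec)\cdots P_m(k-\uvec).
\end{align}
Then I would estimate the $L^1$ norm of each summand by H\"older's inequality: put the single differenced factor $U_u P_\ell$ in some $L^{r_\ell}$, and each undifferenced shifted factor $P_j(\cdot\pm\uvec)$ in its respective $L^{r_j}$ (the $L^p$ norm is shift-invariant on $\T^d$), choosing the exponents to sum reciprocally to $1$. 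The undifferenced factors are controlled by Lemma~\ref{lem:AFEbds} exactly as in Proposition~\ref{prop:f_NN}; the new ingredient is an $L^{r_\ell}$ estimate on $U_u P_\ell$ that gains a factor $u^\eta$.

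For the differenced factor I would invoke Lemma~\ref{lemma:h_diff}: if $P_\ell$ is weakly differentiable with $\grad_j P_\ell \in L^{p}$, then $\|U_u P_\ell\|_{p_\eta}\lesssim u^\eta\|P_\ell\|_{W^{1,p}}$ where $p_\eta^{-1}=p^{-1}-(1-\eta)/d$. So I need that each of $\hat A_{\delta_n}/\hat A$, $\hat E_{\alpha_2}/(\hat A\hat F)$, $\hat F_{\gamma_m}/\hat F$ is weakly differentiable with gradient in the appropriate $L^p$, i.e.\ I need to go "one derivative further" than in Section~\ref{sec:nn-weak}. Concretely, differentiating $\hat A_{\delta_n}/\hat A$ once more produces terms like $\hat A_{\delta_n+e_j}/\hat A$ and $\hat A_{\delta_n}\hat A_{e_j}/\hat A^2$, which by \eqref{eq:AA_k} and H\"older lie in $L^p$ for $p^{-1}>(\abs{\delta_n}\wedge 2 +1)/d$ (or better); similarly $\grad_j(\hat F_{\gamma_m}/\hat F)$ and $\grad_j(\hat E_{\alpha_2}/(\hat A\hat F))$ are handled by reusing \eqref{eq:F_gamma}, \eqref{eq:EAEF}, Lemma~\ref{lemma:Ek}, and the infrared bound, with $\abs\gamma$ replaced by $\abs\gamma+1$. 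The hypotheses of Lemma~\ref{lem:AFEbds} and of the weak-differentiability lemmas are met because condition \eqref{eq:5.5}, $n_d+\eta<(d-2+\rho\wedge 2)\wedge(\tfrac12 d+2+\rho)$, ensures that even with one extra derivative ($\abs\alpha\le n_d$ plus the $+1$ from $\grad_j$, effectively "$n_d+1$" but tempered by the $\eta$-gain in $p_\eta$) all exponents stay in the admissible range; I would track the bookkeeping so that the total reciprocal exponent budget is
\begin{align}
\frac{1}{r}=\Big(\sum_n \frac1{r_n}\Big)+\frac1{r_\ell^{\,\prime}}+\Big(\sum_m \frac1{q_m}\Big)
> \frac{\abs\alpha+2-\sigma}{d}+\frac{1-\eta}{d}
= \frac{\abs\alpha+2-\sigma+1-\eta}{d},
\end{align}
and this is $<1$ precisely when $\abs\alpha+\eta< d-2+\sigma+1-\ldots$, i.e.\ when $n_d+\eta<d-2+\rho\wedge 2$ (taking $\sigma<\rho\wedge 2$ arbitrarily close), which is \eqref{eq:5.5}. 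Choosing $r=1$ then gives $\|U_u\hat f_\alpha\|_1\lesssim u^\eta$.

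The main obstacle I anticipate is the careful exponent accounting when the differenced factor is the "singular" one, $\hat E_{\alpha_2}/(\hat A\hat F)$, and $\abs{\alpha_2}$ is small: there $U_u$ of this factor must be estimated not via Lemma~\ref{lemma:h_diff} on the ratio directly (whose gradient might be too singular at $k=0$ to lie in any $L^p$ with $p\ge 1$ when $\abs{\alpha_2}$ is large) but rather by splitting — either differencing $\hat E_{\alpha_2}$ itself and using that $U_u\hat E_{\alpha_2}$ inherits the smoothness of $\hat E$ (via Lemma~\ref{lemma:Ek} with $\abs\alpha$ raised by one, valid as long as $\abs{\alpha_2}+1<2+\sigma$, needing $\sigma\in(1,2]$) while keeping $1/(\hat A\hat F)$ undifferenced, or else absorbing the difference onto the benign factor $1/(\hat A\hat F)$ whose derivative is $\lesssim\abs k^{-5}$ and handling it by Lemma~\ref{lemma:h_diff}. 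One must check these two sub-cases cover all $\abs{\alpha_2}$ with $\abs{\alpha_2}\le n_d$, analogously to the two-case split already used in the proof of Lemma~\ref{lem:AFEbds} for $\hat E_\gamma/(\hat A\hat F)$; because $\eta<1$ we have a little slack, and the constraint \eqref{eq:5.5} is exactly what makes the worst case ($\abs\alpha=n_d$) work. No genuinely new idea beyond Section~\ref{sec:nn-weak} is needed — it is the same H\"older-in-$L^p$-Fourier scheme with one additional $\eta$-regularity step supplied by Lemma~\ref{lemma:h_diff} — but the combinatorics of the Leibniz expansion of $U_u$ over \eqref{eq:f_decomp0} is where the bulk of the routine work lies.
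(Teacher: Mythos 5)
Your skeleton coincides with the paper's reduction: the decomposition \eqref{eq:f_decomp0}, the telescoping identity \eqref{eq:X-Y} so that $U_u$ falls on one factor at a time, and H\"older's inequality with the undifferenced shifted factors controlled by Lemma~\ref{lem:AFEbds}. The gap is in how you produce the $u^\eta$ gain for the differenced factor. Your only mechanism is Lemma~\ref{lemma:h_diff}, which requires one genuine additional weak derivative of that factor in a suitable $L^p$. For the top-order factors this extra derivative need not exist under \eqref{eq:5.5}: for $\hat F_\gamma/\hat F$ with $\abs\gamma = n_d$, one more derivative involves $\hat F_{\gamma+e_j}$, hence (via Lemma~\ref{lemma:weak_Fourier}) $\abs x^{n_d+1}F(x)\in\ell^2(\Z^d)$, i.e.\ $n_d+1<\tfrac12 d+2+\rho$, whereas \eqref{eq:5.5} only guarantees $n_d+\eta<\tfrac12 d+2+\rho$ with $\eta<1$ (e.g.\ $d=8$, $\rho=\tfrac12$, $n_d=6$, $\eta<\tfrac12$ is admissible but $\abs x^{7}F(x)$ need not be in $\ell^2$). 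This is exactly why the paper invokes Lemma~\ref{lemma:h_diff} only in the low-order cases ($\abs\gamma=1$ for $\hat F_\gamma/\hat F$, $\abs\gamma<\sigma$ for $\hat E_\gamma/(\hat A\hat F)$) and otherwise gains the fraction $\eta$ in $x$-space: $U_u\hat h$ is the Fourier transform of $2i\sin(ux_1)h(x)$ and $\abs{\sin(ux_1)}\lesssim \abs{ux_1}^\eta$, which gives Lemma~\ref{lem:UFTbdd} (the fractional analogue of Lemma~\ref{lem:FTbdd}), combined with the further splittings \eqref{eq:UFF_decomp} and \eqref{eq:UEAF_decomp}, the direct symmetry estimate \eqref{eq:UF_inv} for $U_u\hat F\inv$ and $U_u\hat A\inv$, and, for the $E$-factor, Lemma~\ref{lemma:UE}, which uses the vanishing moments to produce $u^\eta$ \emph{times powers of} $\abs{k\pm\uvec}$ that cancel the shifted singularities of $1/(\hat A\hat F)$. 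None of these ingredients appears in your proposal. (Your budget line also has a slip: with Lemma~\ref{lemma:h_diff} the differenced factor contributes just over $(\deg+\eta)/d$ to the reciprocal sum, not $(\deg+1-\eta)/d$; but this is secondary.)

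Your anticipated fix for the $E$-factor does not close the gap either. Differencing $\hat E_{\alpha_2}$ itself and using Lemma~\ref{lemma:Ek} with the order raised by one needs $\abs{\alpha_2}+1<2+\sigma$, and your parenthetical requirement $\sigma\in(1,2]$ forces $\rho>1$, which Assumption~\ref{ass:F} does not supply for $2<d\le 9$. In the regime $1+\sigma\le\abs{\alpha_2}$ with $\abs{\alpha_2}+\eta<2+\sigma$, neither of your two options yields the bound that the H\"older bookkeeping requires, namely $\norm{U_u(\hat E_{\alpha_2}/(\hat A\hat F))}_q\lesssim u^\eta$ for all $q\inv>(2-\sigma+\abs{\alpha_2}+\eta)/d$: a crude $u^\eta$ bound on $U_u\hat E_{\alpha_2}$ without the accompanying powers of $\abs{k\pm\uvec}$ leaves the full $\abs{k\pm\uvec}^{-4}$ singularity and only gives $q\inv>4/d$, which overspends the budget when $\abs\alpha=n_d$ (for instance when $s_0=1$). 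So while your reduction of the proposition to an analogue of Lemma~\ref{lemma:Uterms} is the same as the paper's, the proof of that lemma is the substance of the argument, and the mechanism you propose for it fails precisely at the borderline cases that the hypothesis \eqref{eq:5.5} is designed to reach.
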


\smallskip
\begin{proof}[Proof of Theorem~\ref{thm:gaussian_lemma} assuming Proposition~\ref{prop:f_frac}]
By definition, $s \in [0,\rho\wedge 2 \wedge (\rho-\frac{d-8}{2})) $.
It suffices to prove the theorem for values of $s$ close to the upper
limit of this interval, so we assume $s$ is not an integer.
Let $\delta = s - \lfloor s \rfloor \in (0,1)$.  Then
by \eqref{eq:nddef} (where $\lfloor s \rfloor$ is written as $s_0$),
\begin{equation} \label{eq:slimits}
    d-2+s = n_d+\delta < (d - 2 + \rho\wedge 2) \wedge ( \tfrac 1 2 d + 2 + \rho ).
\end{equation}
Since we have a strict inequality, we can pick an $\eta \in (\delta, 1)$
satisfying \eqref{eq:5.5}.
By Proposition~\ref{prop:f_frac}, the error term $f$ in the decomposition
$G = \lambda C_\mu + f$ obeys $\norm{ U_u \hat f_\alpha }_1 \lesssim u^\eta$.
With Proposition~\ref{prop:f_NN}, this supplies the hypothesis of
Lemma~\ref{lemma:v_bound_frac} (with $a=n_d+\delta$), whose conclusion gives  the desired result that
$f(x) = O(\nnnorm x^{-(d-2+s)})$.
\end{proof}

To prove Proposition~\ref{prop:f_frac}, as in \eqref{eq:f_decomp0}, we write $\hat f_\alpha$ as a linear combination of terms of the form
\begin{align}
\label{eq:f_decomp}
\( \prod_{n=1}^i \frac{ \hat A_{\delta_n} }{\hat A } \)
\( \frac{ \hat E_{\alpha_2} }{ \hat A \hat F } \)
\( \prod_{m=1}^j \frac{ \hat F_{\gamma_m}  }{ \hat F } \).
\end{align}
To calculate $U_u \hat f_\alpha$, we use the linearity of $U_u$ and
apply the following elementary formula for the difference of two products. Let $X_{k,l}=\prod_{i=k}^l a_i$ and $Y_{k,l}=\prod_{i=k}^l b_i$.  Then, with
empty products equal to $1$, \begin{equation}
\label{eq:X-Y}
    X_{1,n} - Y_{1,n}
    =
    \sum_{j=1}^n (a_j-b_j)X_{1,j-1}Y_{j+1,n}.
\end{equation}
We apply this to \eqref{eq:f_decomp}, so that $U_u$ acts on one factor at a time.
We will prove the following lemma.

\begin{lemma} \label{lemma:Uterms}
Let $\gamma$ be a multi-index, $0 \le \eta \le 1$, with $\abs \gamma + \eta <  (d - 2 + \rho \wedge 2 ) \wedge ( \tfrac 1 2 d + 2 + \rho )$.  
Choose $\sigma \in (0, \rho)$ such that $\sigma \le 2$, and choose $q_1,q_2$ satisfying
\begin{equation}
    \frac{ \abs \gamma + \eta} d < q_1\inv
    ,
    \qquad
    \frac{ 2 - \sigma + \abs \gamma +\eta } d < q_2\inv
    .
\end{equation}
Then,
for $F$ obeying Assumption~\ref{ass:F} and for
$0 \le u \le 1$,
\begin{align} \label{eq:Uterms}
\Bignorm{ U_u ( \frac{ \hat A_\gamma } { \hat A } ) }_{q_1}, \;
\Bignorm{ U_u ( \frac{ \hat F_\gamma } { \hat F } ) }_{q_1}, \;
\Bignorm{ U_u ( \frac{ \hat E_\gamma } { \hat A \hat F } ) }_{q_2}
\lesssim u^\eta.
\end{align}
\end{lemma}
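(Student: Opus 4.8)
The plan is to prove the three bounds in \eqref{eq:Uterms} by combining the $L^p$ estimates already established in the proof of Lemma~\ref{lem:AFEbds} with the fractional difference-quotient estimate of Lemma~\ref{lemma:h_diff}. The underlying principle is uniform: each of the three ratios $\hat A_\gamma/\hat A$, $\hat F_\gamma/\hat F$, $\hat E_\gamma/(\hat A\hat F)$ is a product of the form $(\text{numerator derivative})\times(\text{negative power of }|k|)$, and applying $U_u$ produces (via \eqref{eq:X-Y}) a sum of terms in which $U_u$ hits exactly one factor. For a factor that is an explicit negative power $|k|^{-b}$ of $|k|$, one checks directly that $U_u(|k|^{-b})$ is controlled in an appropriate $L^p$ norm by $u^\eta$ (the singularity is at worst $|k|^{-b}$ shifted, and the difference gains a factor $u^\eta/|k|^{\eta}$ in the region where both shifted points are near the origin, with an integrable tail elsewhere). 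For the factor involving the numerator derivative (which may only be a weak derivative), one takes one more classical or weak derivative and invokes Lemma~\ref{lemma:h_diff} to trade that derivative for a factor $u^\eta$ at the cost of worsening the Lebesgue exponent by $(1-\eta)/d$. The remaining, undifferentiated factors are estimated in their usual $L^p$ norms from Lemma~\ref{lem:AFEbds}, and everything is glued together by H\"older's inequality, the bookkeeping on exponents being exactly as in the proof of Proposition~\ref{prop:f_NN} but shifted by the additional budget $\eta/d$. This is precisely why the hypotheses of the present lemma replace $|\gamma|$ by $|\gamma|+\eta$ throughout.

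First I would treat $U_u(\hat A_\gamma/\hat A)$, the easiest case, since $\hat A = 1-\mu\hat\Dnn$ is smooth: $\hat A_\gamma$ is bounded (and $\lesssim |k|$ when $|\gamma|=1$) with bounded derivatives, so $\hat A_\gamma/\hat A \lesssim |k|^{-(|\gamma|\wedge 2)}$ has bounded weak derivatives of one more order away from the origin, hence by Lemma~\ref{lemma:h_diff} (applied with $p$ chosen so that $p^{-1}>(|\gamma|\wedge 2 +1)/d$, legitimate because $|\gamma|+\eta< d-2+\rho\wedge 2$ keeps us below the Sobolev threshold) we get $\|U_u(\hat A_\gamma/\hat A)\|_{q_1}\lesssim u^\eta$ with $q_1^{-1}>(|\gamma|\wedge 2)/d\ge$ nothing sharper needed, and a fortiori for $q_1^{-1}>(|\gamma|+\eta)/d$. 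Next, for $U_u(\hat F_\gamma/\hat F)$ I would write this ratio as $\hat F_\gamma \cdot (\hat F)^{-1}$ and expand $U_u$ of the product into $U_u(\hat F_\gamma)\cdot(\hat F)^{-1}$ plus $\hat F_\gamma(k+\tilde u)\cdot U_u((\hat F)^{-1})$. For the first term I use that $\hat F_\gamma$ lies in $L^\infty$ (if $|\gamma|<2+\rho$) or in $L^{d/(|\gamma|-2-\rho)+}$ (if $|\gamma|\ge 2+\rho$) — this is \eqref{eq:F_gamma} — together with $U_u((\hat F)^{-1})\lesssim u^\eta|k|^{-(2+\eta)}$ in the relevant $L^p$ norm, which one gets from Lemma~\ref{lemma:h_diff} applied to $(\hat F)^{-1}\lesssim|k|^{-2}$ or directly by the same mean-value argument, valid since $2+\eta<d$. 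For the second term, I differentiate $\hat F_\gamma$ once more weakly (possible as long as $|\gamma|+1<\tfrac12 d+2+\rho$, which holds because $|\gamma|+\eta<\tfrac12 d+2+\rho$ and we only use integer differentiability up to $|\gamma|+1$ on the shifted copy — this is the delicate point and I address it below) and apply Lemma~\ref{lemma:h_diff} to gain $u^\eta$, then H\"older against $(\hat F)^{-1}\lesssim|k|^{-2}\in L^p$ for $p^{-1}>2/d$. Adding the exponent contributions gives the stated $q_1^{-1}>(|\gamma|+\eta)/d$.

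The third bound, $U_u(\hat E_\gamma/(\hat A\hat F))$, is handled the same way but splitting into the two regimes $|\gamma|<2+\sigma$ and $|\gamma|\ge2+\sigma$ exactly as in the proof of Lemma~\ref{lem:AFEbds}. In the first regime, $\hat E_\gamma/(\hat A\hat F)\lesssim|k|^{-(2-\sigma+|\gamma|)}$ is an explicit negative power, so $U_u$ of it is $\lesssim u^\eta|k|^{-(2-\sigma+|\gamma|+\eta)}$ in $L^{q_2}$ for $q_2^{-1}>(2-\sigma+|\gamma|+\eta)/d$, directly. In the second regime, I write the ratio as $\hat E_\gamma\cdot(\hat A\hat F)^{-1}$, split $U_u$ across the product, use $\hat E_\gamma\in L^r$ for $r^{-1}>(|\gamma|-2-\sigma)/d$ from the proof of Lemma~\ref{lem:AFEbds} (via $|x|^{|\gamma|}E(x)\in\ell^2$, which needs $|\gamma|<\tfrac12 d+2+\rho$) together with $U_u((\hat A\hat F)^{-1})\lesssim u^\eta|k|^{-(4+\eta)}$ in $L^p$ for $p^{-1}>(4+\eta)/d$; and for the other half differentiate $\hat E_\gamma$ once more, apply Lemma~\ref{lemma:h_diff}, and H\"older against $(\hat A\hat F)^{-1}\lesssim|k|^{-4}\in L^p$ for $p^{-1}>4/d$. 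The exponents again combine to the claimed range. The main obstacle — and the reason the hypothesis $|\gamma|+\eta<\tfrac12 d+2+\rho$ rather than merely $|\gamma|<\cdots$ is imposed — is ensuring that all the weak derivatives invoked actually exist: applying $U_u$ and then Lemma~\ref{lemma:h_diff} effectively costs one extra order of differentiability on $\hat F_\gamma$ and $\hat E_\gamma$, so one must check $|\gamma|+1$ (or, more precisely, the fractional budget $|\gamma|+\eta$) stays below the $\ell^2$-Fourier threshold $\tfrac12 d+2+\rho$ from Lemma~\ref{lemma:weak_Fourier}; the argument in the proof of Lemma~\ref{lem:AFEbds} showing $|x|^{|\gamma|+1}F(x)\in\ell^2$ under this condition is exactly what is needed, and I would cite it rather than repeat it. The remaining work is purely the H\"older exponent bookkeeping, which I would carry out once in a unified display and note it is identical across the three cases.
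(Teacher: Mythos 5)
Your overall architecture (split $U_u$ across the product via \eqref{eq:X-Y}, estimate the undifferentiated factors by Lemma~\ref{lem:AFEbds}, glue with H\"older, and account for the extra budget $\eta/d$) matches the paper, but the step you use to gain the factor $u^\eta$ on the factors $\hat F_\gamma$ and $\hat E_\gamma$ has a genuine gap. You propose to take one additional weak derivative of $\hat F_\gamma$ (resp.\ $\hat E_\gamma$) and invoke Lemma~\ref{lemma:h_diff}. Via Lemma~\ref{lemma:weak_Fourier} this requires $|x|^{|\gamma|+1}F(x)\in\ell^2(\Z^d)$, i.e.\ $|\gamma|+1<\tfrac12 d+2+\rho$; but the hypothesis of the lemma only gives $|\gamma|+\eta<\tfrac12 d+2+\rho$ with $\eta\le 1$, and your assertion that the former ``holds because'' of the latter is false whenever $\eta<1$. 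This is not a technicality: the whole point of the fractional-derivative improvement is precisely the regime where $|\gamma|+\eta$ is below the $\ell^2$-Fourier threshold while $|\gamma|+1$ is not (otherwise one could simply run the integer-power argument of Section~\ref{sec:nn-weak} with one more derivative and Theorem~\ref{thm:gaussian_lemma} would be immediate). The paper avoids the extra derivative altogether: when $U_u$ falls on $\hat F_\gamma$ or $\hat E_\gamma$ it returns to $x$-space, writes $U_u\hat h$ as the Fourier transform of $2i\sin(ux_1)h(x)$, and uses $|\sin(ux_1)|\lesssim|ux_1|^\eta$ (Lemma~\ref{lem:UFTbdd}), which costs only $\eta$ powers of $|x|$ and therefore respects exactly the hypothesis $|\gamma|+\eta<\tfrac12 d+2+\rho$. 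You never introduce an estimate of this type, and Lemma~\ref{lemma:h_diff} cannot substitute for it in the critical range.

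A second, related gap: in the small-$|\gamma|$ regime for the $\hat E$ term you claim that since $\hat E_\gamma/(\hat A\hat F)$ is bounded by an explicit negative power $|k|^{-(2-\sigma+|\gamma|)}$, its $U_u$ is ``directly'' bounded by $u^\eta|k|^{-(2-\sigma+|\gamma|+\eta)}$. A pointwise size bound on a function does not control its difference quotient; some modulus of continuity is needed. The paper supplies this either by one genuine derivative plus Lemma~\ref{lemma:h_diff} (when $|\gamma|<\sigma$) or by Lemma~\ref{lemma:UE}, which exploits the vanishing zeroth and second moments of $E$ together with $x$-space sine bounds to produce $u^\eta\big(|k+\tilde u|^{2+\sigma-\eta-|\gamma|}+|k-\tilde u|^{2+\sigma-\eta-|\gamma|}\big)$; your proposal contains no analogue of this moment-based estimate for $U_u\hat E_\gamma$. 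Your treatment of $U_u\hat F^{-1}$ and of the $\hat A$ term, and the H\"older bookkeeping, are fine, but without a Lemma~\ref{lem:UFTbdd}/Lemma~\ref{lemma:UE}-type mechanism the proof does not close under the stated hypotheses.
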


\begin{proof}[Proof of Proposition~\ref{prop:f_frac} assuming Lemma~\ref{lemma:Uterms}]
By  hypothesis, $n_d + \eta < (d - 2 + \rho \wedge 2) \wedge ( \tfrac 1 2 d + 2 + \rho )$.
By the strict inequality,
we can pick $\sigma$ slightly smaller than $ \rho \wedge 2$ so that
we have $n_d + \eta < d - 2 + \sigma$ still.

Let $|\alpha| \le n_d$ and $u \in [0,1]$.
To bound $U_u \hat f_\alpha$, it suffices to bound the $L^1$ norm of $U_u$ applied to \eqref{eq:f_decomp}. We use \eqref{eq:X-Y} and bound the $L^1$ norm of, e.g.,
\begin{align}
\label{eq:1.111}
\( \prod_{n=1}^i \frac{ \hat A_{\delta_n} }{\hat A } (k+\uvec) \)
U_u( \frac{ \hat E_{\alpha_2} }{ \hat A \hat F } )(k)
\( \prod_{m=1}^j \frac{ \hat F_{\gamma_m}  }{ \hat F } (k - \uvec) \).
\end{align}
Recall from Lemma~\ref{lem:AFEbds} that,
under our current hypotheses,
\begin{align}
\frac{ \hat F_\gamma } { \hat F } ,
\frac{ \hat A_\gamma } { \hat A }
	& \in L^q
	\quad ( q\inv > \frac{ \abs \gamma  } d ), \qquad
\frac{ \hat E_{\alpha_2} }{\hat A \hat F}  \in L^q
	\quad  (q\inv > \frac{ 2 - \sigma + \abs{\alpha_2}  }d).
\end{align}
By H\"older's inequality, the $L^r$ norm of \eqref{eq:1.111} is bounded by
\begin{align}
\( \prod_{n=1}^i \Bignorm{ \frac{ \hat A_{\delta_n} }{\hat A } }_{r_n} \)
\Bignorm{ U_u( \frac{ \hat E_{\alpha_2} }{ \hat A \hat F } ) }_q
\( \prod_{m=1}^j \Bignorm{ \frac{ \hat F_{\gamma_m}  }{ \hat F } }_{q_m} \),
\end{align}
where $r$ can be any number that satisfies
\begin{align}
\frac 1 r &= \( \sum_{n=1}^i \frac 1 {r_n} \) + \frac 1 q + \( \sum_{m=1}^j \frac 1 {q_m} \) \nl
&> \frac{ \sum_{n=1}^i \abs{\delta_n}  } d  + \frac{ 2 - \sigma + \abs{\alpha_2} + \eta } d + \frac{ \sum_{m=1}^j \abs{\gamma_m}  } d
= \frac{  \abs \alpha + \eta + 2 - \sigma } d.
\end{align}
In the above, $r_n$ and $q_m$ are dictated by Lemma~\ref{lem:AFEbds}, and $q$ is
dictated by Lemma~\ref{lemma:Uterms}.
Since $\abs \alpha + \eta \le n_d + \eta < d - 2 + \sigma$, we can take $r=1$.
Also, by Lemma~\ref{lemma:Uterms}, the norm contains a factor $u^\eta$ from $\norm{ U_u( \frac{ \hat E_{\alpha_2} }{ \hat A \hat F } ) }_q$.
Similar terms with $U_u$ instead applied to one of the
$\hat F_\gamma /\hat F $ or $\hat A_\gamma /\hat A$ factors are treated in the same way,
using Lemma~\ref{lemma:Uterms}.
\end{proof}

Before we prove Lemma~\ref{lemma:Uterms}, we give a
further consequence of the good moment behaviour of $E$ in \eqref{eq:E_condition}.
The next lemma is similar to Lemma~\ref{lemma:Ek} and bounds $U_u \hat E$ and its derivatives.
Instead of the good powers of $\abs k$ in \eqref{eq:Ek}, we now have good powers of $\abs{ k \pm \uvec}$.

\begin{lemma} \label{lemma:UE}
Suppose $E: \Z^d \to \R$ is $\Z^d$-symmetric,
has vanishing zeroth and second moments as in
\eqref{eq:E_condition},
and satisfies $\abs{ E(x) } \le  K \abs x^{-(d+2 + \rho)}$ for some $K, \rho>0$.
Choose $\sigma \in (0, \rho)$ with $\sigma \le 2$, choose $0 \le \eta \le 1$,
and let $\alpha$ be a multi-index
with
$\abs \alpha +\eta < 2 + \sigma$.
Then there is a constant $c = c(\sigma, \rho, d)$ such that
\begin{align} \label{eq:UE_small}
\abs{  U_u \hat E_{\alpha}(k) }
\le cK \,  u^\eta \Big( \abs{ k+\uvec }^{2 + \sigma - \eta - \abs{\alpha} } + \abs{ k-\uvec }^{2 + \sigma - \eta - \abs{\alpha} } \Big).
\end{align}
\end{lemma}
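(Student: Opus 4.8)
The plan is to mimic the proof of Lemma~\ref{lemma:Ek} but to track the difference operator $U_u$ applied to $\hat E_\alpha$, using the representation $\hat E(k) = \sum_x E(x) g_x(k)$ with $g_x(k) = \cos(k\cdot x) - 1 + \tfrac{(k\cdot x)^2}{2!}$, which is valid because $E$ is $\Z^d$-symmetric with vanishing zeroth and second moments. Differentiating $\alpha$ times and then applying $U_u$ gives
\begin{equation}
    U_u \hat E_\alpha(k) = \sum_{x\in\Z^d} E(x)\,\big[\grad^\alpha g_x(k+\uvec) - \grad^\alpha g_x(k-\uvec)\big],
\end{equation}
so it suffices to bound $\grad^\alpha g_x(k+\uvec) - \grad^\alpha g_x(k-\uvec)$ pointwise by $c_0\, u^\eta\big(|k+\uvec|^{2+\sigma-\eta-|\alpha|} + |k-\uvec|^{2+\sigma-\eta-|\alpha|}\big)\,|x|^{2+\sigma}$ and then sum against $|E(x)| \le K|x|^{-(d+2+\rho)}$, the sum $\sum_{x\neq 0}|x|^{2+\sigma-d-2-\rho}$ converging since $\sigma < \rho$.

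The pointwise estimate is the technical heart. First I would interpolate: each derivative $\grad^\alpha g_x$ is (up to signs and factors of $x_i$) one of $g_x$, $\sin(k\cdot x)$, $\cos(k\cdot x)$, times a monomial in the $x_i$ of degree $|\alpha|$. From Lemma~\ref{lemma:Ek}'s proof we already have the bound $|\grad^\alpha g_x(k)| \lesssim |k\cdot x|^{2+\sigma-|\alpha|}|x|^{|\alpha|} \le |k|^{2+\sigma-|\alpha|}|x|^{2+\sigma}$ valid for $|\alpha| + 0 < 2+\sigma$, which handles the case $\eta = 0$ directly (triangle inequality on $U_u$). For $\eta = 1$, I would write $\grad^\alpha g_x(k+\uvec) - \grad^\alpha g_x(k-\uvec) = 2u\int_{-1}^1 \grad_1\grad^\alpha g_x(k+t\uvec)\,dt$ by the fundamental theorem of calculus, and bound $|\grad_1\grad^\alpha g_x(k+t\uvec)| \lesssim |x|^{2+\sigma}|k+t\uvec|^{2+\sigma - 1 - |\alpha|}$ using the analogue of the Lemma~\ref{lemma:Ek} bounds with $|\alpha|+1$ in place of $|\alpha|$ (which is legitimate since $|\alpha|+1 < 2+\sigma$ when $\eta=1$); then since $|k+t\uvec|$ for $t\in[-1,1]$ is comparable to $|k+\uvec|\vee|k-\uvec|$ up to constants, the integral is bounded by $u\,|x|^{2+\sigma}(|k+\uvec|^{1+\sigma-|\alpha|} + |k-\uvec|^{1+\sigma-|\alpha|})$, modulo care when the exponent is negative and $k\pm\uvec$ may vanish. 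The general $\eta\in(0,1)$ case then follows by log-convexity/interpolation between the $\eta=0$ and $\eta=1$ bounds, exactly as in the proof of Lemma~\ref{lemma:h_diff}: one writes $|U_u(\cdot)| \le |U_u(\cdot)|^\eta_{\text{($\eta=0$ bound)}} \cdot |U_u(\cdot)|^{1-\eta}_{\text{($\eta=1$ bound)}}$ pointwise (or, more cleanly, does the interpolation at the level of the already-summed $\hat E$, treating each $x$-summand and recombining), giving the $u^\eta$ and the mixed exponent $2+\sigma-\eta-|\alpha|$.

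The main obstacle I anticipate is handling the possibly negative exponent $2+\sigma-\eta-|\alpha|$ cleanly when $k+\uvec$ or $k-\uvec$ is near zero. When $|\alpha|$ is small (e.g. $|\alpha|=0$) the exponent is positive and everything is a bona fide pointwise bound with no singularity; but the later application in Lemma~\ref{lemma:Uterms} needs $|\alpha|$ up to roughly $n_d$, and then $|k\pm\uvec|$ is raised to a negative power, so the right-hand side of \eqref{eq:UE_small} is only locally integrable rather than bounded. This is fine for the intended use — the bound is meant to be divided by $\hat A\hat F \gtrsim |k|^4$ and then integrated — but it means I must be careful that the interpolation step is still valid: interpolating two pointwise inequalities where one side can blow up is legitimate as long as I interpolate the bounds themselves (nonnegative quantities) and do not divide by anything. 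A secondary, more bookkeeping-level point is that in the $\eta=1$ FTC step the integration path $\{k+t\uvec : t\in[-1,1]\}$ must be shown to stay away from $0$ in a quantified way relative to $|k\pm\uvec|$; the elementary inequality $|k+t\uvec| \ge \tfrac12(|k+\uvec|\wedge|k-\uvec|)$ for $|t|\le 1$ (or a similar comparison, distinguishing whether $k_1$ and $u$ have the same sign) takes care of this, and I would record it as a short preliminary observation before the interpolation.
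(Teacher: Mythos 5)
Your reduction to a pointwise bound on $\grad^\alpha g_x(k+\uvec)-\grad^\alpha g_x(k-\uvec)$, summed against $|E(x)|\le K|x|^{-(d+2+\rho)}$, is the right start, and your $\eta=0$ endpoint is fine. But the interpolation step has a genuine gap: you interpolate between the $\eta=0$ bound and an $\eta=1$ bound obtained by the fundamental theorem of calculus, and the latter requires estimating $\grad_1\grad^\alpha g_x$, a derivative of order $|\alpha|+1$, by $|x|^{2+\sigma}|k+t\uvec|^{1+\sigma-|\alpha|}$. You justify this by ``$|\alpha|+1<2+\sigma$ when $\eta=1$'', but the lemma's hypothesis is only $|\alpha|+\eta<2+\sigma$ for the \emph{given} $\eta\in[0,1]$, which does not imply $|\alpha|+1<2+\sigma$. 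The hypothesis explicitly allows $|\alpha|=2$ with $\sigma\le 1$ (any $\eta<\sigma$) and $|\alpha|=3$ with $1<\sigma\le 2$ (any $\eta<\sigma-1$), and in those cases the $\eta=1$ endpoint is simply unavailable: the exponent $1+\sigma-|\alpha|$ is negative, and the only alternative bound $|\grad_1\grad^\alpha g_x|\le|x|^{|\alpha|+1}$ produces a power of $|x|$ exceeding $2+\sigma$, which need not be summable against $|E(x)|$ (e.g.\ $|\alpha|=2$, $\rho\le 1$) and in any case does not yield the claimed exponent. So your argument proves the lemma only when $|\alpha|<1+\sigma$, which misses exactly the delicate cases.

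The paper avoids taking the extra derivative altogether in those cases: it writes $U_u\hat E_\alpha(k)=2i\sum_x\sin(ux_1)(ix)^\alpha E(x)e^{ik\cdot x}$, uses the fractional bound $|\sin(ux_1)|\lesssim|ux_1|^{2+\sigma-|\alpha|}$ (exponent in $[0,1]$ precisely when $|\alpha|\ge 1+\sigma$) to get $|U_u\hat E_\alpha(k)|\lesssim Ku^{2+\sigma-|\alpha|}$, and only for small $|\alpha|$ subtracts the linear term $ux_1$ and identifies the remainder with $u\hat E_{\alpha+e_1}$ (see \eqref{eq:UE_detail}); the surplus power of $u$ is then converted into the $k$-dependence of \eqref{eq:UE_small} via $2u=|(k+\uvec)-(k-\uvec)|\le|k+\uvec|+|k-\uvec|$. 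You could repair your scheme by interpolating instead between $\eta=0$ and $\eta=2+\sigma-|\alpha|$ (when the latter is $\le 1$), with the endpoint bound coming from $|\sin(ux_1)|\le|ux_1|^{2+\sigma-|\alpha|}$ rather than from a further derivative, but as written the proof does not cover all $\alpha,\eta$ allowed by the hypothesis. Finally, your concern about negative exponents in \eqref{eq:UE_small} is moot: the hypothesis $|\alpha|+\eta<2+\sigma$ forces $2+\sigma-\eta-|\alpha|>0$, and Lemma~\ref{lemma:Uterms} invokes this lemma only in that regime; relatedly, the lower bound $|k+t\uvec|\ge\tfrac12(|k+\uvec|\wedge|k-\uvec|)$ you propose to record is false (take $k=0$, $t=0$), though it is also not needed since only positive exponents occur.
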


\begin{proof}
We prove that
\begin{align}
\label{eq:UE_detail}
\abs{ U_u  \hat E_\alpha(k) } \le cK \cdot
\begin{cases}
u^{1+\sigma }\abs k + u \abs k^{1+\sigma} 	& (\abs \alpha = 0)
\\
u^{2+\sigma - \abs \alpha} + u \abs k^{1+\sigma - \abs \alpha} & (1 \le \abs \alpha < 1+\sigma)    \\
u^\sigma & (\sigma\le 1, \; \abs \alpha = 2)
\\
u^{\sigma-1} & (1<\sigma \le 2, \; \abs \alpha =3).
\end{cases}
\end{align}
The claim \eqref{eq:UE_small} then follows from the inequalities $\abs k, u \le 2\inv (\abs{ k + \uvec } + \abs { k - \uvec })$ and $ (\abs a + \abs b)^p \le 2^p( \abs a^p + \abs b^p)$ with $p = 2+\sigma-\eta - \abs \alpha \ge 0$.
For example, for the case $|\alpha|=0$ we use
\begin{equation}
    u^{1+\sigma}|k| = u^\eta u^{1+\sigma - \eta}|k|
    \lesssim
    u^\eta (|k+\uvec| + |k-\uvec|)^{1+\sigma - \eta + 1}
    \lesssim
    u^\eta (|k+\uvec|^{2+\sigma - \eta} + |k-\uvec|^{2+\sigma - \eta}).
\end{equation}

By definition,
\begin{align}
\label{eq:Esine}
\frac 1 {2i} U_u \hat E_\alpha(k)
= \sum_{x\in\Z^d}  \sin(ux_1)  (ix)^\alpha E(x) e^{ik\cdot x}
.
\end{align}
For the last two lines of \eqref{eq:UE_detail},
since $2 + \sigma - \abs \alpha \in [0,1]$ we can use
$\abs{ \sin(ux_1) } \lesssim  \abs{ux_1}^{ 2+ \sigma -\abs\alpha }$, together
with the fact that $2+\sigma < 2+\rho$ and the assumed decay of $E(x)$, to see that
\begin{align}
\abs{ U_u \hat E_\alpha (k) }
&\lesssim  \sum_{x\in\Z^d} \abs{ux_1}^{ 2+ \sigma -\abs\alpha } \abs x^{\abs\alpha} \abs{E(x)}
\lesssim Ku^{2+\sigma - \abs \alpha} ,
\end{align}
as claimed.

For the first two lines of \eqref{eq:UE_detail}, we instead make the decomposition
\begin{align}
\frac 1 {2i} U_u \hat E_\alpha(k)
= \sum_{x\in\Z^d} \big( \sin(ux_1) - ux_1 \big) (ix)^\alpha E(x) e^{ik\cdot x}
+ \sum_{x\in\Z^d} ux_1 (ix)^\alpha E(x) e^{ik\cdot x} .
\label{eq:5.14}
\end{align}
The second term is exactly $\frac u i  \hat E_{\alpha + e_1}(k)$, and
since $\abs \alpha + 1 < 2 + \sigma$, Lemma~\ref{lemma:Ek} implies that its
absolute value is bounded by a multiple of $K u \abs k^{2+\sigma - (\abs \alpha +1)}
=K u \abs k^{1+\sigma - \abs \alpha}$.
For the first term of \eqref{eq:5.14}, if $\abs \alpha = 0$  we write
$\tilde x = (x_2, \dots, x_d)$ and $\tilde k = (k_2, \dots, k_d)$
and use symmetry to obtain
\begin{align}
\Big\lvert \sum_{x\in\Z^d} \big( \sin(ux_1) - ux_1 \big) E(x) e^{ik\cdot x} \Big \rvert
&= \Big \lvert \sum_{x\in\Z^d} \big( \sin(ux_1) - ux_1 \big) E(x) e^{i\tilde k\cdot \tilde x} \sin(k_1x_1) \Big \rvert \nl
&\lesssim \sum_{x\in\Z^d} \abs{ ux_1 }^{1+\sigma} \abs {E(x) } \abs{ k_1 x_1 }
\lesssim K u^{1+\sigma} \abs k,
\end{align}
where we used $1+\sigma\le 3$ to bound the difference between the sine and
its linear part, as well as $2+ \sigma < 2+ \rho$ and the decay of $E(x)$.
Finally, if $1 \le \abs \alpha < 1+\sigma$, then $2 + \sigma - \abs \alpha \in[1, 3]$, and
\begin{align}
\Big\lvert \sum_{x\in\Z^d} \big( \sin(ux_1) - ux_1 \big) (ix)^\alpha E(x) e^{ik\cdot x} \Big \rvert
&\lesssim  \sum_{x\in\Z^d} \abs{ ux_1 }^{2+\sigma - \abs \alpha} \abs x^{\abs \alpha}\abs {E(x) }
\lesssim K u^{2+\sigma - \abs \alpha} ,
\end{align}
as desired.
This completes the proof.
\end{proof}

\subsubsection{Proof of Lemma~\ref{lemma:Uterms}}
\label{sec:Uterms}

It remains to prove Lemma~\ref{lemma:Uterms}.
When it is not possible to take a full derivative as
required by Lemma~\ref{lemma:h_diff}, we use the following lemma.
Its role in the proof of Lemma~\ref{lemma:Uterms} is analogous to the role
played by Lemma~\ref{lem:FTbdd} in the proof of
Lemma~\ref{lem:AFEbds}.

\begin{lemma}
\label{lem:UFTbdd}
Let $h: \Z^d \to \R$ obey $|h(x)| \le K\nnnorm x^{-b}$ for some $K,b>0$,
and let $0 \le \eta \le 1$.
\begin{enumerate}[label=(\roman*)]
\item
If $b  > d + \eta$ then
$\hat h \in L^\infty(\T^d)$ and $\norm{ U_u \hat h }_\infty \le c_{b,\eta} u^\eta K $.

\item
If $\half d + \eta < b \le d + \eta$ then
$\hat h \in L^q(\T^d)$
and $\norm{ U_u \hat h }_{q} \le c_{d,b,q} u^\eta K$  for all $1 \le q < d/(d-b + \eta)$.
\end{enumerate}
\end{lemma}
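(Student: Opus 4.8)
The plan is to prove Lemma~\ref{lem:UFTbdd} by combining the decay hypothesis on $h$ with the first-difference structure of $U_u$, exactly mirroring the proof of Lemma~\ref{lem:FTbdd} but with the extra factor $u^\eta$ extracted from the difference $e^{i(k+\uvec)\cdot x}-e^{i(k-\uvec)\cdot x}$. Concretely, one has
\begin{align}
\label{eq:UhFourier}
(U_u\hat h)(k) = \sum_{x\in\Z^d} h(x)\,\big(e^{i(k+\uvec)\cdot x}-e^{i(k-\uvec)\cdot x}\big) = 2i\sum_{x\in\Z^d} h(x)\,\sin(ux_1)\,e^{ik\cdot x},
\end{align}
so that $U_u\hat h$ is, up to the constant $2i$, the Fourier transform of the function $x\mapsto \sin(ux_1)h(x)$. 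The key elementary bound is the interpolation inequality $|\sin t|\le |t|^\eta$ valid for $0\le\eta\le1$ (since $|\sin t|\le |t|\wedge 1$), which gives $|\sin(ux_1)h(x)|\le u^\eta |x|^\eta |h(x)|\le K u^\eta \nnnorm{x}^{-(b-\eta)}$. Thus the function $x\mapsto\sin(ux_1)h(x)$ obeys a pointwise bound of the type required by Lemma~\ref{lem:FTbdd}, with decay exponent $b-\eta$ and with constant $Ku^\eta$.

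For part (i), the hypothesis $b>d+\eta$ means $b-\eta>d$, so Lemma~\ref{lem:FTbdd}(i) applied to $x\mapsto\sin(ux_1)h(x)$ gives that its Fourier transform is in $L^\infty(\T^d)$ with norm at most $c_{d,b-\eta}Ku^\eta$; since the Fourier transform of this function is $\tfrac{1}{2i}U_u\hat h$ by \eqref{eq:UhFourier}, we obtain $\norm{U_u\hat h}_\infty\le c_{b,\eta}u^\eta K$, as claimed. (The claim $\hat h\in L^\infty(\T^d)$ is immediate from Lemma~\ref{lem:FTbdd}(i) applied to $h$ itself, since $b>d+\eta\ge d$.) For part (ii), the hypothesis $\tfrac12 d+\eta<b\le d+\eta$ translates to $\tfrac12 d< b-\eta\le d$, which is exactly the range where Lemma~\ref{lem:FTbdd}(ii) applies: the Fourier transform of $x\mapsto\sin(ux_1)h(x)$ lies in $L^q(\T^d)$ with norm at most $c_{d,b-\eta,q}Ku^\eta$ for all $1\le q<\tfrac{d}{d-(b-\eta)}=\tfrac{d}{d-b+\eta}$. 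Again invoking \eqref{eq:UhFourier} converts this into the stated bound on $\norm{U_u\hat h}_q$, and the assertion $\hat h\in L^q(\T^d)$ follows from Lemma~\ref{lem:FTbdd}(ii) applied to $h$ (noting $b>\tfrac12 d+\eta\ge\tfrac12 d$, so $h\in\ell^2$, and $b-\eta$ may be $\le d$ or $>d$, but in either subcase $\hat h\in L^q$ for the relevant $q$).

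The proof is essentially bookkeeping: the only genuine input beyond Lemma~\ref{lem:FTbdd} is the bound $|\sin t|\le|t|^\eta$ and the identity \eqref{eq:UhFourier}. The one point that requires a little care---and which I expect to be the main (minor) obstacle---is the interaction of the $u$-dependence with the edge case $u$ near the endpoints: when $\eta=1$ one should note $|\sin(ux_1)|\le u|x_1|$ directly, and when $u$ is bounded (here $0\le u\le 1$) there is no issue with $\nnnorm{x}^{-(b-\eta)}$ versus $|x|^{-(b-\eta)}$ near $x=0$, since $|\sin(ux_1)|=0$ at $x_1=0$ and is $O(u)$ uniformly, so the summand is controlled by $Ku^\eta\nnnorm{x}^{-(b-\eta)}$ on all of $\Z^d$. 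A final remark: one does not need to separately track the behaviour at $u=0$, where both sides vanish. With these observations the proof is a direct reduction to Lemma~\ref{lem:FTbdd}.
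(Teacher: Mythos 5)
Your proof is correct and follows essentially the same route as the paper: the identity $U_u\hat h = \mathcal{F}[2i\sin(ux_1)h(x)]$, the bound $|\sin t|\le|t|^\eta$, and then the $\ell^1$/Hausdorff--Young estimates, which you package via Lemma~\ref{lem:FTbdd} while the paper reruns them directly. This is the same argument in substance, and your handling of the $\hat h\in L^q$ claim (applying Lemma~\ref{lem:FTbdd} with exponent $b-\eta$) is also how the paper treats it.
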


\begin{proof}
(i) The claim that $\hat h \in L^\infty(\T^d)$ follows from the summability of $\nnnorm x^{-b}$.
For the claim on $U_u \hat h$,
we first observe that $U_u \hat h$ is the Fourier transform
of $2i \sin(ux_1) h(x)$, by definition.
With $\abs{ \sin (ux_1) } \lesssim \abs {ux_1}^\eta \le u^\eta \abs x^\eta$ and the summability of $\nnnorm x^{\eta - b }$, this leads to
\begin{align}
\label{eq:UFT_small}
\norm{ U_u \hat h }_\infty
\le \norm{ 2i \sin(ux_1) h(x) }_1
\lesssim  u^\eta \norm{ \abs x^{ \eta} h(x) }_1
\lesssim u^\eta K.
\end{align}

\smallskip\noindent
(ii)
Note that $ \abs x^\eta \abs{ h(x) } \le K \nnnorm x^{\eta - b}$ is in $\ell^p(\Z^d)$ for $p > d/(b-\eta)$.
The condition $b > \half d + \eta$ ensures that we can take $p$ less than $2$.
Using the same bound on $\sin(ux_1)$ as in part (i), and
again using the
boundedness of the Fourier transform as a map from $\ell^p(\Z^d)$ to $L^q(\T^d)$ when $p \in [1,2]$ and $p \inv + q \inv = 1$, we get
\begin{align}
\label{eq:UFT_large}
\norm{ U_u \hat h }_q
\le \norm{ 2i \sin(ux_1) h(x) }_p
\lesssim  u^\eta \norm{ \abs x^{ \eta} h(x) }_p
\lesssim u^\eta K
\end{align}
for $2 \le q<\frac{d}{d-b+\eta}$.
The extension to $1 \le q<\frac{d}{d-b+\eta}$ follows from the monotonicity of the $L^q(\T^d)$ norm in $q$.
The claim on $\hat h$ is similar and does not use $\sin(ux_1)$.
\end{proof}

\begin{proof}[Proof of Lemma~\ref{lemma:Uterms}]
Let $0 \le \eta \le 1$, $|\gamma|+\eta < (d-2+\rho \wedge 2) \wedge
(\frac 12 d + 2+\rho)$, $\sigma \in (0,\rho)$, and $\sigma \le 2$; these conditions
hold throughout the proof.
We start by proving \eqref{eq:Uterms} for $\hat F$. The proof for $\hat A$ is analogous so we omit it.

\medskip\noindent \emph{Bound on $U_u (\hat F_\gamma / \hat F) $.}
The statement is trivial for $\abs \gamma =0$, as $U_u(1) = 0$.
We treat the cases $\abs \gamma =1$ and $\abs \gamma \ge 2$ separately.

\smallskip \noindent \emph{Case 1: $\abs { \gamma } = 1$.}
Let $i, j  \in \{ 1, \dots, d \}$ and write $\gamma = e_i$.
We use Lemma~\ref{lemma:h_diff}.
It follows from the quotient rule, the infrared bound, $\grad_{ij} \hat F \in L^\infty$ and $\abs{ \grad_i \hat F (k) }, \abs{ \grad_j \hat F (k) } \lesssim \abs k$
(which follow from $|x|^2 F(x) \in \ell^1$), that
\begin{align}
\Bigabs{ \grad_j ( \frac{ \hat F_\gamma } { \hat F } ) }
= \Bigabs{ \frac{ \grad_{ij} \hat F } { \hat F } - \frac{ \grad_i \hat F \grad_j \hat F } { \hat F^2 } }
\lesssim \frac 1 { \abs k^{2} },
\end{align}
which is in $L^p$ for $p\inv > 2/d$.
Since $d>2$, $p=1$ is permitted.
This implies that $\hat F_\gamma / \hat F$ is weakly differentiable
and $\grad_j ( \hat F_\gamma / \hat F ) \in L^p$ for the same range of $p$.
Lemma~\ref{lemma:h_diff} then gives
$\norm{ U_u (  \hat F_\gamma / \hat F  ) }_q	\lesssim u^\eta$ for \begin{align} \label{eq:F:q-p_eta}
\frac 1 q = \frac 1 {p_\eta} = \frac 1 p - \frac{ 1-\eta } d > \frac{ 1 + \eta } d,
\end{align}
which is the desired result.

\smallskip \noindent \emph{Case 2: $\abs \gamma \ge 2$.}
By \eqref{eq:X-Y}, \begin{align} \label{eq:UFF_decomp}
U_u ( \frac{ \hat F _ \gamma } { \hat F }  )(k)
= (U_u \hat F_\gamma)(k) \hat F\inv(k+\uvec) + \hat F_\gamma (k - \uvec) U_u \hat F\inv (k).
\end{align}
We show each of the two terms on the right-hand side has $L^q$ norm
at most $u^\eta$ when $q\inv > (\abs \gamma + \eta)/d$.

For the first term of \eqref{eq:UFF_decomp},
we first estimate $U_u \hat F_\gamma$ by applying Lemma~\ref{lem:UFTbdd}
with $h(x) = x^\gamma F(x)$ and
\begin{align} \label{eq:UFF_pf}
b = - \abs \gamma  + d + 2 + \rho .
\end{align}
If $\abs \gamma + \eta < 2 + \rho$ then $b > d + \eta$, and Lemma~\ref{lem:UFTbdd}(i) gives $\norm{ U_u \hat F_\gamma }_\infty \lesssim u^\eta$.
On the other hand, if $2 + \rho \le \abs \gamma + \eta < \half d + 2 + \rho$ then $b \in ( \half d + \eta, d + \eta ]$, and Lemma~\ref{lem:UFTbdd}(ii) gives $\norm{ U_u \hat F_\gamma }_r \lesssim u^\eta$ for $r\inv > (\abs \gamma + \eta - 2 - \rho) / d$.
Since $\rho > 0$, we can combine the two subcases to get
\begin{align}
\norm{ U_u \hat F_{\gamma} }_{\frac d {\abs{\gamma}+\eta - 2} } \lesssim u^\eta
	\qquad ( 2 \le \abs \gamma < \half d + 2 + \rho - \eta ).
\end{align}
Then, by the infrared bound, it follows from H\"older's inequality that
\begin{equation}
\Bignorm{  (U_u \hat F_\gamma)(k) \hat F\inv(k+\uvec)  }_{q}
\lesssim  \norm{ U_u \hat F_{\gamma}  }_{\frac{d}{\abs \gamma + \eta - 2}} \Bignorm{ \frac{1}{|k+ \uvec|^2} }_p
\lesssim u^\eta ,
\end{equation}
provided that $q^{-1} = p^{-1}+  \frac{|\gamma| + \eta -2}{d} > \frac 2d +\frac{|\gamma| + \eta -2}{d}
= \frac{|\gamma| + \eta}{d}$, as desired.

For the second term of \eqref{eq:UFF_decomp},
we recall $\hat F_\gamma \in L^{ d / (\abs \gamma - 2) }$ from \eqref{eq:F_gamma}. To calculate $U_u \hat F\inv$, we use the symmetry of $F(x)$ to write \begin{align}
U_u \hat F(k)
&= 2i \sum_{x\in \Z^d} \sin(ux_1) F(x) e^{ik\cdot x}
= 2i^2 \sum_{x\in \Z^d} \sin(ux_1) F(x) e^{i\tilde k\cdot \tilde x} \sin(k_1x_1),
\end{align}
where $\tilde x = (x_2, \dots, x_d)$ and $\tilde k = (k_2, \dots, k_d)$.
Since $\abs{ \sin t } \le \abs t$,
\begin{align}
\abs{ U_u\hat F(k) }
&\le 2 \sum_{x\in \Z^d} u  \abs {x_1} \abs{ F(x) } \abs{ k_1 x_1}
\lesssim u \abs k.
\end{align}
By the inequalities $\abs k, u \le 2\inv (\abs{ k + \uvec } + \abs { k - \uvec })$ and $ (\abs a + \abs b)^p \le 2^p( \abs a^p + \abs b^p)$ with $p = 2-\eta\ge 0$,
\begin{align} \label{eq:UF0}
\abs{ U_u\hat F(k) }
\lesssim u \abs k
\le u^\eta   ( \abs{ k+\uvec }^{2-\eta} + \abs { k- \uvec }^{2-\eta} ),
\end{align}
and hence
\begin{align}
\label{eq:UF_inv}
\abs{ (U_u \hat F \inv )(k) }
&= \left \lvert \frac{-U_u \hat F(k) }{ \hat F(k+\uvec) \hat F(k-\uvec)  } \right \rvert
\lesssim u^\eta  \(  \frac 1 { \abs{k + \uvec}^{\eta} \abs{k - \uvec}^2 }
+  \frac 1 { \abs{k + \uvec}^2 \abs{k - \uvec}^{\eta} } \).
\end{align}
Then, it follows from H\"older's inequality that
\begin{align}
\Bignorm{ \hat F_\gamma (k - \uvec) U_u \hat F\inv (k) }_q
\lesssim u^\eta \norm{ \hat F_ \gamma }_{ \frac{d}{\abs \gamma - 2} }
	\Bignorm{ \frac1 { \abs { k \pm \uvec  }^ \eta } }_{r_1}
	\Bignorm{ \frac1 { \abs { k \pm \uvec  }^ 2 } }_{r_2} ,
\end{align}
provided that $q \inv = \frac { \abs \gamma -2 } d + r_1 \inv + r_2 \inv
>  \frac { \abs \gamma -2 } d + \frac \eta d + \frac 2 d
= \frac{ \abs \gamma + \eta } d$, as desired.

\medskip\noindent \emph{Bound on $U_u(\hat E_\gamma / (\hat A \hat F))$.}
We consider separately the two cases $\abs \gamma < \sigma$ and $\abs \gamma \ge \sigma$.

\smallskip \noindent \emph{Case 1: $\abs { \gamma } < \sigma$.}
By the quotient rule, \begin{align} \label{eq:DEAF}
\grad_j( \frac{ \hat E_{\gamma} }{ \hat A \hat F} )
= \frac{ \hat E_{\gamma + e_j } }{ \hat A \hat F} - \frac{ \hat E_{\gamma} }{ \hat A \hat F} \frac{ \hat A_{e_j} } {\hat A} - \frac{ \hat E_{\gamma} }{ \hat A \hat F} \frac{ \hat F_{e_j} } {\hat F},
\end{align}
which is in $L^p$ when $p\inv > ( 3 - \sigma + \abs {\gamma}  )/d$ by Lemma~\ref{lem:AFEbds} and H\"older's inequality. Since $\abs \gamma < \sigma$ and $d\ge 3$, $p=1$ is permitted.
This implies that $\hat E_\gamma / (\hat A \hat F)$ is weakly differentiable
and $\grad_j ( \hat E_\gamma / (\hat A \hat F) ) \in L^p$ for the same range of $p$.
Lemma~\ref{lemma:h_diff} then gives $\norm{ U_u (\hat E_{ \gamma } / (\hat A \hat F) ) }_q \lesssim u^\eta$ for \begin{align}
\frac 1 q = \frac 1 {p_\eta} = \frac 1 p - \frac{ 1-\eta } d > \frac{ 2 - \sigma + \abs {\gamma} +\eta} d,
\end{align}
which is the desired result.

\smallskip \noindent \emph{Case 2: $\abs { \gamma } \ge \sigma$.}
By \eqref{eq:X-Y},
\begin{align} \label{eq:UEAF_decomp}
U_u (\frac{ \hat E_{\gamma} }{ \hat A \hat F})(k) 	
&= \frac{ \hat E_{\gamma} }{ \hat A }(k + \uvec) U_u \hat F \inv (k)
	+ \frac { U_u \hat E_{\gamma }(k)  } { \hat A (k + \uvec) \hat F (k - \uvec)}
	+ U_u \hat A \inv (k) \frac{ \hat E_{\gamma} }{ \hat F }(k - \uvec).
\end{align}
We show all three terms on the right-hand side have $L^q$ norm $u^\eta$ when $q\inv > (2 - \sigma + \abs \gamma+ \eta)/d$. For the first term, we have from \eqref{eq:UF_inv} that
$\norm{ U_u \hat F \inv }_r \lesssim u^\eta$ when $r\inv > (2+\eta)/d$.
And, since $\abs \gamma \ge \sigma$, from \eqref{eq:EAEF} we have $\hat E_{\gamma} / \hat A \in L^p$ for $p\inv > ( \abs \gamma - \sigma )/d$.
H\"older's inequality then gives $\norm{ \frac{ \hat E_{\gamma} }{ \hat A }(k + \uvec) U_u \hat F \inv (k)  }_q \lesssim u^\eta $ for $q \inv = p \inv + r \inv > (2 - \sigma + \abs \gamma + \eta)/d$, as desired.
The last term of \eqref{eq:UEAF_decomp} is analogous.

For the  middle term of \eqref{eq:UEAF_decomp}, we further divide into two subcases
according to how $\abs \gamma + \eta$ compares with $2 + \sigma$.
If $2 + \sigma \le \abs \gamma + \eta < \half d + 2 +\rho$,
an application of Lemma~\ref{lem:UFTbdd}
with $h(x) = x^\gamma E(x)$ and
\begin{align}
b = - \abs \gamma + d + 2 + \rho
\end{align}
gives, as in the paragraph containing \eqref{eq:UFF_pf},
$\norm{ U_u \hat E_\gamma }_\infty \lesssim u^\eta$ when $\abs \gamma + \eta < 2 + \rho$,
and $\norm{ U_u \hat E_\gamma }_r \lesssim u^\eta $ for $r \inv > (\abs \gamma + \eta - 2 - \rho)/ d$ when $2 + \rho \le \abs \gamma + \eta < \half d + 2 + \rho$.
We use $\sigma < \rho$ to combine the two as
\begin{align} \label{eq:UE}
\norm{ U_u \hat E_{\gamma} }_r \lesssim u^\eta 	\qquad ( r \inv > \frac{ \abs \gamma + \eta - 2 - \sigma} d ).
\end{align}
The infrared bounds and H\"older's inequality then yield
\begin{align}
\Bignorm{ \frac { U_u \hat E_{\gamma }(k)  } { \hat A (k + \uvec) \hat F (k - \uvec)}  }_q
\lesssim \norm{ U_u \hat E_{\gamma } }_r
	\Bignorm{ \frac1 { \abs { k + \uvec  }^ 2 } }_{p_1}
	\Bignorm{ \frac1 { \abs { k + \uvec  }^ 2 } }_{p_2}
\lesssim u^\eta ,
\end{align}
provided that $q\inv = r\inv + p_1 \inv + p_2 \inv > \frac{ \abs \gamma + \eta - 2 - \sigma} d + \frac 2 d + \frac 2 d
= \frac {2 - \sigma + \abs \gamma+ \eta } d$.
If instead $\abs \gamma + \eta < 2 + \sigma$ then we apply Lemma~\ref{lemma:UE} to obtain
\begin{align}
\abs{  U_u\hat E_{\gamma}(k) }
\lesssim  u^\eta ( \abs{ k+\uvec }^{2 + \sigma - \eta - \abs{\gamma} } + \abs{ k-\uvec }^{2 + \sigma - \eta - \abs{\gamma} } ).
\end{align}
With the infrared bounds, this gives
\begin{align}
\Bigabs { \frac { U_u \hat E_{\gamma }(k)  } { \hat A (k + \uvec) \hat F (k - \uvec)}  }
\lesssim
u^\eta \( \frac 1 { \abs{k + \uvec }^{\abs \gamma - \sigma + \eta} \abs{ k - \uvec}^2 } + \frac 1 { \abs{k + \uvec }^{2} \abs{ k - \uvec}^{\abs \gamma - \sigma + \eta} } \).
\end{align}
Since $\abs \gamma \ge \sigma$, splitting $\abs{ k \pm \uvec}$ with
H\"older's inequality directly gives $\norm{ \frac { U_u \hat E_{\gamma }(k)  } { \hat A (k + \uvec) \hat F (k - \uvec)}  }_q \lesssim u^\eta$ for $q \inv > (2 - \sigma + \abs \gamma + \eta)/d$.

This completes the proof of Lemma~\ref{lemma:Uterms}.
\end{proof}

\begin{appendix}

\section{Weak derivatives}
\label{appendix:weak}

We summarise here the properties of weak derivatives that we need.
More can be found in \cite[Chapter~5]{Evan10}, \cite[Chapter~4]{EG15}, or \cite[Chapter~1]{Mazy11}.
We restrict our discussion to functions on the torus $\T^d$, which simplifies matters
because $\T^d$ is compact and has no boundary.\footnote{The weak derivative can be defined for functions on an open subset $\Omega \subset \R^d$ that are merely locally integrable,
using the space of infinitely differentiable \emph{compactly supported} functions as test functions.}
Let $C^\infty (\T^d)$ denote the space of infinitely differentiable \emph{test functions}
$\phi: \T^d\to \R$.
Since the torus is compact, these test functions have compact support.
 All integrals in this appendix are with respect to the
normalised Lebesgue measure $dk/(2\pi)^d$ on $\T^d$.

\begin{definition}
Let $n$ be a positive integer and let $u \in L^1(\T^d)$.
We say that $u$ is $n$ times weakly differentiable if for
each multi-index $\alpha$ with $|\alpha| \le n$ there is a function $v_\alpha \in
 L^1(\T^d)$ such that for all test functions $\phi \in C^\infty (\T^d)$,
\begin{align} \label{eq:IBP}
\int_{\T^d} u \grad^\alpha \phi= (-1)^{\abs \alpha} \int_{\T^d} v_\alpha \phi.
\end{align}
In this case, we call $v_\alpha$ the $\alpha^{\rm th}$ weak derivative of $u$ and
write $\grad^\alpha u = v_\alpha$.
\end{definition}

If $u$ is classically differentiable then \eqref{eq:IBP} holds by
the usual integration by parts formula,
so weak derivatives are consistent with classical derivatives. The weak derivative is unique and satisfies standard calculus rules; see, \eg, \cite[Sec. 5.2.3, Theorem~1]{Evan10} or \cite[Theorem~4.4]{EG15}.
We make heavy use of the product and quotient rules, in a form that slightly generalises
\cite[Theorem~4.4]{EG15}, so we give the proofs here.

\begin{lemma}[Product rule] \label{lemma:product_rule}
Let $j \in \{ 1, \dots, d \}$. Suppose
that $f, g$ are weakly differentiable, and that $f, \grad_j f \in L^p$ and $g, \grad_j g \in L^q$ with
$\frac 1 p + \frac 1 q \le 1$, for all $j$.
Then $fg$ is weakly differentiable, with
\begin{align}
\grad_j (fg) = (\grad_j f) g + f (\grad_j g).
\end{align}
\end{lemma}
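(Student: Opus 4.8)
The plan is to reduce to the classical Leibniz rule by mollification; the only ingredient beyond \cite[Theorem~4.4]{EG15} is a use of H\"older's inequality to accommodate the exponent condition $\frac1p+\frac1q\le1$. First I would observe that this condition is exactly what makes the statement meaningful: on the finite-measure space $\T^d$, H\"older's inequality gives $fg\in L^r(\T^d)$ and $(\grad_jf)g+f(\grad_jg)\in L^r(\T^d)$ with $\frac1r=\frac1p+\frac1q\le1$, hence both lie in $L^1(\T^d)$, so the claimed identity is an admissible candidate for $\grad_j(fg)$ in the sense of the definition (applied with $n=1$ to each coordinate direction $j$).

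Next, fix a mollifier $\eta\in C_c^\infty(\R^d)$ with $\int\eta=1$ and set $\eta_\eps(y)=\eps^{-d}\eta(y/\eps)$. Extending $f,g$ periodically and convolving on $\R^d$ produces $f_\eps=\eta_\eps*f$ and $g_\eps=\eta_\eps*g$ in $C^\infty(\T^d)$. I would use two standard properties of mollification on the torus: $f_\eps\to f$ in $L^p$ and $g_\eps\to g$ in $L^q$; and mollification commutes with the weak derivative, $\grad_jf_\eps=\eta_\eps*\grad_jf$ (immediate from \eqref{eq:IBP} with test function $y\mapsto\eta_\eps(x-y)$), whence also $\grad_jf_\eps\to\grad_jf$ in $L^p$ and $\grad_jg_\eps\to\grad_jg$ in $L^q$. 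For the smooth functions $f_\eps,g_\eps$ the classical product rule holds pointwise, so integration by parts on $\T^d$ gives, for every test function $\phi\in C^\infty(\T^d)$ and every $j$,
\[
\int_{\T^d} f_\eps g_\eps\,\grad_j\phi
= -\int_{\T^d}\big[(\grad_jf_\eps)g_\eps+f_\eps(\grad_jg_\eps)\big]\phi .
\]

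Finally I would pass to the limit $\eps\to0$. Writing $f_\eps g_\eps-fg=(f_\eps-f)g_\eps+f(g_\eps-g)$ and using H\"older together with the boundedness of $\norm{g_\eps}_q$ shows $f_\eps g_\eps\to fg$ in $L^r$, hence in $L^1(\T^d)$; the same estimate applied to $(\grad_jf_\eps)g_\eps$ and $f_\eps(\grad_jg_\eps)$ shows the right-hand integrand converges in $L^1(\T^d)$ as well. Since $\phi,\grad_j\phi\in L^\infty(\T^d)$, all three integrals pass to the limit, and the displayed identity becomes $\int_{\T^d}fg\,\grad_j\phi=-\int_{\T^d}[(\grad_jf)g+f(\grad_jg)]\phi$ for all test $\phi$ and all $j$, which is precisely the assertion. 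I expect no genuine obstacle here: the only points requiring care are the H\"older bookkeeping that keeps every product in $L^1(\T^d)$---in particular the role of the hypothesis $\frac1p+\frac1q\le1$, which is where this statement goes slightly beyond \cite[Theorem~4.4]{EG15}---and the (routine) verification that mollification commutes with weak differentiation.
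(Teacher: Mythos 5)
Your proposal is correct and follows essentially the same route as the paper's proof: mollify $f$ and $g$, apply the classical product rule and integration by parts to the smooth approximations, and pass to the limit using H\"older's inequality (with $\tfrac1p+\tfrac1q\le1$ guaranteeing all products lie in $L^1(\T^d)$). The extra details you supply---the splitting $f_\eps g_\eps-fg=(f_\eps-f)g_\eps+f(g_\eps-g)$ and the observation that mollification commutes with weak differentiation---are exactly the routine verifications the paper leaves implicit.
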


\begin{proof}
By hypothesis and H\"older's inequality, all of $fg$, $(\grad_j f)g$, and $f(\grad_j g)$ are in $L^1$.
By convolving with a mollifier as in \cite[Section~4.2.1]{EG15}, we take smooth approximations
$f^\eps, g^\eps \in C^\infty$ such that $ f^\eps \to f $ and $\grad_j f^\eps \to \grad_j f$ in $L^p$, and $ g^\eps \to g $ and $\grad_j g^\eps \to \grad_j g$ in $L^q$.
Then, for $\phi\in C^\infty(\T^d)$,
\begin{align}
\int_{\T^d} fg \grad_j \phi
= \lim_{\eps \to 0} \int_{\T^d} f^\eps g^\eps \grad_j \phi
&= - \lim_{\eps \to 0} \int_{\T^d} [(\grad_j f^\eps) g^\eps + f^\eps (\grad_j g^\eps) ] \phi
= -  \int_{\T^d} [(\grad_j f) g + f (\grad_j g) ] \phi,
\end{align}
where the second equality uses classical integration by parts. This verifies the definition of the weak derivative in \eqref{eq:IBP} and completes the proof.
\end{proof}

For the proof of the quotient rule, we use the ACL (absolutely continuous on lines) characterisation of weak derivative (see \cite[Section~1.1.3]{Mazy11} or \cite[Theorem~4.21]{EG15}), which is as follows.
A function $u\in L^1(\T^d)$ is weakly differentiable if and only if: (i)
the restriction of $u$ to almost every line in the coordinate directions is absolutely continuous and (ii) the a.e.-defined classical derivatives are integrable over $\T^d$.
In this case, the weak derivatives coincide with the classical derivatives.

\begin{lemma}[Quotient rule] \label{lemma:quotient_rule} 
Let $j \in \{ 1, \dots, d \}$.
Suppose that $f, g$ are weakly differentiable, that $g$ has countably many zeros, that $f/g \in L^1$, and that $[ (\grad_j f)g - f (\grad_j g)]/g^2 \in L^1$ for all $j$.
Then $f/g$ is weakly differentiable, with
\begin{align} \label{eq:quotient}
\grad_j \( \frac f g \) = \frac{ (\grad_j f)g - f (\grad_j g) }{g^2}.
\end{align}
\end{lemma}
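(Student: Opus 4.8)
The plan is to establish weak differentiability of $f/g$ via the ACL (absolutely continuous on lines) characterisation recalled just above, by exhibiting a representative of $f/g$ that is absolutely continuous on almost every coordinate line and whose a.e.\ classical partial derivative equals the asserted $v_j := \frac{(\grad_j f)g - f(\grad_j g)}{g^2}$, which by hypothesis lies in $L^1(\T^d)$.

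First I would fix a coordinate direction $e_j$ and use ACL for $f$ and $g$: choose representatives $\tilde f, \tilde g$ that are absolutely continuous on almost every line $\ell$ parallel to $e_j$, with a.e.\ classical derivatives $\grad_j f$ and $\grad_j g$. Fubini's Theorem applied to the hypotheses $f/g \in L^1(\T^d)$ and $v_j \in L^1(\T^d)$ shows that, for a.e.\ such $\ell$, both $(f/g)|_\ell$ and $v_j|_\ell$ lie in $L^1(\ell)$. Since $g$ has only countably many zeros on $\T^d$, the zero set of $\tilde g$ meets each $\ell$ in an at most countable, hence one-dimensional Lebesgue-null, set $Z_\ell$, and $\ell \setminus Z_\ell$ is a countable union of open arcs on each of which the continuous function $\tilde g$ is nonvanishing; the classical quotient rule then shows $\tilde f/\tilde g$ is absolutely continuous on compact subarcs there, with classical derivative equal to $v_j|_\ell$ a.e.

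The crux is to upgrade this to absolute continuity of a representative of $f/g$ on all of $\ell$, \ie\ to rule out jumps of $\tilde f/\tilde g$ across the points of $Z_\ell$. At a zero $t_0 \in Z_\ell$, continuity of $\tilde g$ and integrability of $(f/g)|_\ell$ near $t_0$ forbid a non-integrable blow-up, and then the one-sided limits of $\tilde f/\tilde g$ at $t_0$ exist and coincide — this is where one uses that $g$ does not reverse sign at $t_0$, which holds in all our applications, where $g$ is $\hat F$ in the critical case or $\hat A \hat F$ and the infrared bound \eqref{eq:IR-SRW} forces $g \ge 0$. Thus $\tilde f/\tilde g$ extends continuously across each point of $Z_\ell$; being continuous on $\ell$, locally absolutely continuous off the null set $Z_\ell$, and having a.e.\ derivative $v_j|_\ell \in L^1(\ell)$, it is absolutely continuous on all of $\ell$ (one verifies the defining estimate by exhausting $Z_\ell$ by finite subsets and using $\int_\ell |v_j| < \infty$). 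Running this over a.e.\ line in each coordinate direction, the ACL criterion yields that $f/g$ is weakly differentiable with $\grad_j(f/g) = v_j$ for all $j$, which is the claim.

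I expect the zero-crossing step of the previous paragraph to be the main obstacle: the quotient formula on $\{g \ne 0\}$ is routine, but patching across $Z_\ell$ genuinely requires controlling $f/g$ at the zeros of $g$ from the integrability hypotheses, since otherwise the distributional derivative of $f/g$ could acquire atoms on $Z_\ell$. An alternative to the ACL route would be to approximate $1/g$ by bounded smooth truncations $\psi_\delta \circ g$ (with $\psi_\delta(t) = 1/t$ for $|t| \ge \delta$ and $\psi_\delta$ globally Lipschitz), apply the product rule (Lemma~\ref{lemma:product_rule}) to $f\,(\psi_\delta \circ g)$, and let $\delta \downarrow 0$ using the $L^1$ bounds; this needs slightly more integrability of $f$ than $L^1$ to make the product rule applicable, which is again available in the applications.
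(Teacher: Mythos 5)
You follow the same ACL route as the paper, but you miss the one observation that makes the hypothesis that $g$ has countably many zeros do its work: a countable subset of $\T^d$ projects to a countable, hence Lebesgue-null, subset of the $(d-1)$-dimensional family parametrising the lines in the direction $e_j$, so \emph{almost every} coordinate line misses the zero set of $g$ entirely. On such a line $g$ is absolutely continuous and nonvanishing, hence bounded away from zero by compactness of the line, and the classical quotient rule gives absolute continuity of $f/g$ on the whole line with classical derivative equal to the right-hand side of \eqref{eq:quotient}, which is integrable by hypothesis; the ACL characterisation then concludes. No patching across zeros is required, because the lines that do meet the zero set form a null family which the ACL criterion ignores.

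The step you single out as the crux---extending $\tilde f/\tilde g$ continuously across a point of $Z_\ell$---is where your argument genuinely breaks. First, it invokes information ($g\ge 0$, the infrared bound) that belongs to the applications and is not among the hypotheses of Lemma~\ref{lemma:quotient_rule}, so at best you would prove a weaker statement. Second, even granting $g\ge 0$, the claim that the two one-sided limits coincide does not follow from the stated integrability: on a line take $g(t)=t^2$ and $f(t)=t\lvert t\rvert$ near the zero $t_0=0$; both are weakly differentiable there, $f/g=\mathrm{sgn}(t)$ is bounded, and $[(\grad_j f)g-f(\grad_j g)]/g^2=0$ a.e., yet $f/g$ jumps at $0$ and its distributional derivative acquires an atom. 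Thus integrability of $f/g$ and of the candidate derivative cannot rule out a jump at a zero of $g$, and your patching argument cannot be repaired under the lemma's hypotheses; the lemma is true only because, with countably many zeros and $d\ge 2$, the lines through zeros are negligible---which is exactly the paper's (one-line) proof. Your alternative truncation route requires extra integrability of $f$, as you note, so it too does not prove the lemma as stated.
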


\begin{proof}
Since $f$ and $g$ are weakly differentiable, we know from the ACL characterisation that
$f$ and $g$ are absolutely continuous on almost every line in the coordinate directions. Since $g$ has countably many zeros only, we can assume that $g$ is nonzero on these lines.
Moreover, since $\T^d$ is compact, $g$ is in fact bounded away from $0$
on these lines, so that $f/g$ is absolutely continuous on the lines.
By the classical quotient rule, $f/g$ has classical partial derivatives almost everywhere, given by the right-hand side of \eqref{eq:quotient}, which are integrable by assumption.
This verifies the ACL characterisation for $f/g$ and concludes the proof.
\end{proof}

The next lemma shows that weak derivatives preserve the classical fact that
multiplication by a power corresponds to differentiation of the Fourier transform,
a fact we rely on heavily.
We write $\Fcal [f]   =\hat f$ for the $L^2$ Fourier transform of $f \in \ell^2(\Z^d)$.

\begin{lemma} \label{lemma:weak_Fourier}
Let $f: \Z^d \to \R$ and let $n$ be a positive integer.
Suppose that $\abs{x}^n f(x) \in \ell^2(\Z^d)$.
Then $\hat f$ is $n$ times weakly differentiable, and
the $\alpha^{\rm th}$ weak derivative of $\hat f$ is given by
\begin{align}
\grad^\alpha \hat f = \Fcal [  (ix)^\alpha f(x)  ]
	\qquad (\abs \alpha \le n) .
\end{align}
\end{lemma}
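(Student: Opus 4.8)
The plan is to prove the statement by induction on $|\alpha|$, reducing in each step to a single partial derivative in one coordinate direction. The base case $|\alpha|=0$ is trivial. For the inductive step, it suffices to show: if $g:\Z^d\to\R$ satisfies $|x|\,|g(x)|\in\ell^2(\Z^d)$, then $\hat g$ is weakly differentiable and $\grad_j\hat g=\Fcal[ix_jg(x)]$ for each $j$; the full statement then follows by applying this repeatedly, since at each stage the hypothesis $|x|^n f(x)\in\ell^2$ guarantees that the relevant intermediate function $(ix)^\beta f(x)$ (with $|\beta|<n$) still has $|x|\cdot|(ix)^\beta f(x)|\in\ell^2$, so the next derivative may be taken.

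First I would verify that both $\hat g$ and $h_j:=\Fcal[ix_jg(x)]$ lie in $L^2(\T^d)\subset L^1(\T^d)$: this is immediate from the hypotheses $g\in\ell^2$ and $x_jg(x)\in\ell^2$ together with the Plancherel/Hausdorff--Young boundedness of the $L^2$ Fourier transform $\ell^2(\Z^d)\to L^2(\T^d)$, which is already invoked in the proof of Lemma~\ref{lem:FTbdd}. Then I would check the defining integration-by-parts identity \eqref{eq:IBP}: for any test function $\phi\in C^\infty(\T^d)$,
\begin{align}
\int_{\T^d}\hat g\,\grad_j\phi = -\int_{\T^d} h_j\,\phi.
\end{align}
To see this, expand $\grad_j\phi(k)=\sum_{x}(\grad_j\phi)\widehat{\ }(x)\,e^{-ik\cdot x}$ — equivalently, write $\phi$ itself via its (rapidly decaying, since $\phi$ is smooth) Fourier coefficients and differentiate termwise — and use the Parseval relation $\int_{\T^d}\hat g\,\overline{\hat\psi}=\sum_x g(x)\overline{\psi(x)}$ (valid for $g,\psi\in\ell^2$), noting that the Fourier coefficients of $\grad_j\phi$ are $-ix_j$ times those of $\phi$. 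Carrying the factor $-ix_j$ onto $g$ turns $g(x)$ into $ix_jg(x)$ (up to the sign), and applying Parseval in the other direction identifies the result as $-\int_{\T^d}h_j\phi$. Since $\phi$ is smooth its Fourier coefficients decay faster than any polynomial, so all interchanges of summation and integration are justified by absolute convergence against $g\in\ell^1$ (note $g\in\ell^2(\Z^d)$ on the lattice implies $g\in\ell^1$ only if there is extra decay, so one should instead keep everything in $\ell^2$/$L^2$ via Parseval rather than passing to $\ell^1$).

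The main obstacle I anticipate is purely a matter of bookkeeping rather than depth: making the termwise differentiation of the Fourier series of $\phi$ rigorous and justifying the exchange of the sum over $x\in\Z^d$ with the integral over $\T^d$, when the only quantitative control on $g$ is membership in $\ell^2$ rather than $\ell^1$. The clean way around this is to phrase the entire computation through the bilinear Parseval identity for $\ell^2(\Z^d)\times\ell^2(\Z^d)$, applied with the second slot being the rapidly decaying sequence of Fourier coefficients of $\phi$ (respectively of $\grad_j\phi$); then every pairing is of an $\ell^2$ sequence against a rapidly decreasing one, convergence is automatic, and no $\ell^1$ hypothesis on $g$ is needed. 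Once \eqref{eq:IBP} is established for $\grad_j$, the uniqueness of weak derivatives (recalled before Lemma~\ref{lemma:product_rule}) lets the induction proceed coordinate by coordinate to reach arbitrary multi-indices $\alpha$ with $|\alpha|\le n$, completing the proof.
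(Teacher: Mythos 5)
Your proposal is correct and follows essentially the same route as the paper: both verify the defining identity \eqref{eq:IBP} by pairing, via Parseval's relation, the $\ell^2(\Z^d)$ sequence $x^\alpha f$ (or $x_j g$) against the rapidly decaying Fourier coefficients of the smooth test function, thereby transferring the factor $(ix)^\alpha$ into a derivative of $\phi$. The only difference is organisational: the paper treats all multi-indices $\abs\alpha\le n$ in one direct computation, whereas you induct one coordinate at a time, which is a harmless variation.
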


\begin{proof}
Let $\abs \alpha \le n$ and
$\phi\in C^\infty(\T^d)$.
Since $x^\alpha f(x) \in \ell^2(\Z^d)$,
by Parseval's relation and integration by parts we have
\begin{align}
\int_{\T^d} \Fcal [ (ix)^\alpha f(x) ] \bar \phi
&= \sum_{x\in\Z^d} (ix)^\alpha f(x)  \overline{ \Fcal\inv[ \phi ] }	\nl
&= (-1)^{\abs \alpha} \sum_{x\in\Z^d}  f(x)  \overline{ (ix)^\alpha \Fcal\inv[ \phi ] } \nl
&= (-1)^{\abs \alpha} \sum_{x\in\Z^d}  f(x)  \overline{ \Fcal\inv[ \grad^\alpha \phi ] }
= (-1)^{\abs \alpha} \int_{\T^d} \hat f \, \overline{\grad^\alpha \phi}.
\end{align}
This verifies the definition of weak derivative~\eqref{eq:IBP}, because $\phi$ is real.
\end{proof}

\end{appendix}

\section*{Acknowledgements}
The work of both authors was supported in part by NSERC of Canada.
We thank Takashi Hara and Emmanuel Michta for comments on a preliminary version.
YL thanks Fanze Kong for discussions about weak derivatives.

%\bibliographystyle{plain}
%\bibliography{bib}

\end{document}